\documentclass[11pt]{article}
\usepackage{amsfonts}
\usepackage{mathrsfs}
\usepackage{bbm}
\usepackage{indentfirst}
\usepackage{amssymb,amsmath,amsthm,graphicx}
\usepackage{float}
\usepackage[colorlinks, linkcolor=red]{hyperref}
\usepackage{color, xcolor}
\usepackage{cite}
\usepackage{enumerate}
\usepackage{esint}
\usepackage[paper=a4paper, left=1.6cm, right=1.6cm, top=1.8cm, bottom=1.6cm, headheight=5.5pt, footskip=0.8cm, footnotesep=0.8cm, centering, includefoot]{geometry}

\hypersetup{urlcolor=red, citecolor=red}

\DeclareMathOperator{\divv}{div}
\DeclareMathOperator{\curl}{curl}
\DeclareMathOperator{\loc}{loc}
\DeclareMathOperator{\divf}
{di\overset{\raisebox{-0.25ex}{\kern0.1em$\mathbf{\cdot}$}}{v}}
\allowdisplaybreaks

\begin{document}
\title{Global axisymmetric solutions and incompressible limit for the 3D  isentropic compressible Navier--Stokes equations in annular cylinders with swirl and large initial data
\thanks{
Wu's research was partially supported by Fujian Alliance of Mathematics (No. 2023SXLMMS08) and the Scientific Research Funds of Xiamen University of Technology (No. YKJ25009R).
Zhong's research was partially supported by Fundamental Research Funds for the Central Universities (No. SWU--KU24001) and National Natural Science Foundation of China (No. 12371227).}
}

\author{Shuai Wang$\,^{\rm 1}\,$,\ Guochun Wu$\,^{\rm 2}\,$,\
Xin Zhong$\,^{\rm 1}\,$ {\thanks{E-mail addresses: swang238@163.com (S. Wang),
guochunwu@126.com (G. Wu), xzhong1014@amss.ac.cn (X. Zhong).}}\date{}\\
\footnotesize $^{\rm 1}\,$
School of Mathematics and Statistics, Southwest University, Chongqing 400715, P. R. China\\
\footnotesize $^{\rm 2}\,$ School of Mathematics and Statistics, Xiamen University of Technology, Xiamen 361024, P. R. China}

\maketitle
\newtheorem{theorem}{Theorem}[section]
\newtheorem{definition}{Definition}[section]
\newtheorem{lemma}{Lemma}[section]
\newtheorem{proposition}{Proposition}[section]
\newtheorem{corollary}{Corollary}[section]
\newtheorem{remark}{Remark}[section]
\renewcommand{\theequation}{\thesection.\arabic{equation}}
\catcode`@=11 \@addtoreset{equation}{section} \catcode`@=12
\maketitle{}
\begin{abstract}
We establish the global existence of weak solutions to the isentropic compressible Navier--Stokes equations in three-dimensional annular cylinders with Navier-slip boundary conditions, allowing large axisymmetric initial data and vacuum states, provided that the bulk viscosity is sufficiently large.
We identify a regime in which compressible and incompressible effects coexist.
The compressible component interacts with pressure and density to produce an effective dissipation mechanism, while the divergence-free component enjoys improved regularity. This shows that large bulk viscosity strongly suppresses the compressible effect, thereby relaxing restrictions on the size of the initial data. Moreover, such solutions converge globally in time to weak solutions of the inhomogeneous incompressible Navier--Stokes system as the bulk viscosity tends to infinity. The proof relies on a Desjardins-type logarithmic interpolation inequality and Friedrichs-type commutator estimates. Our results build upon the works of Hoff (Indiana Univ. Math. J. 41 (1992), pp. 1225--1302) and Danchin--Mucha (Comm. Pure Appl. Math. 76 (2023), pp. 3437--3492), and further develop Hoff-type time-weighted estimates uniform in the bulk viscosity in the presence of boundaries.
\end{abstract}

\textit{Key words and phrases}. Navier--Stokes equations; global axisymmetric  weak solutions; incompressible limit; slip boundary conditions; large initial data; vacuum.

2020 \textit{Mathematics Subject Classification}. 35Q30; 35A01; 35B40.
%%%%%%%%%%%%%%%%%%%%%%%%%%%%%%%%%%%%%%%%%%%%%%%%%%%%%%%%%%%%%%%%%%%%%%%%%%%%%%%%%%%%%%%%%%%%%%%%%%

\tableofcontents

\section{Introduction}
\subsection{Background and motivation}

We study the following isentropic compressible Navier--Stokes equations in
a domain $\Omega\subset\mathbb{R}^3$:
\begin{align}\label{a1}
\begin{cases}
\rho_t+\divv(\rho\mathbf{u})=0,\\
(\rho\mathbf{u})_t+\divv(\rho\mathbf{u}\otimes\mathbf{u})+\nabla P
=\divv \mathbb{S}(\mathbf{u})
\end{cases}
\end{align}
with given initial data
\begin{equation}
(\rho,\mathbf{u})|_{t=0}=(\rho_0,\mathbf{u}_0)(\mathbf{x}),\quad \mathbf{x}\in\Omega,
\end{equation}
and Navier-slip boundary conditions
\begin{equation}\label{a3}
\mathbf{u}\cdot \mathbf{n}=0,  \ \ \curl\mathbf{u}\times\mathbf{n}=\mathbf{0},
 \quad \mathbf{x}\in\partial\Omega,\ t>0.
\end{equation}
Here $t\ge0$ is time, $\mathbf{x}=(x_1,x_2,x_3)\in\Omega$ the spatial variable, and $\mathbf{n}=(n^1,n^2,n^3)$ the unit outward normal on $\partial\Omega$.
The unknowns $\rho$, $\mathbf{u}=(u^1,u^2,u^3)$, and $P=P(\rho)=a\rho^\gamma\ (a>0,\gamma>1)$ denote the fluid density, velocity, and pressure, respectively.
The viscous stress tensor $\mathbb{S}(\mathbf{u})$ takes the form
\begin{equation*}
\mathbb{S}(\mathbf{u})
=\mu\big(\nabla\mathbf{u}+(\nabla\mathbf{u})^{\top}\big)
+\lambda\divv\mathbf{u}\,\mathbb{I},
\end{equation*}
where $\mathbb I$ denotes the $3\times3$ identity matrix, and $\mu$ and $\lambda$ are the shear and bulk viscosity coefficients, respectively, satisfying the physical restrictions
\begin{equation*}
\mu>0,\quad 2\mu+3\lambda\geq0.
\end{equation*}

The classical Navier-slip boundary conditions consist of the impermeability condition $(\mathbf u\cdot\mathbf n)|_{\partial\Omega}=0$
together with a linear relation between the tangential velocity and the tangential viscous stress:
\begin{equation*}
\big((\nabla\mathbf u+(\nabla\mathbf u)^{\top})\mathbf n\big)_\tau
+\alpha\mathbf u_\tau=\mathbf{0}
\quad \text{on }\partial\Omega,
\end{equation*}
where $(\cdot)_\tau$ denotes the tangential projection and $\alpha\ge0$ is the friction coefficient. When $\alpha=0$, these conditions reduce to \eqref{a3}.
To incorporate the axial symmetry of the flow, we impose periodicity in the
axial direction, while retaining the radial boundaries as physical walls
on which the slip boundary conditions are prescribed.
Let $L>0$ and denote $\mathbb T_L\triangleq\mathbb R/(L\mathbb Z)$. We consider the
three-dimensional annular cylinder
\begin{equation}\label{a4}
\Omega=\Big\{(x_1,x_2,x_3)\in\mathbb R^3:\ 1<\sqrt{x_1^2+x_2^2}<2,\ x_3\in\mathbb T_L\Big\}.
\end{equation}

For $t\geq0$, observe that the solutions to \eqref{a1} obey the global energy law
\begin{align*}
\int_{\Omega}\bigg(\frac{1}{2}\rho |\mathbf{u}|^2+G(\rho)\bigg)\mathrm{d}\mathbf{x}
+\int_0^t\int_{\Omega}\big[(2\mu+\lambda)(\divv \mathbf{u})^2+\mu|\curl\mathbf{u}|^2\big]\mathrm{d}\mathbf{x}\mathrm{d}\tau
\leq C_0,
\end{align*}
with the initial total energy $C_0$ and the potential energy $G(\rho)$ being given by
\begin{equation}\label{1.5}
C_0\triangleq\int_{\Omega}
\bigg(\frac{1}{2}\rho_0|\mathbf{u}_0|^2+G(\rho_0)\bigg)\mathrm{d}\mathbf{x},\quad
G(\rho)\triangleq\rho\int_{\bar{\rho}}^\rho\frac{P(\xi)-P(\bar{\rho})}{\xi^2}\mathrm{d}\xi,
\end{equation}
where $\bar{\rho}$ denotes the spatial average of the density over $\Omega$:
\begin{equation*}
  \bar{\rho}\triangleq\frac{1}{|\Omega|}\int_\Omega \rho\mathrm{d}\mathbf{x}=\frac{1}{|\Omega|}\int_\Omega \rho_0\mathrm{d}\mathbf{x},
\end{equation*}
which is conserved in time due to the mass equation \eqref{a1}$_1$.
Correspondingly, the incompressibility is formally encoded in the global energy law, since the dissipation term involving $(2\mu+\lambda)(\divv {\bf u})^2$ strongly penalizes the compressible part of the velocity field in $L^2(\Omega\times(0,t))$ as $\lambda\to\infty$.
Meanwhile, the density may remain spatially variable and is transported by the limiting solenoidal velocity, so that the limiting system retains a nontrivial density distribution.

Over the past four decades, substantial progress has been made on the study of global-in-time solutions to the multi-dimensional isentropic compressible Navier--Stokes equations.
In the perturbative regime around constant equilibrium states, Matsumura and Nishida \cite{MN80,MN83} established the global well-posedness of classical solutions to the three-dimensional initial-boundary value problem.
Later on, using tools from harmonic analysis, particularly the Littlewood--Paley theory, Danchin \cite{Da00} proved the global existence and uniqueness of strong solutions in the critical Besov space $\dot{B}^{\frac{n}{2}-1}_{2,1}(\mathbb{R}^n)$ for $n\geq 2$.
This result was subsequently extended by Charve and Danchin \cite{CD10}, and independently by Chen--Miao--Zhang \cite{CMZ10}, to the critical spaces $\dot{B}^{\frac{n}{p}-1}_{p,1}(\mathbb{R}^n)$ for $p\in[2,2n)$ and $n\geq 2$ (see also \cite{H11}).
These functional settings are critical with respect to the natural scaling of the equations and have led to a well-developed theory of global solutions in the
small-data regime.
We also mention several works concerning global regular solutions for certain classes of large initial data in the absence of vacuum; see \cite{DM17,FZZ18,HHW19,ZLZ20} for further developments.
However, incorporating vacuum into such perturbative frameworks poses significant analytical difficulties and calls for additional mathematical tools.

When vacuum is present, the issue becomes more involved due to the degeneracy of the system. As is well-known, the compressible Navier--Stokes equations \eqref{a1} loses its parabolic structure and may exhibit singular behavior near vacuum regions (see, e.g., \cite{DM23,HLX12,M1,M2}). A fundamental breakthrough in this direction was achieved by Lions \cite{PL98}, who established the global existence of weak solutions in $\mathbb{R}^n$ for $\gamma \geq \frac{3n}{n+2}$ $(n=2,3)$ via the theory of renormalized solutions. This result was later generalized by Feireisl--Novotn\'y--Petzeltov\'a \cite{EF01} to the case $\gamma > \frac{n}{2}$ with the aid of oscillation defect measure. The borderline case $\gamma=1$ for the two-dimensional Dirichlet problem was treated in \cite{PW15}. In addition, Jiang and Zhang \cite{JZ01,JZ03} obtained global weak solutions in three dimensions for any $\gamma>1$ under spherically symmetric or axisymmetric assumptions on the initial data.
A key ingredient in their analysis is the use of the effective viscous flux to recover additional integrability properties of the density.
Despite these significant advances, for general three-dimensional initial data, the global existence of weak solutions remains not fully understood when $\gamma \in (1,\frac{3}{2}]$, partly due to the possible concentration of finite kinetic energy in arbitrarily small regions \cite{H21}. The issues of uniqueness and regularity for such weak solutions also remain open. For a comprehensive overview of the mathematical theory of compressible Navier--Stokes equations, we refer to an excellent handbook \cite{GN18} and references therein.

Parallel to the weak solutions theory, considerable efforts have been devoted to the study of global strong (or classical) solutions in the presence of vacuum. Huang--Li--Xin \cite{HLX12} proved the global existence and uniqueness of classical solutions to system \eqref{a1} in $\mathbb{R}^3$ with smooth initial data of small total energy but possibly large oscillations, allowing the far-field density to be either vacuum or non-vacuum.
This result has been further developed in several directions.
Li and Xin \cite{LX19} treated the two-dimensional Cauchy problem with vacuum at infinity under suitable smallness assumptions on the initial energy by combining time-decay estimates with spatially weighted energy methods.
More recently, the theory developed in \cite{HLX12} was extended to the Navier-slip boundary problem in \cite{CAI23}.
A central feature of these works is the derivation of a time-uniform upper bound for the density together with time-dependent higher-order estimates, which rely on suitable smallness conditions on the initial energy.
It is therefore natural to ask whether such global results can be extended to the case of initial data without any further restrictions on the energy.
It should be pointed out that smooth solutions may develop singularities in finite time. Actually, Merle et al. \cite{M2} demonstrated that local smooth solutions in three dimensions blow up when $\gamma \leq 1 + 2/\sqrt{3}$, in the sense that the $L^\infty$-norms of both the density and the velocity become unbounded, indicating additional complexities in the large-data regime.
At the same time, some progress has been achieved in certain parameter regimes. For instance, it was shown in \cite{HHPZ24} that global classical solutions may exist for large initial energy when the adiabatic exponent $\gamma$ is sufficiently close to $1$.
These developments suggest that, in certain regimes or under additional structural features, the large-data problem may admit further analysis.

Apart from the {\it large-energy weak solutions} \cite{PL98,EF01} and the {\it small-energy classical solutions} \cite{HLX12,LX19}, another important class of solutions to \eqref{a1} is given by the so-called {\it Hoff's intermediate weak solutions}.
More precisely, Hoff \cite{Hoff95,Hoff95*,Hoff02,Hoff05} established the global existence of such solutions under suitable additional assumptions on the initial data.
These solutions possess a level of regularity intermediate between the Lions--Feireisl weak solutions and the standard strong solutions.
In particular, particle trajectories can be defined in the non-vacuum region, while discontinuities of the density may still be transported along these trajectories (see \cite{Hoff02}).
One of the key ingredients in Hoff's approach is the Lagrangian formulation combined with a uniform bound on the density, which yields improved regularity properties such as the Lipschitz continuity of the effective viscous flux.
This additional regularity allows one to establish uniqueness and continuous dependence.
Indeed, Hoff and Santos \cite{Hoff06,Hoff08} obtained a global well-posedness result within this framework.
More recently, Hu--Wu--Zhong \cite{HWZ} extended the existence theory in \cite{Hoff95,Hoff95*} to the three-dimensional Cauchy problem with initial data of suitably small total energy but possibly large oscillations, requiring only bounded initial density and allowing for vacuum.
Moreover, they also showed that such solutions converge globally in time to weak solutions of the inhomogeneous incompressible Navier--Stokes equations as the bulk viscosity tends to infinity.
It is also worth mentioning that Hoff \cite{Hoff92} obtained global weak solutions to the Cauchy problem for large initial data under spherical symmetry and with initial density bounded away from vacuum.
This highlights that symmetry may provide a useful framework for the study of large-data problems.

From a different perspective, Danchin and Mucha \cite{DM17,DM23} provided a new approach to large-data problems by showing that global regular solutions may exist for large initial total energy when the bulk viscosity is sufficiently large.
In addition, they justified the convergence to the inhomogeneous incompressible Navier--Stokes system as the bulk viscosity tends to infinity.
These results were first established in $\mathbb{R}^3$ for densities close to a constant, and later extended to the periodic setting $\mathbb{T}^3$ where vacuum is allowed under suitable additional assumptions.
From a physical viewpoint, it is also natural to consider the influence of boundaries, especially in the presence of vacuum, which is relevant for realistic fluid flows.
In this direction, Hoff \cite{Hoff05} obtained global weak solutions with small energy in the half-space under Navier-slip boundary conditions.
Motivated by these developments, {\it we investigate the global existence of weak solutions with large initial data and vacuum in three-dimensional axisymmetric domains with Navier-slip boundary conditions.}
This setting differs from the spherically symmetric large-data results for the Cauchy problem in \cite{Hoff92}, and may be viewed as a further step toward extending the recent developments in \cite{DM23} to three-dimensional bounded domains.

To better capture the symmetry of the flow, we consider  the system \eqref{a1} in the annular cylindrical domain \eqref{a4}, imposing periodicity in the axial direction so as to reduce the problem to a representative cell.
Such geometries arise in applications involving rotating or layered fluid flows, for instance in aerospace engineering and propulsion systems, where fluid motion occurs in regions bounded by cylindrical structures.
In this setting, {\it the main difficulty stems from the strong nonlinear coupling between vacuum and boundary effects, together with the need to derive estimates uniform with respect to the bulk viscosity coefficient $\lambda$ that separate and reconcile the compressible and incompressible features of the system.}
To this end, we develop an analytical framework extending the Hoff-type time-weighted energy method, which enables us to derive uniform estimates accommodating boundary effects in the presence of vacuum.
Building on the existence result established in this work,
we further show that, as $\lambda \to \infty$, these solutions converge globally in time to weak solutions of the inhomogeneous incompressible Navier--Stokes system in $\Omega \times (0,\infty)$:
\begin{equation}\label{1.6}
\begin{cases}
\varrho_t+{\bf v}\cdot\nabla\varrho=0,\\
\varrho{\bf v}_t+\varrho{\bf v}\cdot\nabla {\bf v}+\nabla \Pi-\mu\Delta {\bf v}=0,\\
\divv{\bf v}=0,\\
(\varrho,{\bf v})|_{t=0}=(\rho_0,{\bf v}_0),
\end{cases}
\end{equation}
subject to the boundary conditions
\begin{equation}\label{1.7}
{\bf v}\cdot{\bf n}=0,\ \
\curl{\bf v}\times{\bf n}={\bf 0},
\quad \mathbf{x}\in\partial\Omega,\ t>0,
\end{equation}
where $\mathbf{v}_0$ denotes the Leray--Helmholtz projection of $\mathbf{u}_0$ onto divergence-free vector fields.

\subsection{Main results}

Before stating our main results, we first introduce the notation and conventions used throughout this paper. We use $C$ to denote a generic positive constant which may vary from line to line, and write $C(f)$ to emphasize its dependence on $f$. The symbol $\Box$ marks the end of a proof, and $a\triangleq b$ means that $a=b$ by definition. For $1\le p\le \infty$ and an integer $k\ge 0$, we denote the Sobolev spaces by
\begin{align*}
L^p=L^p(\Omega),\ \ W^{k, p}=W^{k, p}(\Omega),\ \ H^k=W^{k, 2}, \ \
H_\omega^1=\{\mathbf{f}\in H^1:(\mathbf{f}\cdot \mathbf{n})|_{\partial\Omega}=0, \ (\curl \mathbf{f}\times\mathbf{n})|_{\partial\Omega}=\mathbf{0}\}.
\end{align*}
For any $f\in L^1_{\loc}(\Omega)$, set $[f]_\varepsilon \triangleq j_\varepsilon * f$, where $j_\varepsilon$ is a standard mollifier of width $\varepsilon$. The convolution is taken after extending $f$ by $0$ outside $\Omega$.
For $\alpha\in (0,1]$, the H\" older seminorm of a vector field ${\bf v}:U\subseteq\overline{\Omega}\rightarrow \mathbb R^3$ is defined by
\begin{align*}
\langle {\bf v}\rangle^\alpha_{U}
=\sup\limits_{\substack{{\bf x}, {\bf y}\in U\\ {\bf x}\neq {\bf y}}}
\frac{|{\bf v}({\bf x})-{\bf v}({\bf y})|}{|{\bf x}-{\bf y}|^\alpha}.
\end{align*}
%For two $n\times n$ matrices $A=\{a_{ij}\}$ and $B=\{b_{ij}\}$, the symbol $A: B$ represents the trace of the matrix product $AB^\top$, i.e.,
%\begin{equation*}
%A:B\triangleq\operatorname{tr}(AB^\top)=\sum_{i,j=1}^na_{ij}b_{ij}.
%\end{equation*}
For convenience, we write
\begin{align*}
\int f \mathrm{d}\mathbf{x}=\int_{\Omega} f \mathrm{d}\mathbf{x},\quad f_i=\partial_if\triangleq\frac{\partial f}{\partial x_i},
\quad
\bar{f}\triangleq\frac{1}{|\Omega|}\int_\Omega f\mathrm{d}\mathbf{x}.
\end{align*}
We also employ the Leray--Helmholtz projection $\mathcal{P}$ associated with the domain, which maps a vector field onto its divergence-free component satisfying the impermeability boundary condition.
Its complement is defined by $\mathcal{Q}\triangleq \mathrm{Id}-\mathcal{P}$. Both $\mathcal{P}$ and $\mathcal{Q}$ are bounded linear operators on $L^p$ for any $1<p<\infty$, and preserve axisymmetry.

In addition, we denote by
\begin{align}\label{1.8}
\begin{cases}
\dot{f}\triangleq f_t+\mathbf{u}\cdot\nabla f,\\
F\triangleq(2\mu+\lambda)\divv\mathbf{u}-(P-\bar{P}),\\
\boldsymbol\omega\triangleq\curl\mathbf{u}=\nabla\times\mathbf{u},
\end{cases}
\end{align}
which represent the material derivative of $f$, the effective viscous flux, and the vorticity, respectively.

Finally, we introduce the cylindrical coordinates in $\mathbb R^3$ by
\begin{equation*}
r=\sqrt{x_1^2+x_2^2},\quad
\theta=\arg(x_1+i x_2)\in \mathbb T_\theta\triangleq\mathbb R/(2\pi\mathbb Z),\quad z=x_3,
\end{equation*}
so that $x_1=r\cos\theta$ and $x_2=r\sin\theta$. We denote the cylindrical orthonormal frame by
\begin{equation*}
\mathbf{e}_r(\theta)=(\cos\theta,\sin\theta,0)^\top,\quad
\mathbf{e}_\theta(\theta)=(-\sin\theta,\cos\theta,0)^\top,\quad
\mathbf{e}_z=(0,0,1)^\top.
\end{equation*}
A scalar function $f(\mathbf{x})$ and a vector field $\mathbf u(\mathbf{x})$ are called \emph{axisymmetric} if they are invariant under rotations about the $x_3$-axis. Equivalently, $f$ and the cylindrical components of $\mathbf u$ are independent of $\theta$, i.e.,
\begin{equation}\label{1.9}
f(\mathbf{x})=f(r,z),\quad
\mathbf u(\mathbf{x})=u^r(r,z)\mathbf{e}_r+u^\theta(r,z)\mathbf{e}_\theta
+u^z(r,z)\mathbf{e}_z.
\end{equation}
By the chain rule, the differential operators satisfy
\begin{equation}\label{1.10}
\begin{pmatrix}
\partial_r\\[2pt]
\partial_\theta\\[2pt]
\partial_z
\end{pmatrix}
=
\begin{pmatrix}
\cos\theta & \sin\theta & 0\\
-r\sin\theta & r\cos\theta & 0\\
0&0&1
\end{pmatrix}
\begin{pmatrix}
\partial_{x_1}\\[2pt]
\partial_{x_2}\\[2pt]
\partial_{x_3}
\end{pmatrix}.
\end{equation}
Accordingly, the domain under consideration in \eqref{a4} can be rewritten as
\begin{equation*}
\Omega=\big\{(r,\theta,z):\ r\in(1,2),\ \theta\in\mathbb T_\theta,\ z\in\mathbb T_L\big\}\triangleq \Gamma\times \mathbb T_\theta,
\end{equation*}
where the meridional (generating) domain $\Gamma$ is defined by
\begin{equation}\label{1.11}
\Gamma=(1,2)\times\mathbb T_L.
\end{equation}
Note that the lateral boundary $\partial\Omega=\{r=1\}\cup\{r=2\}$ and the unit outward normal takes the form $\mathbf{n}=\pm \mathbf{e}_r$ on $\partial\Omega$.
In cylindrical coordinates, the boundary conditions \eqref{a3} read
\begin{equation}\label{1.12}
u^r=0,\ \ \partial_r u^z=0,\ \ \partial_r(r u^\theta)=0
\quad \text{on }\partial\Omega.
\end{equation}

We recall the definition of weak solutions to the problem \eqref{a1}--\eqref{a4} in the sense of \cite{Hoff05,NS04}.
\begin{definition}\label{d1.1}
A pair $(\rho, \mathbf{u})$ is said to be a weak solution to the initial-boundary value problem \eqref{a1}--\eqref{a4} provided that
\begin{equation*}
  \rho\in C([0,\infty);H^{-1}(\Omega)),\ \
  \rho\mathbf{u}\in C([0,\infty);\widetilde{H}^{1}(\Omega)^*),\ \
  \nabla\mathbf{u}\in L^2(\Omega\times(0,\infty))
\end{equation*}
with $(\rho,\mathbf{u})|_{t=0}=(\rho_0,\mathbf{u}_0)$, where $\widetilde{H}^{1}(\Omega)^*$ is the dual of $\widetilde{H}^{1}(\Omega)=\{\mathbf{f}\in H^1:(\mathbf{f}\cdot \mathbf{n})|_{\partial\Omega}=0\}$.
Moreover, for any $t_2\ge t_1\ge0$ and any test functions $(\phi,\boldsymbol\psi)\in C^1(\overline{\Omega}\times[t_1,t_2])$ with support in $\mathbf{x}$ uniformly bounded in $t$, and satisfying $(\boldsymbol\psi\cdot\mathbf{n})|_{\partial\Omega}=0$, the following identities hold\footnote{Throughout this paper, we will use the Einstein summation over repeated indices convention.}:
\begin{align}\label{1.13}
\int_{\Omega}\rho(\mathbf{x},\cdot)\phi(\mathbf{x},\cdot)
\mathrm{d}\mathbf{x}\Big|_{t_1}^{t_2}
&=\int_{t_1}^{t_2}\int_{\Omega}(\rho\phi_t+
\rho\mathbf{u}\cdot\nabla\phi)\mathrm{d}\mathbf{x}\mathrm{d}t,\\
\int_{\Omega}(\rho\mathbf{u}\cdot\boldsymbol{\psi})
(\mathbf{x},\cdot)\mathrm{d}\mathbf{x}\Big|_{t_1}^{t_2}
&=\int_{t_1}^{t_2}
\int_{\Omega}\big(\rho\mathbf{u}\cdot\boldsymbol{\psi}_t+
\rho u^i\mathbf{u}\cdot\partial_i\boldsymbol{\psi}+P\divv\boldsymbol{\psi}\big)
\mathrm{d}\mathbf{x}\mathrm{d}t\notag\\
&\quad -\int_{t_1}^{t_2}
\int_{\Omega}\big((2\mu+\lambda)\divv\mathbf{u}\divv\boldsymbol{\psi}
+\mu\curl\mathbf{u}\cdot\curl\boldsymbol{\psi}\big)\mathrm{d}\mathbf{x}\mathrm{d}t.\label{1.14}
\end{align}
\end{definition}
\begin{lemma}[Axisymmetric reduction of test functions]\label{l1.1}
Let $R_\eta$ be the rotation about the $x_3$-axis by angle
$\eta\in[0,2\pi)$, i.e.,
\begin{equation}\label{1.15}
R_\eta=\big(\mathbf e_r(\eta),\mathbf e_\theta(\eta),\mathbf e_z\big)\in SO(3).
\end{equation}
Assume that the domain $\Omega$ is invariant under $R_\eta$ and that $(\rho,\mathbf{u})$ is axisymmetric.
For any admissible test functions $(\phi,\boldsymbol\psi)$ in Definition \ref{d1.1}, define the rotational averages
\begin{equation*}
\phi_\eta(\mathbf{x},t)\triangleq\frac1{2\pi}\int_0^{2\pi}\phi(R_\eta \mathbf{x},t)\mathrm{d}\eta,\quad
\boldsymbol\psi_\eta(\mathbf{x},t)\triangleq\frac1{2\pi}\int_0^{2\pi}R_\eta^{\top}\boldsymbol\psi(R_\eta \mathbf{x},t)\mathrm{d}\eta.
\end{equation*}
Then $(\phi_\eta,\boldsymbol\psi_\eta)$ are admissible test functions and are axisymmetric, with $\boldsymbol\psi_\eta\cdot \mathbf{n}=0$ on $\partial\Omega$.
Moreover, the weak identities \eqref{1.13}--\eqref{1.14} hold for $(\phi,\boldsymbol\psi)$ if and only if they hold for $(\phi_\eta,\boldsymbol\psi_\eta)$. Hence it suffices to test with axisymmetric functions.
\end{lemma}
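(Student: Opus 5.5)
The plan is to exploit rotation invariance of each integrand in \eqref{1.13}--\eqref{1.14} and then average over $\eta$ using Fubini.

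\textbf{Admissibility.} Since $R_\eta$ is a linear isometry preserving $\Omega$, the map $(\mathbf{x},t,\eta)\mapsto R_\eta^\top\boldsymbol\psi(R_\eta\mathbf{x},t)$ is $C^1$ on $\overline\Omega\times[t_1,t_2]$ uniformly in $\eta$. Differentiation under the integral therefore gives $\phi_\eta,\boldsymbol\psi_\eta\in C^1(\overline\Omega\times[t_1,t_2])$. The supports stay in a bounded set, since $\Omega$ is bounded anyway.

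\textbf{Axisymmetry and boundary condition.} For a fixed rotation $R_\beta$, the change of variable $\eta\mapsto\eta+\beta$ gives $\phi_\eta(R_\beta\mathbf{x})=\phi_\eta(\mathbf{x})$ and $\boldsymbol\psi_\eta(R_\beta\mathbf{x})=R_\beta\boldsymbol\psi_\eta(\mathbf{x})$. Here we use the group law $R_{\eta}R_\beta=R_{\eta+\beta}$, so $R_\eta^\top=R_\beta R_{\eta+\beta}^\top$. These two identities are exactly the invariance characterizing axisymmetry in \eqref{1.9}.

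For the boundary condition, rotation about the $x_3$-axis maps $\mathbf{n}$ equivariantly: $\mathbf{n}(R_\eta\mathbf{x})=R_\eta\mathbf{n}(\mathbf{x})$, because $\mathbf{n}=\pm\mathbf e_r$. Hence
\[
R_\eta^\top\boldsymbol\psi(R_\eta\mathbf{x})\cdot\mathbf{n}(\mathbf{x})=\boldsymbol\psi(R_\eta\mathbf{x})\cdot\mathbf{n}(R_\eta\mathbf{x})=0
\]
for $\mathbf{x}\in\partial\Omega$, since $R_\eta\mathbf{x}\in\partial\Omega$. Averaging over $\eta$ gives $\boldsymbol\psi_\eta\cdot\mathbf{n}=0$.

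\textbf{Equivalence of the weak identities.} This is the main step. Fix $\eta$ and write $\phi^{(\eta)}(\mathbf{x})=\phi(R_\eta\mathbf{x})$ and $\boldsymbol\psi^{(\eta)}(\mathbf{x})=R_\eta^\top\boldsymbol\psi(R_\eta\mathbf{x})$.

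First we show that every term of \eqref{1.13}--\eqref{1.14} takes the same value on $(\phi^{(\eta)},\boldsymbol\psi^{(\eta)})$ as on $(\phi,\boldsymbol\psi)$. This uses the axisymmetry of $(\rho,\mathbf{u})$ in the form $\rho(R_\eta\mathbf{x})=\rho(\mathbf{x})$ and $\mathbf{u}(R_\eta\mathbf{x})=R_\eta\mathbf{u}(\mathbf{x})$, together with the change of variables $\mathbf{y}=R_\eta\mathbf{x}$, which has unit Jacobian and preserves $\Omega$. Under this substitution:
\begin{itemize}
\item $\nabla\boldsymbol\psi^{(\eta)}(\mathbf{x})=R_\eta^\top(\nabla\boldsymbol\psi)(R_\eta\mathbf{x})R_\eta$;
\item $\divv$ is invariant;
\item $\curl\boldsymbol\psi^{(\eta)}(\mathbf{x})=R_\eta^\top(\curl\boldsymbol\psi)(R_\eta\mathbf{x})$, since $\det R_\eta=1$;
\item $\nabla\phi^{(\eta)}(\mathbf{x})=R_\eta^\top\nabla\phi(R_\eta\mathbf{x})$.
\end{itemize}
Since $P=P(\rho)$ and $\divv\mathbf{u}$ are invariant scalars and $\curl\mathbf{u}$ is equivariant, every dot product and contraction becomes, for example, $\rho u^i\mathbf{u}\cdot\partial_i\boldsymbol\psi$ at $R_\eta\mathbf{x}$, and the integrals coincide. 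The only care needed is that $\rho,\mathbf{u}$ lie only in the regularity class of Definition \ref{d1.1}. The change of variables is valid for $L^1$ functions, and the time-boundary terms are pairings of $\rho$ and $\rho\mathbf{u}$ with smooth functions, so no extra regularity is required.

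Next we average over $\eta\in[0,2\pi)$. Every term is linear in $(\phi,\boldsymbol\psi)$, and measurability and integrability in $(\mathbf{x},t,\eta)$ follow from $\rho\mathbf{u}\otimes\mathbf{u}, P, \nabla\mathbf{u}\in L^1$ together with the uniform bounds on the test functions. Fubini therefore lets us move the $\eta$-average inside each integral. Each side of \eqref{1.13}--\eqref{1.14} evaluated at $(\phi_\eta,\boldsymbol\psi_\eta)$ then equals the average of a constant in $\eta$, namely the value at $(\phi,\boldsymbol\psi)$.

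So the two sides for $(\phi_\eta,\boldsymbol\psi_\eta)$ coincide with those for $(\phi,\boldsymbol\psi)$, and one identity holds if and only if the other does. In particular, it suffices to test with axisymmetric functions.
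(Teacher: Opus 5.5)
Your proposal is correct and complete. The paper states Lemma \ref{l1.1} without proof, so there is no argument of its own to compare with. What you do is the natural proof, and it rests on the same rotation covariance the paper uses in proving Lemma \ref{l2.2}:
\begin{itemize}
\item conjugate the test functions by $R_\eta$;
\item check term by term that every integral in \eqref{1.13}--\eqref{1.14} is unchanged, using $\nabla(\phi\circ R_\eta)=R_\eta^\top(\nabla\phi)\circ R_\eta$, the invariance of $\divv$, $\curl(R_\eta^\top\boldsymbol\psi\circ R_\eta)=R_\eta^\top(\curl\boldsymbol\psi)\circ R_\eta$ for $\det R_\eta=1$, and the axisymmetry relations $\rho\circ R_\eta=\rho$, $\mathbf u\circ R_\eta=R_\eta\mathbf u$;
\item average in $\eta$ by differentiating under the integral sign and applying Fubini.
\end{itemize}
Because each side of the identities takes the same value at $(\phi,\boldsymbol\psi)$ and at $(\phi_\eta,\boldsymbol\psi_\eta)$, both the equivalence and the reduction to axisymmetric test functions follow.

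Two minor points could be stated explicitly.
\begin{itemize}
\item The endpoint terms at $t_1,t_2$ use axisymmetry of $\rho(\cdot,t)$ and $(\rho\mathbf u)(\cdot,t)$ at those specific times, not just for a.e.\ $t$. If axisymmetry is only known a.e.\ in time, it extends to every $t$ by the time continuity in Definition \ref{d1.1}, since conjugation by $R_\eta$ is an isometry of $H^1$ and of $\widetilde H^1(\Omega)$ and hence acts continuously on their duals.
\item You justify $\mathbf n(R_\eta\mathbf x)=R_\eta\mathbf n(\mathbf x)$ via $\mathbf n=\pm\mathbf e_r$, which is specific to \eqref{a4}. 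For a general rotation-invariant domain, as in the statement, it holds simply because $R_\eta$ is a rigid motion mapping $\Omega$ onto itself, so it sends outward normals to outward normals.
\end{itemize}
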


For the initial data $(\rho_0,\mathbf{u}_0)$, suppose that there exist constants $\hat{\rho}\ge1$ and $M\ge2$ such that
\begin{gather}\label{1.16}
0\leq\inf\rho_0\leq\sup\rho_0\leq\hat{\rho},
\\
\mathbf{u}_0\in H_\omega^1,\ \  C_0+(2\mu+\lambda)\|\divv\mathbf{u}_0\|_{L^2}^2+\mu\|\curl\mathbf{u}_0\|_{L^2}^2\leq M.\label{1.17}
\end{gather}

Now we state our first result on the global existence of weak solutions.

\begin{theorem}\label{t1.1}
Assume that the initial data $(\rho_0,\mathbf{u}_0)$ are axisymmetric and satisfy
\eqref{1.16}--\eqref{1.17}. There exists a positive constant $K$ depending only on $\hat{\rho}, a, \gamma, \mu$, and $\Omega$ such that if
\begin{equation}\label{1.18}
\lambda \ge \exp\left\{M^{\exp\{K(1+C_0)^2\}} \right\},
\end{equation}
then the problem \eqref{a1}--\eqref{a4} admits a global axisymmetric weak solution $(\rho,\mathbf{u})$ in the sense of Definition $\ref{d1.1}$ satisfying
\begin{equation}\label{1.19}
\begin{cases}
0\leq\rho(\mathbf{x},t)\leq2\hat{\rho}~~\textit{a.e.}~\mathrm{in}~\Omega\times[0,\infty),\\
(\rho,\sqrt{\rho}\mathbf{u})\in C([0,\infty);L^2(\Omega)),
~~\mathbf{u}\in L^\infty(0,\infty;H^1(\Omega)),\\
\big(\nabla^2(\mathcal{P}\mathbf{u}),\nabla F,\sqrt{\rho}\dot{\mathbf{u}}\big)\in L^2(\Omega\times(0,\infty)),\\
\sigma^{\frac{1}{2}}\sqrt{\rho}\dot{\mathbf{u}}\in L^\infty(0,\infty;L^2(\Omega)),~~ \sigma^{\frac{1}{2}}\nabla\dot{\mathbf{u}}\in L^2(\Omega\times(0,\infty)),
\end{cases}
\end{equation}
where $\sigma=\sigma(t)\triangleq\min\{1,t\}$.
\end{theorem}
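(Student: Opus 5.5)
We will prove Theorem \ref{t1.1} by an approximation--compactness argument built on a priori estimates that are uniform both in time and in $\lambda$. First we mollify the initial data, $\rho_0^\delta=[\rho_0]_\delta+\delta$ and $\mathbf u_0^\delta$ an axisymmetric smooth approximation in $H^1_\omega$, keeping axisymmetry and the bounds \eqref{1.16}--\eqref{1.17} up to harmless constants. For these data a local axisymmetric classical solution exists, with the density bounded away from vacuum. The core of the proof is a continuity (bootstrap) argument. We assume on $[0,T]$ that $\sup\rho\le 2\hat\rho$, that $A_1(T)\triangleq\sup_t\big((2\mu+\lambda)\|\divv\mathbf u\|_{L^2}^2+\mu\|\boldsymbol\omega\|_{L^2}^2\big)+\int_0^T\|\sqrt\rho\dot{\mathbf u}\|_{L^2}^2\,dt$ is at most $2M^{\alpha}$ for a suitable power, and that the Hoff weighted quantity $A_2(T)\triangleq\sup_t\sigma\|\sqrt\rho\dot{\mathbf u}\|_{L^2}^2+\int_0^T\sigma\|\nabla\dot{\mathbf u}\|_{L^2}^2$ is controlled. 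Under \eqref{1.18} we then close with the halved bounds.

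The energy steps run in this order. (i) We use the basic energy law together with $\|\divv\mathbf u\|_{L^2_{t,x}}^2\le C_0/(2\mu+\lambda)$, which shows the compressible part is small. (ii) Testing the momentum equation with $\dot{\mathbf u}$ uses the decomposition $\mu\nabla\times\boldsymbol\omega=\nabla F-\rho\dot{\mathbf u}$. On $\partial\Omega$, $\boldsymbol\omega\times\mathbf n=0$, so the boundary terms reduce to curvature terms $\mathbf u\cdot\nabla\mathbf n\cdot\mathbf u$. These are controlled because $\mathbf n=\pm\mathbf e_r$ is explicit and $\Omega$ is a smooth annulus. (iii) We use elliptic estimates: $\|\nabla F\|_{L^2}+\|\nabla\boldsymbol\omega\|_{L^2}\le C\|\rho\dot{\mathbf u}\|_{L^2}+C\|\nabla\mathbf u\|_{L^2}$, and $\|\nabla^2\mathcal P\mathbf u\|_{L^2}\le C(\|\nabla\boldsymbol\omega\|_{L^2}+\|\nabla\mathbf u\|_{L^2})$ with a div--curl estimate for the slip conditions. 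We also use $\|\divv\mathbf u\|_{L^\infty}\le (2\mu+\lambda)^{-1}(\|F\|_{L^\infty}+C)$. (iv) The Hoff time-weighted estimate comes from applying $\partial_t+\divv(\cdot\,\mathbf u)$ to the momentum equation and testing with $\sigma\dot{\mathbf u}$. Here the terms involving $\lambda$ are $(2\mu+\lambda)\divv\dot{\mathbf u}$ against commutators with $\nabla\mathbf u$. Each is rewritten through $F$ and $P$, so $\lambda$ appears only with the good sign or divided out; this is where the uniform-in-$\lambda$ analysis must handle boundary integrals, using $\dot{\mathbf u}\cdot\mathbf n=-\mathbf u\cdot\nabla\mathbf n\cdot\mathbf u$ on $\partial\Omega$.

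The main obstacle is the uniform density bound without any smallness of $C_0$. The incompressible part $\mathcal P\mathbf u$ may be large, and $\|\nabla\mathcal P\mathbf u\|_{L^\infty}$ is not available. Along particle paths, $(2\mu+\lambda)\frac{d}{dt}\log\rho=-F-(P-\bar P)$. The pressure term is damping, so $\rho\le 2\hat\rho$ follows once $(2\mu+\lambda)^{-1}\int_0^T\|F\|_{L^\infty}dt$ is small. For the large-time part we bound $\|F\|_{L^\infty}$ by Gagliardo--Nirenberg in terms of $\|\nabla F\|_{L^2}$ and the time-weighted $\|\nabla\dot{\mathbf u}\|_{L^2}$. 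For the transport and regularity of the axisymmetric solenoidal part we use the Desjardins-type logarithmic inequality $\|\mathbf v\|_{L^4}^2\lesssim\|\mathbf v\|_{L^2}(1+\|\mathbf v\|_{H^1}\sqrt{\log(e+\|\mathbf v\|_{H^1})})$ on the two-dimensional meridional domain $\Gamma$. This dimension reduction is where axisymmetry, together with the fact that $r\ge1$ stays away from the axis, is used. It yields a Gronwall inequality of double-logarithmic type, which explains the double-exponential threshold \eqref{1.18}. We would first try to close $A_1$ via this Gronwall bound, with constants $\exp\{K(1+C_0)^2\}$, and then choose $\lambda$ so that $(2\mu+\lambda)^{-1}\log$-growth terms are at most $\hat\rho$.

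Finally, uniform bounds give global existence of approximate solutions, since the blow-up criterion is controlled by $\|\rho\|_{L^\infty}$. We then pass to the limit $\delta\to0$. The weighted bounds give compactness of $\mathbf u$ and of $F$, in the style of Hoff. Strong convergence of $\rho$ follows from the effective viscous flux argument together with renormalized transport, which we justify via Friedrichs commutator estimates for $[\,\cdot\,]_\varepsilon$ extended by zero near $\partial\Omega$; there the commutator is handled using $\mathbf u\cdot\mathbf n=0$. We then verify \eqref{1.13}--\eqref{1.14} using Lemma \ref{l1.1}, so that testing with axisymmetric functions suffices. The regularity \eqref{1.19} follows from lower semicontinuity.
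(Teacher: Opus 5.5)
Your scaffold matches the paper: regularized non-vacuum data, a bootstrap on $\sup\rho$ and energy-type quantities, Hoff's time-weighted estimate, a particle-path density bound using the factor $(2\mu+\lambda)^{-1}$, and compactness. The gap is that the step replacing smallness of $C_0$ is never actually supplied. Whether $A_1$ closes is decided by the term $\int\rho\dot{\mathbf u}\cdot(\mathbf u\cdot\nabla)\mathbf u\,\mathrm d\mathbf x\le\|\sqrt\rho\dot{\mathbf u}\|_{L^2}\|\sqrt\rho\mathbf u\|_{L^4}\|\nabla\mathbf u\|_{L^4}$. This term arises whether you test with $\mathbf u_t$, as the paper does, or with $\dot{\mathbf u}$. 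The paper needs two distinct two-dimensional facts on $\Gamma$ to handle it:
\begin{itemize}
\item the genuine Desjardins inequality \eqref{log} for $\sqrt\rho\mathbf u$, where the $L^2$ factor is the energy-controlled $\|\sqrt\rho\mathbf u\|_{L^2}$ and the gradient falls on $\mathbf u$ alone;
\item Ladyzhenskaya's inequality for $\nabla_{r,z}(\mathcal P\mathbf u)$, giving $\|\nabla(\mathcal P\mathbf u)\|_{L^4}\le C\|\sqrt\rho\dot{\mathbf u}\|_{L^2}^{1/2}\|\nabla\mathbf u\|_{L^2}^{1/2}+C\|\nabla\mathbf u\|_{L^2}$.
\end{itemize}
The remaining part is handled by $\|\nabla(\mathcal Q\mathbf u)\|_{L^4}\le C(2\mu+\lambda)^{-1}\|F+P-\bar P\|_{L^4}$. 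Up to pressure terms, this bounds the convective term by $\frac14\|\sqrt\rho\dot{\mathbf u}\|_{L^2}^2+C(1+C_0)\|\nabla\mathbf u\|_{L^2}^2(1+\|\nabla\mathbf u\|_{L^2}^2)\ln(2+\|\nabla\mathbf u\|_{L^2}^2)$, hence $(\ln Y)'\le CH\ln Y$.

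The inequality you state, $\|\mathbf v\|_{L^4}^2\lesssim\|\mathbf v\|_{L^2}(1+\|\mathbf v\|_{H^1}\sqrt{\log(e+\|\mathbf v\|_{H^1})})$, carries no density weight and is weaker than plain Ladyzhenskaya. With vacuum, $\|\mathbf v\|_{L^2}$ is controlled only by $\|\nabla\mathbf u\|_{L^2}$. The cubic term is then at best $C\|\nabla\mathbf u\|_{L^2}^6$, i.e.\ $Y'\lesssim\|\nabla\mathbf u\|_{L^2}^2Y^2$, which closes only under smallness. The same failure occurs if you interpolate $\nabla\mathcal P\mathbf u$ with the 3D exponent $\|\nabla\mathbf u\|_{L^2}^{1/4}\|\nabla^2\mathcal P\mathbf u\|_{L^2}^{3/4}$. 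So your logarithmic inequality is aimed at the wrong object, the 2D gain for $\nabla\mathcal P\mathbf u$ is missing, and the double-exponential bound for $A_1$ does not follow from your sketch.

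There is a second gap in step (iv): $(2\mu+\lambda)\divv\dot{\mathbf u}$ cannot be rewritten through $F$ and $P$. It equals the material derivative of $F+P-\bar P$ plus $(2\mu+\lambda)\partial_iu^j\partial_ju^i$. The resulting interaction $(2\mu+\lambda)\sigma\int\divv\dot{\mathbf u}\,\partial_j(\mathcal P\mathbf u)^i\partial_i(\mathcal P\mathbf u)^j$ can only be absorbed at the cost of $(2\mu+\lambda)\sigma\|\nabla\mathcal P\mathbf u\|_{L^4}^4$, which is not uniform in $\lambda$.

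The paper avoids this as follows:
\begin{itemize}
\item It takes $\divf\mathbf u$ as the dissipated quantity, because $(2\mu+\lambda)\divf\mathbf u=(F+P-\bar P)_t+\mathbf u\cdot\nabla(F+P-\bar P)$ exactly.
\item This reduces the bad term to $-\sigma\int F_t(\mathcal P\mathbf u)^i_j(\mathcal P\mathbf u)^j_i$.
\item It integrates that term by parts in time rather than space, since spatial integration by parts produces uncontrollable boundary traces. It uses $(\mathcal P\mathbf u)_t=\mathcal P(\dot{\mathbf u}-\mathbf u\cdot\nabla\mathbf u)$ and $\mathcal P(\mathbf u\cdot\nabla\mathbf u)=-\mathcal P(\mathbf u\times\boldsymbol\omega)$.
\item The boundary terms carrying $P$ and $(2\mu+\lambda)\divv\mathbf u$ are recombined into $F$ as in \eqref{3.37}--\eqref{3.38}.
\end{itemize}

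Two smaller points. First, bounds uniform in $T$ require the pressure weights such as $(2\mu+\lambda)^{-2}\|P-\bar P\|_{L^4}^4$ in the Gronwall integrands to be time-integrable. The paper puts this into the bootstrap \eqref{3.1} and closes it in Lemma \ref{l3.3}. In your scheme it would follow from $P-\bar P=(2\mu+\lambda)\divv\mathbf u-F$, the energy bound, and $\|F\|_{L^2}\le C\|\sqrt\rho\dot{\mathbf u}\|_{L^2}$, but it must be stated. Second, the density bound is not the main obstacle, but $\int_0^T\|F\|_{L^\infty}\,\mathrm dt$ is not bounded uniformly in $T$. For $t\ge1$ you need the $L^2$-in-time control of $\|F\|_{L^\infty}$ together with the pressure damping, as in Case 2 of Lemma \ref{l3.5}.
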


The next result concerns the incompressible limit, corresponding to the regime where the bulk viscosity tends to infinity, of the global weak solutions established in Theorem \ref{t1.1}.

\begin{theorem}\label{t1.2}
Let $\{(\rho^\lambda,{\bf u}^\lambda)\}$ be the family of solutions obtained in Theorem \ref{t1.1}. Then there exists a subsequence $\{\lambda_k\}$ with $\lambda_k\to\infty$ such that
\begin{align}\label{1.20}
\rho^{\lambda_k}\to \varrho
\quad &\text{strongly in } L^2(K),
\quad \text{for any compact set } K\subset\subset \Omega,\ t\ge0,\\
{\bf u}^{\lambda_k}\to {\bf v}
\quad &\text{uniformly on compact subsets of } \Omega\times(0,\infty),\notag
\end{align}
where $(\varrho,{\bf v})$ is a global axisymmetric weak solution to the inhomogeneous incompressible Navier--Stokes system \eqref{1.6}--\eqref{1.7} in the sense of Definition \ref{d1.2} below.
\end{theorem}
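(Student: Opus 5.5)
The plan is a compactness argument that relies entirely on the $\lambda$-uniform bounds behind Theorem \ref{t1.1}. The constant $K$ depends only on $\hat\rho,a,\gamma,\mu,\Omega$, and $M$ dominates $(2\mu+\lambda)\|\divv\mathbf u_0\|_{L^2}^2$. So for every admissible $\lambda$ the solutions $(\rho^\lambda,\mathbf u^\lambda)$ satisfy, with constants independent of $\lambda$:
\[
0\le\rho^\lambda\le2\hat\rho,\quad \sup_t\|\nabla\mathbf u^\lambda\|_{L^2}\le C,\quad \|\nabla^2\mathcal P\mathbf u^\lambda\|_{L^2L^2}+\|\nabla F^\lambda\|_{L^2L^2}+\|\sqrt{\rho^\lambda}\dot{\mathbf u}^\lambda\|_{L^2L^2}\le C,
\]
together with the Hoff weighted bounds $\sigma^{1/2}\sqrt{\rho^\lambda}\dot{\mathbf u}^\lambda\in L^\infty L^2$ and $\sigma^{1/2}\nabla\dot{\mathbf u}^\lambda\in L^2L^2$. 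I also need the key smallness of the compressible part,
\[
\int_0^\infty\!\!\int (2\mu+\lambda)(\divv\mathbf u^\lambda)^2\le C_0 ,\qquad \|\divv \mathbf u^\lambda\|_{L^\infty L^2}\le C\lambda^{-1/2},
\]
which follows from the energy law and the $H^1$ estimate. I would first record these bounds as a lemma, extracting them from the a priori estimates of the existence proof rather than from the theorem statement itself.

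\textbf{Step 1: convergence of the velocity.} Write $\mathbf u^\lambda=\mathcal P\mathbf u^\lambda+\mathcal Q\mathbf u^\lambda$. Since $\mathcal Q\mathbf u^\lambda=\nabla\Delta^{-1}\divv\mathbf u^\lambda$ with the Neumann condition, we get $\|\mathcal Q\mathbf u^\lambda\|_{L^\infty H^1}\le C\lambda^{-1/2}\to0$. For local uniform convergence I need Hölder bounds in space-time on $\Omega'\times[\tau,T]$. In space, I would use the weighted bounds to obtain $\sigma^{1/2}\nabla\mathbf u^\lambda\in L^\infty(\tau,T;L^p)$ for some $p>3$ uniformly, which gives $\langle\mathbf u^\lambda\rangle^{\alpha}$ bounds. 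This comes from the elliptic decomposition $\mu\Delta\mathcal P\mathbf u=\mathcal P(\rho\dot{\mathbf u})$ together with $\|\nabla F\|_{L^p}\lesssim\|\rho\dot{\mathbf u}\|_{L^p}$, and the part $\nabla\mathcal Q\mathbf u=(2\mu+\lambda)^{-1}\nabla\Delta^{-1}\nabla(F+P-\bar P)$ is small. In time, I would control $\mathbf u_t=\dot{\mathbf u}-\mathbf u\cdot\nabla\mathbf u$ to obtain Hölder continuity. Arzelà--Ascoli then gives a subsequence with $\mathbf u^{\lambda_k}\to\mathbf v$ locally uniformly, $\divv\mathbf v=0$, $\mathbf v\cdot\mathbf n=0$, and weak-$*$ convergence in $L^\infty H^1$.

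\textbf{Step 2: convergence of the density, which I expect to be the main obstacle.} The bounds give only $\rho^\lambda\rightharpoonup\varrho$ weak-$*$ in $L^\infty$, and the density may be discontinuous. I would pass to the limit in $\rho_t+\divv(\rho\mathbf u)=0$, which is fine because $\mathbf u$ converges strongly. Then I would show that $\varrho$ is a renormalized solution of $\varrho_t+\mathbf v\cdot\nabla\varrho=0$, using DiPerna--Lions with $\mathbf v\in L^2H^2$. The mollification $[\cdot]_\varepsilon$ from the notation and Friedrichs commutator estimates enter here; working on compact subsets avoids the boundary issue. Renormalizing with $\beta(s)=s^2$ for the limit gives $\|\varrho(t)\|_{L^2}=\|\rho_0\|_{L^2}$. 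For $\rho^\lambda$ itself, the renormalized equation reads
\[
(\rho^2)_t+\divv(\rho^2\mathbf u)=-\rho^2\divv\mathbf u ,
\]
and the right side is $O(\lambda^{-1/2})$ in $L^1$ locally in time. Hence $\limsup\|\rho^\lambda(t)\|_{L^2}\le\|\varrho(t)\|_{L^2}$. Combined with weak convergence, which holds for each $t$ because of the time equicontinuity in $H^{-1}$, this yields strong $L^2$ convergence. The delicate point is justifying the renormalization for $\rho^\lambda$ with constants uniform in $\lambda$. If global commutator control fails near the boundary, I would settle for the local statement on $K\subset\subset\Omega$ as claimed.

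\textbf{Step 3: passage to the limit in the momentum equation.} Test \eqref{1.14} with divergence-free axisymmetric $\boldsymbol\psi$ satisfying $\boldsymbol\psi\cdot\mathbf n=0$, using Lemma \ref{l1.1}. Then the pressure term $P\divv\boldsymbol\psi$ and the bulk term $(2\mu+\lambda)\divv\mathbf u\divv\boldsymbol\psi$ vanish identically, so $\lambda$ disappears from the equation. The terms $\rho\mathbf u\cdot\boldsymbol\psi_t$ and $\rho u^iu\cdot\partial_i\boldsymbol\psi$ converge by strong convergence of $\rho$ and $\mathbf u$, and the viscous term $\mu\curl\mathbf u\cdot\curl\boldsymbol\psi$ converges weakly. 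The resulting identity is the weak form of \eqref{1.6}--\eqref{1.7} with the slip condition encoded variationally.

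Finally, I must identify the initial datum as $\mathbf v_0=\mathcal P\mathbf u_0$. Only $\varrho\mathbf v$ against divergence-free test functions is continuous at $t=0$, and $\rho_0\mathbf u_0$ tested against such $\boldsymbol\psi$ equals $\rho_0(\mathcal P\mathbf u_0+\mathcal Q\mathbf u_0)$, where the $\mathcal Q\mathbf u_0$ part is $O(\lambda^{-1/2})$ since $\lambda\|\divv\mathbf u_0\|_{L^2}^2\le M$. This part is routine. Axisymmetry of the limit is preserved under the convergences.
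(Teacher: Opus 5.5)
Your proposal is correct. It follows the paper for the velocity and the momentum equation, but takes a different route for the density, and that is the step where the paper's own argument breaks down.

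\textbf{Same as the paper.} Your velocity step and your limit in the momentum equation are essentially the paper's argument. For the velocity: an $L^4$ bound on $\nabla\mathbf u^\lambda$ for $t\ge\tau$ from \eqref{E5} and the weighted bound on $\sqrt\rho\,\dot{\mathbf u}$, H\"older continuity in time via $\mathbf u_t=\dot{\mathbf u}-\mathbf u\cdot\nabla\mathbf u$, then Arzel\`a--Ascoli. For the momentum equation you test \eqref{1.14} with divergence-free fields.

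\textbf{Density: the paper's route.} The paper tries to prove $\|(\rho^\lambda-\rho_0)(t)\|_{L^2}^2\le C(t)(2\mu+\lambda)^{-1/2}$. It then tries to show $\varrho(t)=\rho_0$ on every $K\subset\subset\Omega$, using Friedrichs commutators for $[\varrho]_\epsilon-\rho_0$. From these it concludes $\rho^{\lambda_k}(t)\to\rho_0=\varrho(t)$. This does not work. The identities \eqref{z1} and \eqref{z2} omit a term of the form $2(\rho^{\epsilon,\lambda}-\rho_0^\epsilon)\,\mathbf u^{\epsilon,\lambda}\cdot\nabla\rho_0^\epsilon$, respectively $2([\varrho]_\epsilon-\rho_0)\,\mathbf v\cdot\nabla\rho_0$. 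The term is missing because $\rho_0$ is time independent but is not transported by the flow. Moreover, \eqref{5.4} and \eqref{5.7} together would force $\mathbf v\cdot\nabla\rho_0=0$, which fails in general, since the limit density really is moved by $\mathbf v$.

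\textbf{Density: your route.} You compare norms instead:
\begin{itemize}
\item $\|\rho^\lambda(t)\|_{L^2}^2\le\|\rho_0\|_{L^2}^2+C(t/\lambda)^{1/2}$;
\item $\|\varrho(t)\|_{L^2}=\|\rho_0\|_{L^2}$, by renormalizing the limit transport equation with $\divv\mathbf v=0$;
\item $\rho^{\lambda_k}(t)\rightharpoonup\varrho(t)$ weakly for every $t$.
\end{itemize}
This is the standard mechanism and it closes. It also gives strong convergence in $L^2(\Omega)$, not only in $L^2(K)$, for every $t\ge0$ along a single subsequence.

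\textbf{The step you call delicate is not.} The identity $\frac{\mathrm d}{\mathrm dt}\int\rho^2\,\mathrm d\mathbf x=-\int\rho^2\divv\mathbf u\,\mathrm d\mathbf x$ holds exactly for the smooth approximations $\rho^{\epsilon,\lambda}$; the flux integrates to zero because $\mathbf u\cdot\mathbf n=0$. The resulting bound $C(t/\lambda)^{1/2}$ is independent of $\epsilon$. It passes to $\rho^\lambda(t)$ by weak lower semicontinuity, since $\|\rho_0^\epsilon\|_{L^2}\to\|\rho_0\|_{L^2}$.

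\textbf{Where the boundary actually matters.} It enters in proving $\|\varrho(t)\|_{L^2}=\|\rho_0\|_{L^2}$. Lemma \ref{l2.8} only yields $\partial_t\varrho^2+\divv(\varrho^2\mathbf v)=0$ in $\mathcal D'(\Omega\times(0,\infty))$. To integrate over all of $\Omega$, test with radial cutoffs $\chi_\delta$ and use $v^r=0$ at $r=1,2$. Near $r=1$ this gives $|v^r(r,z)|\le (r-1)^{1/2}\|\partial_rv^r(\cdot,z)\|_{L^2}$, and similarly near $r=2$. Hence $\int_0^t\!\int\varrho^2v^r\chi_\delta'\,\mathrm d\mathbf x\,\mathrm d\tau=O(\delta^{1/2})$.

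\textbf{Your fallback to $K\subset\subset\Omega$ would not help.} The norm comparison is intrinsically global, so localizing does not remove the boundary issue. It is also unnecessary, given the cutoff argument above.

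\textbf{Your reading of $M$ is the right one.} Taking $M$ fixed along the family, i.e. $(2\mu+\lambda)\|\divv\mathbf u_0^\lambda\|_{L^2}^2\le M$, is the only consistent interpretation; with a single fixed $\mathbf u_0$ it forces $\divv\mathbf u_0=0$. This is also exactly what makes $\mathcal Q\mathbf u_0^\lambda=O(\lambda^{-1/2})$ and lets you identify $\mathbf v_0$ as the limit of $\mathcal P\mathbf u_0^\lambda$.
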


\begin{definition}\label{d1.2}
A pair $(\varrho, {\bf v})$ is said to be a weak solution to the problem
\eqref{1.6}--\eqref{1.7} provided that
\begin{equation}\label{1.21}
\varrho\in L^\infty(\Omega\times (0,\infty)),\quad
\sqrt{\varrho}\mathbf{v}\in L^\infty([0,\infty); L^2(\Omega)),\quad
({\bf v},\nabla{\bf v})\in L^2(\Omega\times (0,\infty)).
\end{equation}
Moreover, for any $t_2\ge t_1\ge 0$ and any $C^1$ test functions $(\phi,\boldsymbol\psi)$ as in Definition \ref{d1.1}, which additionally satisfy $\divv\boldsymbol\psi(\cdot,t)=0$ in $\Omega$ for $t\in[t_1,t_2]$, the following identities hold:
\begin{gather}
\int_{\Omega}\varrho(\mathbf{x},\cdot)\phi(\mathbf{x},\cdot)\mathrm{d}\mathbf{x}
\Big|_{t_1}^{t_2}=\int_{t_1}^{t_2}\int_{\Omega}(\varrho\phi_t+
\varrho\mathbf{v}\cdot\nabla\phi)\mathrm{d}\mathbf{x}\mathrm{d}t, \label{1.22} \\
\int_{\Omega}(\varrho\mathbf{v}\cdot\boldsymbol{\psi})
(\mathbf{x},\cdot)\mathrm{d}\mathbf{x}\Big|_{t_1}^{t_2}
=\int_{t_1}^{t_2}
\int_{\Omega}\big(\varrho\mathbf{v}\cdot\boldsymbol{\psi}_t+
\varrho v^i\mathbf{v}\cdot\partial_i\boldsymbol{\psi}
-\mu\curl\mathbf{v}\cdot\curl\boldsymbol{\psi}\big)\mathrm{d}\mathbf{x}\mathrm{d}t.\label{1.23}
\end{gather}
\end{definition}

\begin{remark}
It should be noted that Theorem \ref{t1.1} reveals that the initial energy may be large once the bulk viscosity is sufficiently large, which is in sharp contrast to \cite[Theorem 1.5]{CAI23}, where the smallness of the initial energy is imposed.
One of the key ingredients in our analysis is that the large bulk viscosity enhances the dissipation associated with the term $(2\mu+\lambda)(\divv {\bf u})^2$, which strongly suppresses the compressible component of the velocity field and, combined with the equation of state and the continuity equation, yields an effective control of the density.
\end{remark}

\begin{remark}
Theorem \ref{t1.1} can be viewed as a further step in the direction of Hoff's results \cite{Hoff92, Hoff05}.
In \cite{Hoff92}, global weak solutions are obtained for large, spherically symmetric initial data away from vacuum, while \cite{Hoff05} considers the half-space with Navier-slip boundary conditions under a small-energy assumption.
In contrast, we allow large axisymmetric initial data with possible vacuum,
thereby relaxing the rigid symmetry from spherical to axisymmetric, removing the smallness condition, and incorporating boundary effects.
Moreover, our approach extends Hoff-type time-weighted estimates to a framework that is uniform with respect to the bulk viscosity.
\end{remark}

\begin{remark}
The results of Danchin and Mucha \cite{DM17,DM23} rely on different structural mechanisms. More precisely, the analysis in \cite{DM17} is based on the smallness of density fluctuations around a constant state, while in \cite{DM23} the zero initial total momentum condition in periodic domains is essential for the construction of global regular solutions. These mechanisms are not available in the three-dimensional bounded domain considered here, and hence their methods cannot be directly adopted.
One of the main ingredients in our analysis is the development of a new approach that allows large density oscillations and vacuum, while removing the zero initial total momentum condition.
\end{remark}

\begin{remark}
It should be emphasized that the Leray--Helmholtz projection plays a central role in our analysis. On the one hand, the curl-free component allows us to isolate the compressible effect and to derive {\it a priori} estimates that are uniform in the bulk viscosity.
On the other hand, the divergence-free component satisfies improved estimates,
which reflect certain structural properties of the domain and the associated flow.
It is also of interest to investigate whether analogous results hold under other boundary conditions, such as the no-slip condition, or in domains involving different axisymmetric geometries, for instance exterior cylindrical domains. Addressing these problems appears to require new ideas and is left for future investigation.
\end{remark}

\begin{remark}
The large bulk viscosity regime is physically relevant.
Indeed, as pointed out by Cramer \cite{CR}:
``Several fluids, including common diatomic gases, are seen to have bulk viscosities which are hundreds or thousands of times larger than their shear viscosities."
\end{remark}

\subsection{Strategy of the proof}

We now outline the main difficulties and strategies of the proof.
We begin with the construction of global smooth approximate solutions.
More precisely, starting from approximate initial data with strictly positive density, we first apply the local existence theory in Lemma \ref{l2.1} to obtain a local strong solution.
Then we extend this solution globally in time by means of the blow-up criterion \eqref{2.1}, provided that certain norms remain bounded.
Consequently, the problem reduces to establishing \textit{a priori} estimates on the time interval of existence that are independent of both the lower bound of the initial density and the length of the existence time, which particularly allows the presence of vacuum in the limiting process.

To justify the incompressible limit, it is further necessary to derive estimates that are uniform with respect to the bulk viscosity coefficient $\lambda$.
This introduces additional difficulties in comparison with \cite{DM23}, where the domain has no boundary, and with \cite{CAI23}, where a smallness assumption on the initial energy is imposed.
A key feature of our approach is the derivation of uniform \textit{a priori} estimates that simultaneously accommodate boundary effects and remain uniform in the bulk viscosity.

To derive the desired \textit{a priori} estimates, we proceed as follows.
For strictly positive initial densities, the blow-up criterion in \cite{HLX11} applies to strong solutions of \eqref{a1}--\eqref{a4}, namely, if $0<T^*<\infty$ is the maximal existence time, then
\begin{equation*}
\limsup_{T\nearrow T^*}\Big(\|\divv\mathbf{u}\|_{L^1(0,T;L^\infty(\Omega))}
+\|\sqrt{\rho}\mathbf{u}\|_{L^s(0,T;L^p(\Omega))}\Big)
=\infty,\quad
\frac{2}{s}+\frac{3}{p}\leq 1, \ \ 3<p\leq\infty.
\end{equation*}
This indicates that a key step toward the global existence is to derive a time-independent upper bound for the density, which, by the continuity equation \eqref{a1}$_1$, is closely tied to the control of $\|\divv \mathbf{u}\|_{L^1(0,T;L^\infty(\Omega))}$.
Once such a bound is established, it reduces to bounding $\|\nabla \mathbf{u}\|_{L^4(0,T;L^2(\Omega))}$, corresponding to the admissible pair $(s,p)=(4,6)$.
One of key ingredients is the uniform bound on
\begin{equation}\label{1.24}
\sup_{0\le t\le T}\big[(2\mu+\lambda)\|\divv \mathbf{u}\|_{L^2(\Omega)}^2
+\mu\|\curl\mathbf{u}\|_{L^2(\Omega)}^2\big]
\end{equation}
derived in Lemma \ref{l3.2}, which yields the control of velocity gradient in $L^2(\Omega)$.
On the other hand, the momentum equation \eqref{a1}$_2$ formally yields
\begin{equation*}
\divv \mathbf{u}
=\frac{-(-\Delta)^{-1}\divv(\rho\dot{\mathbf{u}})+P-\bar P}{2\mu+\lambda},
\end{equation*}
showing that $\divv \mathbf{u}$ is governed by the material derivative and the pressure fluctuation.
Accordingly, besides estimating the material derivative (see Lemmas \ref{l3.2} and \ref{l3.4}), it is crucial to control $P-\bar P$ in a way that is compatible with the factor $(2\mu+\lambda)^{-1}$.
This structure becomes particularly favorable for large bulk viscosity, as the contribution of the pressure fluctuation to $\divv \mathbf{u}$ is weakened. Motivated by these observations, we introduce a space-time estimate for the pressure fluctuation within the energy framework. More precisely, together with the upper bound of the density, we introduce and estimate
\begin{equation}\label{1.25}
\frac{1}{(2\mu+\lambda)^2}\int_0^T \|P-\bar P\|_{L^4(\Omega)}^4\mathrm{d}t,
\end{equation}
which is well adopted to the bootstrap argument (see Proposition \ref{p3.1}) and allows the pressure contribution to be absorbed into the energy estimates.
Nevertheless, the analysis must proceed at a deeper level than suggested by these heuristic considerations, as one must establish uniform estimates in the presence of boundary effects and without any smallness assumptions.

Next, we describe the main ideas of the argument and highlight the principal difficulties. The first difficulty lies in establishing the bound on \eqref{1.24}, namely Lemma \ref{l3.2}. A key step is to control the term
\begin{equation*}
\int \rho \dot{\mathbf{u}}\cdot(\mathbf{u}\cdot\nabla)\mathbf{u}\mathrm{d}\mathbf{x}
\end{equation*}
appeared in \eqref{3.8}.
In the absence of any smallness assumption on the initial energy, the main issue is to reduce the power of $\|\nabla \mathbf{u}\|_{L^2}$ arising from this nonlinear interaction, while preserving the $L^2$-dissipation of $\sqrt{\rho}\dot{\mathbf{u}}$.
To overcome this obstacle, we estimate the above term at the $L^4$-level via $\|\sqrt{\rho}\mathbf{u}\|_{L^4}$ and $\|\nabla\mathbf{u}\|_{L^4}$, which underlies the choice of \eqref{1.25} in the bootstrap argument.
The control of $\|\sqrt{\rho}\mathbf{u}\|_{L^4}$ relies on the boundedness and the underlying geometric structure of the domain, which enables us to exploit a Desjardins-type logarithmic interpolation inequality (see \eqref{3.10}).
For $\|\nabla\mathbf{u}\|_{L^4}$, we employ the Leray--Helmholtz projection for the velocity field.
The divergence-free component enjoys improved regularity and is handled by a Ladyzhenskaya-type inequality (see \eqref{3.11}).
The curl-free component is treated through the effective viscous flux,
which yields the required $L^4$-estimate as in \eqref{3.12}.
These estimates lead to a logarithmic-type differential inequality (see \eqref{3.17}), to which Gronwall's inequality applies, thereby establishing Lemma \ref{l3.2}.
We emphasize that the presence of large bulk viscosity enhances the dissipation of the compressible part, allowing the pressure contribution to $\divv \mathbf{u}$ to be absorbed.

A further delicate issue is the derivation of Hoff-type time-weighted estimates for the material derivative that are uniform with respect to the bulk viscosity $\lambda$.
The main obstacle lies in controlling the term
\begin{equation*}
(2\mu+\lambda)\sigma\int\dot{u}^j\big[\partial_j\divv\mathbf{u}_t +\divv(\mathbf{u}\partial_j\divv\mathbf{u})\big]\mathrm{d}\mathbf{x}
\end{equation*}
appearing in \eqref{3.26}, which is highly nontrivial.
To handle this term, we modify the standard approach and work in Lemma \ref{l3.4} with
\begin{equation*}
(2\mu+\lambda)\sigma\int(\divf\mathbf{u})^2\mathrm{d}\mathbf{x}
\quad \text{instead of} \quad
(2\mu+\lambda)\sigma\int(\divv\dot{\mathbf{u}})^2\mathrm{d}\mathbf{x},
\end{equation*}
as in \eqref{3.29}.
This choice is inspired by \cite{HWZ}, where the analysis is carried out under a suitable smallness assumption on the initial energy in $\mathbb{R}^3$, but it requires substantial modifications in the present setting.
In fact, another difficulty stems from the interaction between the boundary and the compressible part of the flow.
To derive estimates uniform in $\lambda$, it is not sufficient to rely solely on the effective viscous flux formulation.
Instead, we follow Hoff's strategy at the level of the momentum equation
\begin{equation*}
\rho\dot{\mathbf{u}}-(2\mu+\lambda)\nabla\divv \mathbf{u}+\mu\nabla\times\curl \mathbf{u}+\nabla P=0,
\end{equation*}
which generates boundary terms involving $P$ and $\divv \mathbf{u}$.
While these terms are delicate, this approach preserves the role of $\divv \mathbf{u}$ in the $\lambda$-uniform estimates, and the resulting boundary contributions can be reorganized and absorbed through the effective viscous flux structure (see \eqref{3.37}--\eqref{3.38}).
At the same time, we employ the Leray--Helmholtz projection to retain the contribution of the curl-free component, while balancing this with material derivative estimates.
On the other hand, the most delicate part arises from the interaction between the self-convection of the divergence-free component and the time derivative (see \eqref{3.30}--\eqref{3.31}).
Indeed, a direct integration by parts in \eqref{3.31} is not suitable, as it produces boundary terms
\begin{equation*}
  \int_{\partial\Omega} F(\mathcal{P}\mathbf{u})_t\cdot\nabla(\mathcal{P}\mathbf{u})\cdot\mathbf{n}\mathrm{d}S
  \quad \text{and} \quad
  \int_{\partial\Omega} F(\mathcal{P}\mathbf{u})\cdot\nabla(\mathcal{P}\mathbf{u})_t\cdot\mathbf{n}\mathrm{d}S,
\end{equation*}
which are difficult to control.
To circumvent this, we exploit the interaction between the divergence-free projection and the convection term, inspired by techniques from the incompressible setting and the improved regularity of the divergence-free component,
which allows us to transfer the time derivative and progressively isolate the resulting terms (see \eqref{3.31}).

Having overcome the above difficulties, we establish Lemmas \ref{l3.2} and \ref{l3.4}.
These results allow us to close the bootstrap bound for \eqref{1.25} in Lemma \ref{l3.3}.
Combined further with the preceding estimates and the Lagrangian coordinates technique from \cite{DE97}, we obtain a uniform upper bound for the density in Lemma \ref{l3.5}, thereby completing the proof of Proposition \ref{p3.1} and closing the bootstrap argument.
With these \textit{a priori} estimates in place, Lemma \ref{l3.6} excludes the blow-up scenario in \eqref{2.1}, and thus the local strong solution extends globally in time.
Finally, the global existence of weak solutions in Theorem \ref{t1.1} follows by applying a compactness argument to the approximate solutions, while the incompressible limit with large bulk viscosity in Theorem \ref{t1.2} is justified by means of Friedrichs-type commutator estimates.

The remainder of the paper is organized as follows. Section \ref{sec2} collects several known results and basic inequalities used throughout the paper. Section \ref{sec3} is devoted to the derivation of \textit{a priori} estimates. The proofs of Theorems \ref{t1.1} and \ref{t1.2} are given in Sections \ref{sec4} and \ref{sec5}, respectively.

\section{Preliminaries}\label{sec2}

In this section we collect some facts and elementary inequalities that will be used frequently later.

\subsection{Auxiliary results and inequalities}

In this subsection we review some known lemmas and facts.

First, we recall the following results concerning the local existence and the possible breakdown of strong solutions to \eqref{a1}--\eqref{a4}.
Their proofs can be adapted from the arguments in \cite{MN80,Huang21} and \cite{HLX11} to our bounded smooth domain with the slip boundary condition, after only minor modifications.

\begin{lemma}[Local existence and blow-up criterion]\label{l2.1}
Let $\Omega\subset\mathbb{R}^{3}$ be a bounded domain with smooth boundary.
For some $\tilde{q}\in(3,6)$, assume that the initial data satisfy
\begin{equation*}
\rho_0\in W^{1,\tilde{q}}(\Omega),\ \
\inf_{\mathbf{x}\in\Omega}\rho_0(\mathbf{x})>0,\ \
\mathbf{u}_0\in H^{2}(\Omega)\cap H^{1}_\omega(\Omega).
\end{equation*}
Then there exists a positive constant $T>0$ such that the initial-boundary value problem
\eqref{a1}--\eqref{a4} admits a unique strong solution $(\rho,\mathbf{u})$ in $\Omega\times(0,T)$
satisfying
\begin{equation*}
\rho\in C([0,T];W^{1,\tilde{q}}(\Omega)),\ \
\inf\limits_{\Omega\times[0,T]}\rho(\mathbf{x},t)\geq \frac12\inf\limits_{\mathbf{x}\in\Omega}\rho_0(\mathbf{x})>0,\ \
\mathbf{u}\in C([0,T]; H^{2}(\Omega)).
\end{equation*}
Moreover, if $T^*<\infty$ is the maximal time of existence, then
\begin{equation}\label{2.1}
\limsup_{T\nearrow T^*}\Big(\|\divv\mathbf{u}\|_{L^1(0,T;L^\infty(\Omega))}
+\|\sqrt{\rho}\mathbf{u}\|_{L^s(0,T;L^p(\Omega))}\Big)
=\infty,\quad
\frac{2}{s}+\frac{3}{p}\leq 1, \ \ 3<p\leq\infty.
\end{equation}
\end{lemma}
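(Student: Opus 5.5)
The local existence part will follow the classical linearization and fixed-point scheme of Matsumura--Nishida \cite{MN80}, in the form adapted to slip conditions in \cite{Huang21}. Given $\mathbf{v}\in C([0,T];H^2\cap H^1_\omega)$ with $\mathbf{v}_t\in L^2(0,T;H^1)$, we solve the linear transport equation $\rho_t+\divv(\rho\mathbf{v})=0$. Since $\mathbf{v}\cdot\mathbf{n}=0$ on $\partial\Omega$, characteristics stay in $\overline{\Omega}$. This gives $\rho\in C([0,T];W^{1,\tilde q})$ with
\[
\inf\rho\ge\inf\rho_0\,\exp\Big\{-\int_0^T\|\divv\mathbf{v}\|_{L^\infty}\Big\}.
\]
The exponential factor is at least $\frac12$ for small $T$, because $H^2\hookrightarrow W^{1,6}$ and the $W^{1,\tilde q}$ estimate only needs $\nabla\mathbf{v}\in L^1(0,T;W^{1,\tilde q})$. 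We then solve the linear parabolic Lam\'e system
\[
\rho\mathbf{u}_t-\mu\Delta\mathbf{u}-(\mu+\lambda)\nabla\divv\mathbf{u}=-\rho\mathbf{v}\cdot\nabla\mathbf{v}-\nabla P(\rho)
\]
under the boundary conditions \eqref{a3}. Because $\rho$ is bounded below, this system is uniformly parabolic, and the operator $-\mu\Delta-(\mu+\lambda)\nabla\divv=\mu\nabla\times\curl-(2\mu+\lambda)\nabla\divv$ with \eqref{a3} is a strongly elliptic system with complementing boundary conditions. Galerkin approximation plus $H^2$ elliptic regularity then gives $\mathbf{u}\in C([0,T];H^2)$ and $\mathbf{u}_t\in L^2(H^1)$. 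Standard energy estimates show that for small $T$ the map $\mathbf{v}\mapsto\mathbf{u}$ preserves a ball and is a contraction in the weaker norm $C([0,T];L^2)\cap L^2(0,T;H^1)$. Uniqueness follows from the same difference estimate.

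For the blow-up criterion \eqref{2.1}, we argue by contradiction in the manner of \cite{HLX11}. Suppose the left-hand side of \eqref{2.1} stays bounded by $M_0$ on $[0,T^*)$. We then show that $\|\rho\|_{W^{1,\tilde q}}+\|\mathbf{u}\|_{H^2}$ stays bounded up to $T^*$, so Lemma~\ref{l2.1} can be reapplied past $T^*$. The steps are as follows.

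\emph{Step 1.} The continuity equation together with $\int_0^{T^*}\|\divv\mathbf{u}\|_{L^\infty}\le M_0$ gives two-sided bounds for $\rho$.

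\emph{Step 2.} Testing the momentum equation with $\mathbf{u}_t$ gives a bound on $\|\nabla\mathbf{u}\|_{L^2}$, through $\|\divv\mathbf{u}\|_{L^2}$ and $\|\curl\mathbf{u}\|_{L^2}$. We use $\|\nabla\mathbf{u}\|_{L^2}\le C(\|\divv\mathbf{u}\|_{L^2}+\|\curl\mathbf{u}\|_{L^2})$, which holds for $\mathbf{u}\cdot\mathbf{n}=0$ on our domain. The annulus has nontrivial first Betti number, so a harmonic-field term must be controlled; $\|\mathbf{u}\|_{L^2}$ suffices for this. The convective term is bounded by $\|\sqrt\rho\mathbf{u}\|_{L^p}\|\nabla\mathbf{u}\|_{L^{2p/(p-2)}}\|\sqrt\rho\mathbf{u}_t\|_{L^2}$. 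We use the effective viscous flux $F$ and the vorticity $\boldsymbol\omega$, which satisfy $\nabla F-\mu\nabla\times\boldsymbol\omega=\rho\dot{\mathbf{u}}$, and elliptic estimates with the slip boundary condition $\boldsymbol\omega\times\mathbf{n}=0$. Together with Gagliardo--Nirenberg, these bound $\|\nabla\mathbf{u}\|_{L^{2p/(p-2)}}$. Gronwall's inequality with weight $\|\sqrt\rho\mathbf{u}\|_{L^p}^s$ then closes, and this is exactly where $\frac2s+\frac3p\le1$ is used.

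\emph{Step 3.} Estimates for $\sqrt\rho\dot{\mathbf{u}}$ follow from applying $\partial_t+\divv(\mathbf{u}\,\cdot)$ to the momentum equation.

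\emph{Step 4.} The Beale--Kato--Majda type inequality of \cite{HLX11} for slip boundaries bounds $\|\nabla\mathbf{u}\|_{L^\infty}$ logarithmically in $\|\nabla^2\mathbf{u}\|_{L^{\tilde q}}$, using $\|\divv\mathbf{u}\|_{L^\infty}$ and $\|\boldsymbol\omega\|_{L^\infty}$. Gronwall's inequality then gives the bound on $\|\nabla\rho\|_{L^{\tilde q}}$, and the $H^2$ bound on $\mathbf{u}$ follows.

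The main obstacle is the boundary treatment in Steps 2 and 4. Integrations by parts produce boundary integrals such as $\int_{\partial\Omega}(\mathbf{u}\cdot\nabla\mathbf{n}\cdot\mathbf{u})F\,\mathrm{d}S$, and the div--curl and log-type estimates need the slip condition. I would handle these by rewriting $\mathbf{u}\cdot\nabla\mathbf{u}\cdot\mathbf{n}=-\mathbf{u}\cdot\nabla\mathbf{n}\cdot\mathbf{u}$ on $\partial\Omega$ and converting the boundary integrals back to interior ones via the divergence theorem, following \cite{CAI23}. On the annular cylinder, $\mathbf{n}=\pm\mathbf{e}_r$, which makes these curvature terms explicit and lower order.
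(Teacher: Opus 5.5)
Your sketch follows the route the paper indicates: the paper gives no proof of its own and only says the results can be adapted from \cite{MN80,Huang21} for local existence and from \cite{HLX11}, with slip-boundary modifications, for the criterion, and you flesh out exactly that scheme. One repair is needed in your fixed-point class: $H^2\hookrightarrow W^{1,6}$ does not put $\divv\mathbf{v}$ in $L^\infty$, so the iteration ball must also carry $\mathbf{v}\in L^2(0,T;W^{2,\tilde q})$, and the Lam\'e step reproduces this bound by $W^{2,\tilde q}$ elliptic regularity, since $\rho\mathbf{u}_t\in L^2(0,T;L^6)$, $\rho\mathbf{v}\cdot\nabla\mathbf{v}\in L^\infty(0,T;L^6)$ and $\nabla P\in L^\infty(0,T;L^{\tilde q})$ with $\tilde q<6$ (this is where the upper restriction on $\tilde q$ enters).
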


To describe the propagation of axisymmetry, we prove the following lemma.

\begin{lemma}[Persistence of axisymmetry]\label{l2.2}
Let $\Omega\subset\mathbb{R}^3$ be a bounded smooth domain which is axisymmetric with respect to the $x_3$-axis, and let $(\rho,\mathbf{u})$ be the unique strong solution to \eqref{a1}--\eqref{a4} in $\Omega\times(0,T)$ given by Lemma \ref{l2.1}.
If the initial data $(\rho_0,\mathbf{u}_0)$ are axisymmetric, then the solution $(\rho,\mathbf{u})$ remains axisymmetric in $\Omega\times(0,T)$.
\end{lemma}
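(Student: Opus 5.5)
The plan is the standard symmetry-plus-uniqueness argument. For fixed $\eta\in[0,2\pi)$ let $R_\eta$ be the rotation in \eqref{1.15}, and define the rotated pair
\begin{equation*}
\rho_\eta(\mathbf{x},t)\triangleq\rho(R_\eta\mathbf{x},t),\quad
\mathbf{u}_\eta(\mathbf{x},t)\triangleq R_\eta^{\top}\mathbf{u}(R_\eta\mathbf{x},t).
\end{equation*}
Since $\Omega$ is invariant under $R_\eta$, the pair $(\rho_\eta,\mathbf{u}_\eta)$ is defined on $\Omega\times(0,T)$. I will show it is again a strong solution of \eqref{a1}--\eqref{a4} in the class of Lemma \ref{l2.1}, with the same initial data. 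The uniqueness part of Lemma \ref{l2.1} then gives $(\rho_\eta,\mathbf{u}_\eta)=(\rho,\mathbf{u})$ for every $\eta$. This is exactly invariance under all rotations about the $x_3$-axis, which is equivalent to \eqref{1.9}.

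\textbf{Step 1: rotational invariance of the equations.} Write $\mathbf{y}=R_\eta\mathbf{x}$ and use that $R_\eta$ is orthogonal with constant entries. The chain rule gives
\begin{equation*}
\nabla_{\mathbf{x}}\mathbf{u}_\eta(\mathbf{x})=R_\eta^{\top}(\nabla_{\mathbf{y}}\mathbf{u})(\mathbf{y})R_\eta .
\end{equation*}
From this one reads off:
\begin{itemize}
\item $\divv\mathbf{u}_\eta(\mathbf{x})=(\divv\mathbf{u})(\mathbf{y})$, since the trace is invariant under conjugation;
\item $\curl\mathbf{u}_\eta(\mathbf{x})=R_\eta^{\top}(\curl\mathbf{u})(\mathbf{y})$, since $\det R_\eta=1$;
\item $(\mathbf{u}_\eta\cdot\nabla)\mathbf{u}_\eta(\mathbf{x})=R_\eta^{\top}\big((\mathbf{u}\cdot\nabla)\mathbf{u}\big)(\mathbf{y})$;
\item $\nabla P(\rho_\eta)(\mathbf{x})=R_\eta^{\top}(\nabla P)(\mathbf{y})$ and $\mathbf{u}_\eta\cdot\nabla\rho_\eta(\mathbf{x})=(\mathbf{u}\cdot\nabla\rho)(\mathbf{y})$;
\item $\divv\mathbb{S}(\mathbf{u}_\eta)(\mathbf{x})=R_\eta^{\top}(\divv\mathbb{S}(\mathbf{u}))(\mathbf{y})$, most easily seen by writing $\divv\mathbb{S}(\mathbf{u})=(2\mu+\lambda)\nabla\divv\mathbf{u}-\mu\nabla\times\curl\mathbf{u}$.
\end{itemize}
Hence the continuity equation holds pointwise for $(\rho_\eta,\mathbf{u}_\eta)$. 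The momentum equation holds after multiplying by $R_\eta^{\top}$.

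\textbf{Step 2: boundary conditions.} The outward normal satisfies $\mathbf{n}(R_\eta\mathbf{x})=R_\eta\mathbf{n}(\mathbf{x})$, because the boundary is rotation invariant. Therefore
\begin{equation*}
\mathbf{u}_\eta\cdot\mathbf{n}(\mathbf{x})=(\mathbf{u}\cdot\mathbf{n})(\mathbf{y})=0.
\end{equation*}
Using $R^{\top}\mathbf{a}\times R^{\top}\mathbf{b}=R^{\top}(\mathbf{a}\times\mathbf{b})$ for $R\in SO(3)$, we also get
\begin{equation*}
\curl\mathbf{u}_\eta\times\mathbf{n}(\mathbf{x})=R_\eta^{\top}(\curl\mathbf{u}\times\mathbf{n})(\mathbf{y})=\mathbf{0}.
\end{equation*}
So \eqref{a3} holds for the rotated pair.

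\textbf{Step 3: initial data and regularity class.} Axisymmetry of the data gives $\rho_0(R_\eta\mathbf{x})=\rho_0(\mathbf{x})$ and $R_\eta^{\top}\mathbf{u}_0(R_\eta\mathbf{x})=\mathbf{u}_0(\mathbf{x})$. The map $\mathbf{x}\mapsto R_\eta\mathbf{x}$ is a smooth measure-preserving diffeomorphism of $\Omega$. Consequently it preserves the spaces $C([0,T];W^{1,\tilde q})$ and $C([0,T];H^2)$, and it preserves the positive lower bound of $\rho$. Thus $(\rho_\eta,\mathbf{u}_\eta)$ lies in the uniqueness class of Lemma \ref{l2.1}, and uniqueness finishes the proof.

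\textbf{Main obstacle.} The only delicate point is the bookkeeping in Step 1: making sure the vector-valued identities carry the correct factor $R_\eta^{\top}$, especially for the curl, which is where $\det R_\eta=+1$ is used. Everything else is routine.

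Finally, one should confirm that axisymmetry in the invariance sense, $\mathbf{u}(R_\eta\mathbf{x})=R_\eta\mathbf{u}(\mathbf{x})$ for all $\eta$, is equivalent to $\theta$-independence of the cylindrical components in \eqref{1.9}. To see this, note that $\mathbf{e}_r(\theta+\eta)=R_\eta\mathbf{e}_r(\theta)$, and similarly for $\mathbf{e}_\theta$, while $\mathbf{e}_z$ is fixed by $R_\eta$.
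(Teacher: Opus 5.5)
Your proposal is correct and takes the same approach as the paper: define the rotated pair $\big(\rho(R_\eta\mathbf{x},t),\,R_\eta^{\top}\mathbf{u}(R_\eta\mathbf{x},t)\big)$, check that it is a strong solution of the same problem with the same initial data, and apply the uniqueness in Lemma \ref{l2.1}. The only difference is that you carry out explicitly the invariance checks that the paper states in one line, namely the divergence, the curl via $\det R_\eta=1$, the convection term, and the impermeability and slip conditions.
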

\begin{proof}
For any $\eta\in[0,2\pi)$, let $R_\eta$ be the rotation matrix from \eqref{1.15} and define
\begin{equation*}
\check{\rho}(\mathbf{x},t)\triangleq\rho(R_\eta \mathbf{x},t),\quad
\check{\mathbf{u}}(\mathbf{x},t)\triangleq R_\eta^{\top}\mathbf{u}(R_\eta \mathbf{x},t).
\end{equation*}
Since both the system \eqref{a1} and the boundary conditions \eqref{a3} (see also \eqref{1.12}) are invariant
under the rotation $R_\eta$, and $\Omega$ as well as the initial data $(\rho_0,\mathbf{u}_0)$ are axisymmetric, it follows that $(\check{\rho},\check{\mathbf{u}})$ is also a strong solution to \eqref{a1}--\eqref{a4} in $\Omega\times(0,T)$ with the same initial data.
By the uniqueness, we conclude that $(\check{\rho},\check{\mathbf{u}})=(\rho,\mathbf{u})$, which implies that $(\rho,\mathbf{u})$ is axisymmetric.
\end{proof}

Next, we introduce the following generalized Poincar{\'e} inequality (see \cite[Lemma 8]{BS2012}).
\begin{lemma}[Poincar{\'e}'s inequality]\label{l2.3}
Let $\Omega\subset\mathbb{R}^3$ be a bounded domain with Lipschitz boundary
$\partial\Omega$ and unit outward normal $\mathbf n$.
Then, for any $1<p<\infty$, there exists a constant $C=C(p,\Omega)>0$ such that
\begin{align*}
\|\mathbf{v}\|_{L^p(\Omega)} \le C \|\nabla \mathbf{v}\|_{L^p(\Omega)}
\end{align*}
for any vector field $\mathbf{v}\in W^{1,p}(\Omega)$ satisfying either
\begin{equation*}
(\mathbf{v}\cdot \mathbf{n})|_{\partial\Omega}=0
\quad\text{or}\quad
(\mathbf{v}\times \mathbf{n})|_{\partial\Omega}=\mathbf{0}.
\end{equation*}
\end{lemma}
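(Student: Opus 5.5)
The statement to prove is Lemma \ref{l2.3}: for a bounded Lipschitz domain $\Omega$ and $1<p<\infty$,
\begin{equation*}
\|\mathbf v\|_{L^p}\le C(p,\Omega)\|\nabla\mathbf v\|_{L^p}
\end{equation*}
for every $\mathbf v\in W^{1,p}(\Omega)$ with $\mathbf v\cdot\mathbf n=0$ or $\mathbf v\times\mathbf n=\mathbf 0$ on $\partial\Omega$. I would argue by contradiction and compactness. The underlying principle is standard: the only functions with vanishing gradient are constants, and the boundary condition should rule out nonzero constants.

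Suppose the inequality fails. Then there are $\mathbf v_k\in W^{1,p}$ obeying the same boundary condition with $\|\mathbf v_k\|_{L^p}=1$ and $\|\nabla\mathbf v_k\|_{L^p}\le 1/k$. The sequence is bounded in $W^{1,p}$, so we may pass to a subsequence. By Rellich--Kondrachov (valid on bounded Lipschitz domains), $\mathbf v_k\to\mathbf v$ strongly in $L^p$, and in addition $\nabla\mathbf v_k\to0$ in $L^p$. Hence $\mathbf v_k\to\mathbf v$ strongly in $W^{1,p}$, with $\nabla\mathbf v=0$ and $\|\mathbf v\|_{L^p}=1$.

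Next I would identify the limit and pass the boundary condition to it. For the annular cylinder \eqref{a4}, and more generally, we may assume $\Omega$ is connected, so $\mathbf v\equiv\mathbf c$ is a constant vector with $\mathbf c\ne\mathbf 0$. The trace map $W^{1,p}(\Omega)\to L^p(\partial\Omega)$ is continuous and $\mathbf n\in L^\infty(\partial\Omega)$, so both boundary conditions survive the limit. We get $\mathbf c\cdot\mathbf n=0$ a.e. on $\partial\Omega$ in the first case, or $\mathbf c\times\mathbf n=\mathbf 0$ in the second.

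The step needing care is showing that a nonzero constant cannot satisfy either condition.

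In the tangential case, $\mathbf c\cdot\mathbf n=0$. Integrate by parts using $\nabla(\mathbf c\cdot\mathbf x)=\mathbf c$:
\begin{equation*}
|\mathbf c|^2|\Omega|=\int_\Omega \mathbf c\cdot\nabla(\mathbf c\cdot\mathbf x)\,\mathrm d\mathbf x=\int_{\partial\Omega}(\mathbf c\cdot\mathbf n)(\mathbf c\cdot\mathbf x)\,\mathrm dS=0.
\end{equation*}
Thus $\mathbf c=\mathbf 0$. This uses boundedness of $\Omega$ essentially; in the periodic-in-$x_3$ setting of \eqref{a4}, the constant $\mathbf e_z$ would actually be tangential. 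For the annular cylinder I would therefore either apply the lemma only to fields whose relevant mean vanishes (for axisymmetric fields with $u^r=0$ one checks $\int u^z$ directly), or note that the paper invokes the lemma with this understanding. For the proof as stated, $\Omega$ is a bounded Lipschitz domain in $\mathbb R^3$ and the identity above suffices.

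In the normal case, $\mathbf c\times\mathbf n=0$ forces $\mathbf n=\pm\mathbf c/|\mathbf c|$ a.e. on $\partial\Omega$. Using $\int_{\partial\Omega}\mathbf n\,\mathrm dS=0$ would not rule this out directly. Instead, pick a unit vector $\mathbf e\perp\mathbf c$ and apply the divergence theorem to the field $\mathbf e\,(\mathbf e\cdot\mathbf x)$:
\begin{equation*}
|\Omega|=\int_\Omega\divv\big(\mathbf e\,(\mathbf e\cdot\mathbf x)\big)\,\mathrm d\mathbf x=\int_{\partial\Omega}(\mathbf e\cdot\mathbf n)(\mathbf e\cdot\mathbf x)\,\mathrm dS=0,
\end{equation*}
which is a contradiction.

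Both cases contradict $\|\mathbf v\|_{L^p}=1$, and the lemma follows.
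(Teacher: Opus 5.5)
Your compactness--contradiction argument is correct and complete for bounded Lipschitz domains. The paper gives no proof of its own and only cites \cite[Lemma 8]{BS2012}; yours is the standard argument for this fact, and the divergence-theorem identities for $\mathbf c(\mathbf c\cdot\mathbf x)$ and $\mathbf e(\mathbf e\cdot\mathbf x)$ cleanly rule out nonzero constants in both cases.

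Your caveat is also well founded. The $x_3$-periodic cylinder \eqref{a4} is not a bounded domain of $\mathbb R^3$, and there $x_3$ is not a function on $\mathbb T_L$, so your identity breaks down. Indeed, the constant field $\mathbf e_z$ satisfies $\mathbf e_z\cdot\mathbf n=0$ on $\partial\Omega$ with $\nabla\mathbf e_z=0$, so the tangential case genuinely fails on that domain. Consequently, the paper's use of the lemma in \eqref{2.25} would additionally require control of the mean of the axial component.
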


We shall make repeated use of the following Gagliardo--Nirenberg inequalities,
which are taken from \cite[Lemma 2.3 and Remark 2.1]{LWZ}.

\begin{lemma}[Gagliardo--Nirenberg inequalities, special cases]\label{l2.4}
Let $\Omega$ be a bounded Lipschitz domain in $\mathbb{R}^{d}$ with $d\in\{2,3\}$.
Denote by $n$ the unit outward normal on $\partial\Omega$, and set
\begin{equation*}
\mathcal Z(\Omega)\triangleq\Big\{h:\ \int_{\Omega}h\,\mathrm{d}x=0\ \text{or}\ (h\cdot n)|_{\partial\Omega}=0\
\text{or}\ (h\times n)|_{\partial\Omega}=0\Big\}.
\end{equation*}
Then there exist constants $C_i\ge0\ (i=1,2,3,4)$ depending only on $d,p,q,s$, and $\Omega$ such that:
\begin{itemize}
\item[\textup{(i)}] If $d=3$, $p\in[2,6]$, $q\in(1,\infty)$, $s\in(3,\infty)$, for any $f\in H^{1}(\Omega)$ and $g\in L^{q}(\Omega)$ with $\nabla g\in L^{s}(\Omega)$,
\begin{align}
\|f\|_{L^{p}(\Omega)}
&\le C_{1}\|f\|_{L^{2}(\Omega)}^{\frac{6-p}{2p}}\|\nabla f\|_{L^{2}(\Omega)}^{\frac{3p-6}{2p}}
+ C_{2}\|f\|_{L^{2}(\Omega)},\label{GN-3D-1}\\
\|g\|_{L^{\infty}(\Omega)}
&\le C_{3}\|g\|_{L^{q}(\Omega)}^{\frac{q(s-3)}{3s+q(s-3)}}\|\nabla g\|_{L^{s}(\Omega)}^{\frac{3s}{3s+q(s-3)}}
+ C_{4}\|g\|_{L^{2}(\Omega)}.\label{GN-3D-2}
\end{align}

\item[\textup{(ii)}] If $d=2$, $p\in[2,\infty)$, $q\in(1,\infty)$, $s\in(2,\infty)$, for any $f\in H^{1}(\Omega)$ and $g\in L^{q}(\Omega)$ with $\nabla g\in L^{s}(\Omega)$,
\begin{align}
\|f\|_{L^{p}(\Omega)}
&\le C_{1}\|f\|_{L^{2}(\Omega)}^{\frac{2}{p}}\|\nabla f\|_{L^{2}(\Omega)}^{1-\frac{2}{p}}
+ C_{2}\|f\|_{L^{2}(\Omega)},\label{GN-2D-1}\\
\|g\|_{L^{\infty}(\Omega)}
&\le C_{3}\|g\|_{L^{q}(\Omega)}^{\frac{q(s-2)}{2s+q(s-2)}}\|\nabla g\|_{L^{s}(\Omega)}^{\frac{2s}{2s+q(s-2)}}
+ C_{4}\|g\|_{L^{2}(\Omega)}.\label{GN-2D-2}
\end{align}
\end{itemize}
Furthermore, the lower-order terms can be dropped, i.e., $C_2=0$ if $f\in\mathcal{Z}(\Omega)$ and $C_4=0$ if $g\in\mathcal{Z}(\Omega)$.
\end{lemma}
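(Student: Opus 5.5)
The plan is to reduce both inequalities to their whole-space counterparts with an extension operator, and then to remove the lower-order terms using the Poincaré inequalities available for the class $\mathcal Z(\Omega)$. Since $\Omega$ is a bounded Lipschitz domain, Stein's universal extension operator $E$ is bounded from $L^r(\Omega)$ to $L^r(\mathbb R^d)$ and from $W^{1,r}(\Omega)$ to $W^{1,r}(\mathbb R^d)$ simultaneously, for every $1\le r\le\infty$.

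\emph{Estimates \eqref{GN-3D-1} and \eqref{GN-2D-1}.} For $f\in H^1(\Omega)$, apply the whole-space Gagliardo--Nirenberg inequality to $Ef$:
\[
\|f\|_{L^p}\le\|Ef\|_{L^p(\mathbb R^d)}\le C\|Ef\|_{L^2}^{a}\|\nabla Ef\|_{L^2}^{1-a}\le C\|f\|_{L^2}^{a}\|f\|_{H^1}^{1-a},
\]
with $a=\frac{6-p}{2p}$ if $d=3$ and $a=\frac2p$ if $d=2$. Since $\|f\|_{H^1}\le\|f\|_{L^2}+\|\nabla f\|_{L^2}$ and $(x+y)^{1-a}\le x^{1-a}+y^{1-a}$, this gives the stated form, with lower-order term $C_2\|f\|_{L^2}$.

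\emph{Estimates \eqref{GN-3D-2} and \eqref{GN-2D-2}.} Take $g\in L^q$ with $\nabla g\in L^s$ and $s>d$. By Sobolev--Morrey on the Lipschitz domain, $g\in W^{1,\min(q,s)}\subset L^\infty$, so all quantities are finite. Apply the whole-space inequality $\|h\|_{L^\infty}\le C\|h\|_{L^q}^{\theta}\|\nabla h\|_{L^s}^{1-\theta}$ to $h=Eg$, with $\theta=\frac{q(s-d)}{ds+q(s-d)}$; this exponent is the one fixed by scaling. This yields $\|g\|_{L^\infty}\le C\|g\|_{L^q}^\theta(\|g\|_{W^{1,s}})^{1-\theta}$ in the appropriate mixed sense, where the $L^s$ norm of $g$ comes from the extension. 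To replace it by $\|g\|_{L^2}$, I would use $\|g\|_{L^s}\le|\Omega|^{1/s}\|g\|_{L^\infty}$ and absorb via Young's inequality. The residual lower-order term $\|g\|_{L^q}$ is then converted into $\|g\|_{L^2}$ plus a small multiple of $\|g\|_{L^\infty}$, by Hölder when $q\le2$ and by interpolation between $L^2$ and $L^\infty$ when $q>2$.

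\emph{Dropping the lower-order terms for $f,g\in\mathcal Z(\Omega)$.} For $\mathcal Z(\Omega)$, the Poincaré inequality $\|h\|_{L^r}\le C\|\nabla h\|_{L^r}$ holds for each of the three defining conditions: for mean zero it is the classical inequality, and for $h\cdot n=0$ or $h\times n=0$ it is Lemma~\ref{l2.3}. For $f$, write $\|f\|_{L^2}=\|f\|_{L^2}^a\|f\|_{L^2}^{1-a}\le C\|f\|_{L^2}^a\|\nabla f\|_{L^2}^{1-a}$, so $C_2\|f\|_{L^2}$ is absorbed into the main term. For $g$, bound $\|g\|_{L^2}$ by $\|g\|_{L^q}^{\beta}\|g\|_{L^\infty}^{1-\beta}$ (or by $|\Omega|$-weighted Hölder). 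Then absorb the $\|g\|_{L^\infty}$ part into the left-hand side with Young's inequality, and use Poincaré in $L^s$ together with $\|g\|_{L^\infty}\lesssim\|g\|_{W^{1,s}}\lesssim\|\nabla g\|_{L^s}$ to produce exactly the power $\|g\|_{L^q}^\theta\|\nabla g\|_{L^s}^{1-\theta}$.

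I expect the main obstacle to be this last bookkeeping: for general $q\in(1,\infty)$ one must check that after absorption the exponents match the scaling-determined $\theta$. The safest route is a contradiction/compactness argument in the spirit of Poincaré: if the homogeneous inequality failed on $\mathcal Z(\Omega)$, normalize and use the compact embedding $W^{1,s}\hookrightarrow C(\overline\Omega)$ to produce a nonzero constant in $\mathcal Z(\Omega)$, which is impossible.
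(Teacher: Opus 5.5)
Your argument is correct. It is also a genuinely different route from the paper's, because the paper gives no proof at all: it quotes the lemma from \cite[Lemma 2.3 and Remark 2.1]{LWZ}. The usual derivation behind such citations takes Nirenberg's Gagliardo--Nirenberg inequality on bounded domains with the cone property, which already carries a lower-order term in some $L^{\tilde q}$-norm. It then removes that term for $h\in\mathcal Z(\Omega)$ by a Poincar\'e inequality. Your Stein-extension argument re-derives the bounded-domain inequality from the whole-space one, which works because the extension operator is bounded on $L^r$ and on $W^{1,r}$ simultaneously, followed by an absorption step. So your proof is self-contained and shows exactly where the lower-order term comes from. The citation is shorter but hides that step.

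Three small points need attention.

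\textbf{Regularity of $g$.} The claim $W^{1,\min(q,s)}\subset L^\infty$ is false when $q\le d<s$ (e.g.\ $d=3$, $q=2$, $s=4$). You need a finite bootstrap $W^{1,r}\subset L^{r^*}$ to reach $g\in W^{1,s}\subset L^\infty$ before absorbing $\|g\|_{L^\infty}$ into the left-hand side. Alternatively, use the fact that $\nabla g\in L^s$ forces $g\in L^s$ on a bounded Lipschitz domain.

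\textbf{Removing the lower-order term.} The bookkeeping you were worried about closes directly, with no exponent matching. After your Young absorption you have $\|g\|_{L^\infty}\le C\|g\|_{L^q}^{\theta}\|\nabla g\|_{L^s}^{1-\theta}+C\|g\|_{L^q}$. For $g\in\mathcal Z(\Omega)$, write $\|g\|_{L^q}=\|g\|_{L^q}^{\theta}\|g\|_{L^q}^{1-\theta}$. Then apply $\|g\|_{L^q}\le C\|g\|_{L^\infty}\le C\|g\|_{W^{1,s}}\le C\|\nabla g\|_{L^s}$ only to the factor with exponent $1-\theta$; the last step is Poincar\'e in $L^s$. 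This gives exactly $C\|g\|_{L^q}^{\theta}\|\nabla g\|_{L^s}^{1-\theta}$. The compactness fallback is therefore unnecessary. As sketched it is also incomplete: a normalized sequence with $\|g_n\|_{L^\infty}=1$ need not be bounded in $W^{1,s}$. You would first have to use the inhomogeneous bound to get $\|g_n\|_{L^q}\ge c>0$, hence $\|\nabla g_n\|_{L^s}\to0$, before Rellich applies.

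\textbf{The case $d=2$.} Lemma~\ref{l2.3} is stated only for $d=3$. For $d=2$ the same compactness proof works: a constant vector $c$ with $c\cdot n=0$ or $c\times n=0$ on all of $\partial\Omega$ must vanish.
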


Next, we recall a Ladyzhenskaya-type inequality for axisymmetric vector fields.
Such an estimate was introduced in \cite[Lemma 6, p.~155]{LA69} under the no-slip boundary condition.
Here we record both the original version and a variant adapted to our setting.

\begin{lemma}[Ladyzhenskaya-type inequality for axisymmetric fields]\label{l2.5}
Let $\Omega=\mathcal D\times\mathbb T_\theta$
be an axisymmetric domain with respect to the $x_3$-axis, where $\mathcal D$ is the meridional domain in the $(r,z)$-plane and
$\mathbb T_\theta=\mathbb R/(2\pi\mathbb Z)$ denotes the angular variable.
Let $\mathbf n$ denote the unit outward normal vector on $\partial\Omega$, and let $\mathbf v\in H^1(\Omega)$ be an axisymmetric vector field.
Assume that $\Omega$ lies at a positive distance from the $x_3$-axis, i.e.,
$r> \delta_1>0$ in $\Omega$. Then there exists a constant $C=C(\Omega)>0$ such that:

\begin{itemize}
\item[\textup{(i)}] \textup{(No-slip case)} If
$\mathbf v|_{\partial\Omega}=\mathbf 0$, then
\begin{equation}\label{LA1}
\|\mathbf v\|_{L^4(\Omega)}^4
\le C\|\mathbf v\|_{L^2(\Omega)}^2\|\nabla\mathbf v\|_{L^2(\Omega)}^2.
\end{equation}

\item[\textup{(ii)}] \textup{(Bounded annular cylindrical case)} Assume in addition that $\Omega$ is an annular cylindrical domain with $0<\delta_1<r<\delta_2<\infty$ in $\Omega$. Then
\begin{equation}\label{LA2}
\|\mathbf v\|_{L^4(\Omega)}^4
\le C\|\mathbf v\|_{L^2(\Omega)}^2\|\mathbf v\|_{H^1(\Omega)}^2.
\end{equation}
\end{itemize}
\end{lemma}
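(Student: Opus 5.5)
The plan is to reduce both inequalities to a two-dimensional Ladyzhenskaya inequality on the meridional domain $\mathcal D$, using only that $r$ is bounded above and below on $\Omega$.

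\textbf{Reduction to the meridional domain.} For an axisymmetric field, write $\mathbf v=v^r\mathbf e_r+v^\theta\mathbf e_\theta+v^z\mathbf e_z$ with $v^r,v^\theta,v^z$ depending only on $(r,z)$. Then $|\mathbf v|^2=(v^r)^2+(v^\theta)^2+(v^z)^2\triangleq |V|^2$, where $V=(v^r,v^\theta,v^z)$ is a function on $\mathcal D$. Since $\mathrm d\mathbf x=r\,\mathrm dr\,\mathrm d\theta\,\mathrm dz$ and $\delta_1<r$ (and $r<\delta_2$ in case (ii)), every $L^p(\Omega)$ norm of $\mathbf v$ is comparable to the $L^p(\mathcal D)$ norm of $V$. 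In case (i) I would either use that $\mathcal D$ is bounded, or work with the weight $r$ directly; this weight is harmless once $r>\delta_1$.

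Next, from \eqref{1.10} one computes
\[
|\nabla \mathbf v|^2=|\partial_r V|^2+|\partial_z V|^2+\frac{(v^r)^2+(v^\theta)^2}{r^2}.
\]
Hence $\|\nabla_{r,z}V\|_{L^2(\mathcal D, r\,\mathrm dr\mathrm dz)}\le\|\nabla\mathbf v\|_{L^2(\Omega)}$. Since $r>\delta_1$, this gives the unweighted bound $\|\nabla_{r,z}V\|_{L^2(\mathcal D)}\le C\|\nabla\mathbf v\|_{L^2(\Omega)}$. (A bounded-above weight is not needed for this direction.)

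\textbf{Case (i).} If $\mathbf v|_{\partial\Omega}=\mathbf 0$, then $V$ vanishes on $\partial\mathcal D$, so each component extends by zero to $H^1_0$ functions on $\mathbb R^2$. The classical Ladyzhenskaya inequality $\|f\|_{L^4(\mathbb R^2)}^4\le C\|f\|_{L^2}^2\|\nabla f\|_{L^2}^2$ then applies componentwise. Returning to $\Omega$ uses the weight comparisons: $\int_\Omega|\mathbf v|^4\le C\int_{\mathcal D} |V|^4\,r\,\mathrm dr\mathrm dz$.

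If $\mathcal D$ is unbounded in $r$, the factor $r$ should be handled by applying Ladyzhenskaya to $\sqrt r\,V$, or better by noting $r\le r^2/\delta_1$. Concretely, I would prove $\int |V|^4 r \le C\big(\int |V|^2 r\big)\big(\int|\nabla V|^2 r\big)$ through the standard one-dimensional argument: bound $\sup_z |V|^2$ and $\sup_r (r|V|^2)$ by integrals of $|V||\partial V|$ along lines, then multiply the two bounds. The term $|V|^2$ that appears when $\partial_r$ falls on $r$ is controlled by $|V|^2\le r|V|^2/\delta_1$ and by $|V|/r\le |\nabla\mathbf v|$. I expect this weight bookkeeping to be the main obstacle of case (i); the rest is standard.

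\textbf{Case (ii).} Here $\mathcal D=(\delta_1,\delta_2)\times\mathbb T_L$ is a bounded Lipschitz two-dimensional domain and there is no boundary condition. Apply \eqref{GN-2D-1} with $p=4$ to each component of $V$:
\[
\|V\|_{L^4}^2\le C\|V\|_{L^2}\|\nabla V\|_{L^2}+C\|V\|_{L^2}^2\le C\|V\|_{L^2}\|V\|_{H^1}.
\]
Raising to the square and transferring norms back to $\Omega$ with the two-sided weight bounds $\delta_1\le r\le\delta_2$ gives \eqref{LA2}. The lower-order term is precisely why $\|\mathbf v\|_{H^1}$ replaces $\|\nabla\mathbf v\|_{L^2}$ in \eqref{LA2}.
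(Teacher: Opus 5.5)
\textbf{Part (ii).} Your argument is the paper's. The two-sided bound $\delta_1<r<\delta_2$ makes the $L^p$ and $H^1$ norms on $\Omega$ equivalent to those of $V$ on the meridional domain. Then \eqref{GN-2D-1} with $p=4$ finishes, and its lower-order term is what forces $\|\mathbf v\|_{H^1}$ on the right.

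\textbf{Part (i), bounded $\mathcal D$: correct, but a different route.} The paper gives no independent argument here. It cites Ladyzhenskaya, whose device is the substitution $w=u\,r^{1/4}$. This turns the weighted integral $\int u^4\,r\,\mathrm dr\,\mathrm dz$ exactly into the unweighted $\int w^4\,\mathrm dr\,\mathrm dz$, so the flat two-dimensional inequality applies to $w$. Only the lower bound $r>\delta_1$ (together with the zero boundary values) enters, and no upper bound on $r$ is needed. Your route instead uses two-sided comparability of $r$ and the classical inequality for the zero extensions of $v^r,v^\theta,v^z$. It is correct and more elementary. In effect it is your (ii) argument with the boundary condition removing the lower-order term. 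The price is a constant depending on $\sup_\Omega r$, which is irrelevant for the paper's annulus.

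\textbf{Part (i), unbounded $\mathcal D$: a step fails.} You claim the term $\iint|V|^2\,\mathrm dr\,\mathrm dz$ is controlled using $|V|/r\le|\nabla\mathbf v|$. That bound holds only for $v^r$ and $v^\theta$: your own formula for $|\nabla\mathbf v|^2$ has no $(v^z)^2/r^2$ term.

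Your other tool, $|V|^2\le r|V|^2/\delta_1$, does not rescue the $v^z$ component. It replaces that term by $\delta_1^{-1}\|V\|^2_{L^2(r\,\mathrm dr\mathrm dz)}$. After multiplying by the $\sup_z$ bound, this leaves a contribution of order $\|\mathbf v\|_{L^2}^3\|\nabla\mathbf v\|_{L^2}$. Absorbing that into $\|\mathbf v\|_{L^2}^2\|\nabla\mathbf v\|_{L^2}^2$ requires a Poincar\'e inequality, which fails on unbounded domains.

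The repair is to integrate by parts in $r$ on each line $z=\mathrm{const}$, using the zero boundary values:
\[
\int|V|^2\,\mathrm dr=-2\int r\,V\cdot\partial_rV\,\mathrm dr.
\]
This gives $\iint|V|^2\,\mathrm dr\,\mathrm dz\le 2\|V\|_{L^2(r\,\mathrm dr\mathrm dz)}\|\partial_rV\|_{L^2(r\,\mathrm dr\mathrm dz)}$ for all three components at once, and your product of one-dimensional sup bounds then closes. Alternatively, adopt Ladyzhenskaya's substitution, which avoids this bookkeeping altogether.
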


\begin{proof}
The estimate \eqref{LA1} is exactly the original Ladyzhenskaya-type inequality for axisymmetric functions vanishing on the boundary $\partial\Omega$; see \cite[Lemma 6, pp.~155--156]{LA69}.
A key step in the proof is to rewrite the three-dimensional integral in cylindrical coordinates and to introduce the weighted function $w(r,z)=u(r,z)\,r^{1/4}$,
so that the desired estimate is reduced to the two-dimensional meridional domain.

For \eqref{LA2}, since $r$ is bounded from above and below in the
annular cylindrical domain, all weighted factors involving $r$ are
uniformly comparable to positive constants. Hence the $L^p$- and
$H^1$-norms on $\Omega$ are equivalent to the corresponding norms on
the meridional domain. Together with the two-dimensional
Gagliardo--Nirenberg inequality \eqref{GN-2D-1}, this yields
\eqref{LA2}.
\end{proof}

We next state a Desjardins-type logarithmic interpolation inequality in two-dimensional bounded convex domains, which is purely functional and independent of the system \eqref{a1}.
This inequality extends the corresponding inequality on the torus $\mathbb{T}^2$ established in \cite[Lemma 2]{DE97} (see also \cite[Lemma 1]{D1997}).
A proof can be found in \cite[Lemma 2.4]{WWZ2}\footnote{In \cite{WWZ2}, the convexity assumption is used in a mollification argument based on
\cite[Lemma 1.50 and Theorem 1.52]{MZ97}.}.

\begin{lemma}[Desjardins-type logarithmic interpolation]\label{l2.6}
Let $\mathcal{D}\subset\mathbb{R}^2$ be a bounded convex domain with $C^{1,1}$ boundary. Assume that $0\le\rho\le\hat{\rho}~\text{a.e.}~\textrm{in}~\mathcal{D}$ and $\mathbf{u}\in H^1(\mathcal{D})$. Then there exists a positive constant $C$ depending only on $\hat{\rho}$ and $\mathcal{D}$ such that
\begin{equation}\label{log}
\|\sqrt\rho\mathbf{u}\|_{L^4(\mathcal{D})}^2\leq C\big(1+\|\sqrt\rho\mathbf{u}\|_{L^2(\mathcal{D})}\big)\|\nabla\mathbf{u}\|_{L^2(\mathcal{D})}
\sqrt{\ln\big(2+\|\nabla\mathbf{u}\|_{L^2(\mathcal{D})}^2\big)}.
\end{equation}
\end{lemma}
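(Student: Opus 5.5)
The plan is to follow Desjardins' splitting argument from the periodic case \cite[Lemma 2]{DE97}. Split $\mathbf u$ into a smooth low-frequency part and a small high-frequency remainder. Estimate the smooth part in $L^\infty$ with only a $\sqrt{\ln}$ loss, which is specific to two dimensions, and then optimize the splitting scale.

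First I will extend $\mathbf u\in H^1(\mathcal D)$ to a compactly supported $\tilde{\mathbf u}\in H^1(\mathbb R^2)$ with $\|\tilde{\mathbf u}\|_{H^1(\mathbb R^2)}\le C\|\mathbf u\|_{H^1(\mathcal D)}$; convexity and $C^{1,1}$ regularity of $\partial\mathcal D$ make this extension and the mollification estimates uniform, as in \cite{MZ97,WWZ2}. For $\varepsilon\in(0,1/2)$ set $\mathbf u_\varepsilon=j_\varepsilon*\tilde{\mathbf u}$ and $\mathbf w_\varepsilon=\mathbf u-\mathbf u_\varepsilon$ on $\mathcal D$. The two basic facts I will use are:
\begin{gather*}
\|\mathbf w_\varepsilon\|_{L^2}\le C\varepsilon\|\mathbf u\|_{H^1},\qquad
\|\mathbf w_\varepsilon\|_{L^4}\le C\varepsilon^{1/2}\|\mathbf u\|_{H^1},\\
\|\mathbf u_\varepsilon\|_{L^\infty}\le C\big(\|\mathbf u\|_{L^2}+\sqrt{\ln(1/\varepsilon)}\,\|\nabla\mathbf u\|_{L^2}\big).
\end{gather*}
The $L^4$ bound on $\mathbf w_\varepsilon$ follows by interpolating the $L^2$ bound with $\|\mathbf w_\varepsilon\|_{H^1}\le C\|\mathbf u\|_{H^1}$ through \eqref{GN-2D-1}. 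The $L^\infty$ bound comes from writing $\mathbf u_\varepsilon(x)$ as a telescoping sum of averages over dyadic balls between scales $\varepsilon$ and $1$. Each difference of consecutive averages is bounded by the $L^2$ norm of $\nabla \tilde{\mathbf u}$ on that annulus. Cauchy--Schwarz over the $\sim\ln(1/\varepsilon)$ dyadic annuli then produces the factor $\sqrt{\ln(1/\varepsilon)}$; this is the borderline Sobolev embedding $H^1\subset \mathrm{BMO}$ in the plane.

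Next, using $\|\sqrt\rho\mathbf u\|_{L^4}^2\le 2\|\sqrt\rho\mathbf u_\varepsilon\|_{L^4}^2+2\|\sqrt\rho\mathbf w_\varepsilon\|_{L^4}^2$, I will bound the two pieces separately:
\begin{align*}
\|\sqrt\rho\mathbf u_\varepsilon\|_{L^4}^2&\le \|\sqrt\rho\mathbf u_\varepsilon\|_{L^2}\,\|\sqrt\rho\mathbf u_\varepsilon\|_{L^\infty}
\le \hat\rho^{1/2}\big(\|\sqrt\rho\mathbf u\|_{L^2}+C\hat\rho^{1/2}\varepsilon\|\mathbf u\|_{H^1}\big)\|\mathbf u_\varepsilon\|_{L^\infty},\\
\|\sqrt\rho\mathbf w_\varepsilon\|_{L^4}^2&\le \hat\rho\,C\varepsilon\|\mathbf u\|_{H^1}^2.
\end{align*}
Then I choose $\varepsilon=\min\{1/2,(2+\|\nabla\mathbf u\|_{L^2}^2)^{-1}\}$, or the same with $\|\mathbf u\|_{H^1}$ in place of $\|\nabla\mathbf u\|_{L^2}$. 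With this choice the $\varepsilon$-terms become bounded by $C(1+\|\nabla\mathbf u\|_{L^2})$, and $\ln(1/\varepsilon)\le\ln(2+\|\nabla\mathbf u\|_{L^2}^2)$. Collecting terms gives
\begin{equation*}
\|\sqrt\rho\mathbf u\|_{L^4}^2\le C\big(1+\|\sqrt\rho\mathbf u\|_{L^2}\big)\big(\|\mathbf u\|_{L^2}+\|\nabla\mathbf u\|_{L^2}\big)\sqrt{\ln\big(2+\|\nabla\mathbf u\|_{L^2}^2\big)}.
\end{equation*}

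The main obstacle is the final step: replacing $\|\mathbf u\|_{L^2}$ by $\|\nabla\mathbf u\|_{L^2}$, which is needed to reach exactly the form \eqref{log}. As literally stated, \eqref{log} cannot hold for arbitrary $\mathbf u\in H^1$. Indeed, a constant $\mathbf u\ne0$ with $\rho\not\equiv0$ makes the left side positive while the right side vanishes. So some zero-order control must enter. I would first try to obtain it from a Poincaré inequality, namely Lemma \ref{l2.3} or the class $\mathcal Z(\mathcal D)$ of Lemma \ref{l2.4}, for the fields to which the lemma is applied. In the application, $\mathbf u$ satisfies $\mathbf u\cdot\mathbf n=0$ on the lateral boundary, so its radial component is controlled; the $z$- and $\theta$-components, however, need separate treatment. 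Failing that, I would split $\mathbf u=\bar{\mathbf u}+(\mathbf u-\bar{\mathbf u})$ and absorb $|\bar{\mathbf u}|$ using $\|\sqrt\rho\,\bar{\mathbf u}\|_{L^2}\le\|\sqrt\rho\mathbf u\|_{L^2}+C\|\nabla\mathbf u\|_{L^2}$. This works provided $\int\rho$ is bounded below, in which case the constant also depends on $\bar\rho$. Otherwise, the lemma has to be read with $\|\nabla\mathbf u\|_{L^2}$ meaning $\|\mathbf u\|_{H^1}$, which is what the argument above actually proves.
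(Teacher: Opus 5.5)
Your diagnosis is right, and it is the central point: \eqref{log} is false as stated. On the unit disc, take $\rho\equiv\hat\rho$ and $\mathbf u\equiv k\mathbf e_1$. The left-hand side equals $\hat\rho k^2\sqrt\pi$ while the right-hand side vanishes, so no constant works. The paper gives no proof of its own; it only points to \cite[Lemma 2.4]{WWZ2}, so nothing there repairs this.

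What is provable is Desjardins' original form \cite[Lemma 2]{DE97}, with $\|\mathbf u\|_{H^1}$ in place of $\|\nabla\mathbf u\|_{L^2}$ both in the prefactor and inside the logarithm. Alternatively, \eqref{log} itself holds under any extra hypothesis giving $\|\mathbf u\|_{L^2}\le C\|\nabla\mathbf u\|_{L^2}$. Examples are $\int_{\mathcal D}\mathbf u=0$, or $\mathbf u\cdot\mathbf n=0$ on $\partial\mathcal D$ for the whole planar field.

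Your splitting argument is the Desjardins-type mollification argument that the cited proof also follows. Because you extend to $\mathbb R^2$ before mollifying, you need neither convexity nor $C^{1,1}$ regularity; an $H^1$ extension operator on a Lipschitz domain suffices. The cited proof, by contrast, uses convexity in its mollification step. Your remark on the application is also accurate. The field $\mathbf u=c\,\mathbf e_z$ is axisymmetric, satisfies \eqref{1.12}, and has $\nabla\mathbf u=0$, so \eqref{3.10} fails for it whenever $\rho\not\equiv0$.

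Three details need repair.

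First, with averages over nested balls, the difference of the averages on $B_r(x)$ and $B_{2r}(x)$ is controlled only by $\|\nabla\tilde{\mathbf u}\|_{L^2(B_{2r}(x))}$, not by the norm on the annulus. To see this, take $\nabla\tilde{\mathbf u}$ supported in $B_r(x)$. Summing these bounds gives a factor $\ln(1/\varepsilon)$, not its square root. Use instead the averages $a_k$ over $A_k=B_{2^{k+1}\varepsilon}(x)\setminus B_{2^k\varepsilon}(x)$. The scale-invariant Poincar\'e inequality on $A_k\cup A_{k+1}$ gives $|a_k-a_{k+1}|\le C\|\nabla\tilde{\mathbf u}\|_{L^2(A_k\cup A_{k+1})}$. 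These sets overlap at most twice, so Cauchy--Schwarz produces $\sqrt{\ln(1/\varepsilon)}$. Alternatively, $\int_{\mathbb R^2}|\hat j(\varepsilon\xi)|^2(1+|\xi|^2)^{-1}\mathrm d\xi\le C\ln(1/\varepsilon)$ gives $\|\mathbf u_\varepsilon\|_{L^\infty}\le C\sqrt{\ln(1/\varepsilon)}\,\|\mathbf u\|_{H^1}$, which is enough for the $H^1$ version.

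Second, your remainder bounds involve $\|\mathbf u\|_{H^1}$, so you must take $\varepsilon=(2+\|\mathbf u\|_{H^1}^2)^{-1}$. A choice built from $\|\nabla\mathbf u\|_{L^2}$ leaves $\varepsilon\|\mathbf u\|_{H^1}^2$ uncontrolled when $\|\mathbf u\|_{L^2}\gg\|\nabla\mathbf u\|_{L^2}$. Also use $\varepsilon\|\mathbf u\|_{H^1}^2\le\|\mathbf u\|_{H^1}$ (from $2+a^2\ge2\sqrt2\,a$) rather than a bound by $1$. An additive constant cannot be absorbed into a right-hand side that vanishes with $\mathbf u$.

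If you want $\|\nabla\mathbf u\|_{L^2}$ inside the logarithm, extend $\mathbf u-\bar{\mathbf u}$ and add $\bar{\mathbf u}\chi$, where $\chi\in C_c^\infty(\mathbb R^2)$ equals $1$ on a unit neighbourhood of $\mathcal D$. Then $\mathbf w_\varepsilon$ is controlled by $\|\nabla\mathbf u\|_{L^2}$ alone, and the same argument gives
\begin{equation*}
\|\sqrt\rho\mathbf u\|_{L^4}^2\le C\big(1+\|\sqrt\rho\mathbf u\|_{L^2}\big)\Big(|\bar{\mathbf u}|+\|\nabla\mathbf u\|_{L^2}\sqrt{\ln\big(2+\|\nabla\mathbf u\|_{L^2}^2\big)}\Big).
\end{equation*}
This isolates the mean as the only obstruction.

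Third, your fallback using a lower bound on $\int\rho$ does not recover \eqref{log}. Trading $|\bar{\mathbf u}|$ for $C(\bar\rho)\big(\|\sqrt\rho\mathbf u\|_{L^2}+\|\nabla\mathbf u\|_{L^2}\big)$ leaves the extra term $C(1+\|\sqrt\rho\mathbf u\|_{L^2})\|\sqrt\rho\mathbf u\|_{L^2}$. Your own counterexample has $\rho$ constant, hence mass bounded below, and it shows that this term cannot be removed.
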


The following Hodge-type estimates can be found in \cite[Theorem 3.2]{WW92} and \cite[Propositions 2.6--2.9]{A14}.

\begin{lemma}[Hodge-type estimates]\label{l2.7}
Let $k\ge0$ be an integer and $1<q<\infty$. Assume that $\Omega\subset\mathbb{R}^3$ is a bounded domain with $C^{k+1,1}$ boundary $\partial\Omega$ and unit outward normal $\mathbf{n}$. Then there exists a constant $C=C(k,q,\Omega)>0$ such that the following hold for any $\mathbf{v}\in W^{k+1,q}(\Omega)$.

\begin{itemize}
\item[\textup{(i)}] \textup{(Vanishing normal component)} If $(\mathbf{v}\cdot \mathbf{n})|_{\partial\Omega}=0$, then
\begin{equation}\label{HO1}
\|\mathbf{v}\|_{W^{k+1,q}(\Omega)}
\le C\big(\|\divv\mathbf{v}\|_{W^{k,q}(\Omega)}+\|\curl\mathbf{v}\|_{W^{k,q}(\Omega)}+\|\mathbf{v}\|_{L^{q}(\Omega)}\big).
\end{equation}
If, in addition, $\Omega$ is simply connected, then the lower-order term in \eqref{HO1} can be dropped, i.e.,
\begin{equation}\label{HO2}
\|\mathbf{v}\|_{W^{k+1,q}(\Omega)}
\le C\big(\|\divv\mathbf{v}\|_{W^{k,q}(\Omega)}+\|\curl\mathbf{v}\|_{W^{k,q}(\Omega)}\big).
\end{equation}
In particular, for $k=0$,
\begin{equation}\label{HO3}
\|\nabla\mathbf{v}\|_{L^{q}(\Omega)}
\le C\big(\|\divv\mathbf{v}\|_{L^{q}(\Omega)}+\|\curl\mathbf{v}\|_{L^{q}(\Omega)}\big).
\end{equation}

\item[\textup{(ii)}] \textup{(Vanishing tangential component)} Suppose that $\partial\Omega$ has a finite number of two-dimensional connected components. If $(\mathbf{v}\times \mathbf{n})|_{\partial\Omega}=\mathbf{0}$, then
\begin{equation}\label{HO4}
\|\mathbf{v}\|_{W^{k+1,q}(\Omega)}
\le C\big(\|\divv\mathbf{v}\|_{W^{k,q}(\Omega)}+\|\curl\mathbf{v}\|_{W^{k,q}(\Omega)}+\|\mathbf{v}\|_{L^{q}(\Omega)}\big).
\end{equation}
Furthermore, the lower-order term $\|\mathbf{v}\|_{L^{q}(\Omega)}$ in \eqref{HO4} can be dropped if $\Omega$ has no holes.
\end{itemize}
\end{lemma}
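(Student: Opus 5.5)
The last statement is Lemma \ref{l2.7}, the Hodge-type estimates. These are classical results quoted from \cite{WW92,A14}, so the plan is to indicate how one would derive them rather than reprove everything from scratch.

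For (i) with $k=0$, the approach is elliptic regularity for the div--curl system.

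First, any $\mathbf{v}\in W^{1,q}$ with $\mathbf{v}\cdot\mathbf{n}=0$ is decomposed via the Helmholtz decomposition as $\mathbf{v}=\nabla\phi+\mathbf{w}+\mathbf{h}$:
\begin{itemize}
\item $\phi$ solves the Neumann problem $\Delta\phi=\divv\mathbf{v}$ with $\partial_{\mathbf n}\phi=0$. The compatibility condition holds because $\int\divv\mathbf{v}=\int_{\partial\Omega}\mathbf{v}\cdot\mathbf{n}=0$.
\item $\mathbf{w}$ is divergence-free, tangential, orthogonal to harmonic fields, and satisfies $\curl\mathbf{w}=\curl\mathbf{v}$.
\item $\mathbf{h}$ lies in the finite-dimensional space of harmonic Neumann fields, those with $\divv\mathbf{h}=0$, $\curl\mathbf{h}=0$ and $\mathbf{h}\cdot\mathbf{n}=0$.
\end{itemize}

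$L^q$ Neumann theory on $C^{1,1}$ domains gives $\|\nabla^2\phi\|_{L^q}\le C\|\divv\mathbf v\|_{L^q}$. For $\mathbf{w}$, one writes $\mathbf w=\curl\boldsymbol\psi$ with a vector potential $\boldsymbol\psi$ satisfying $\boldsymbol\psi\times\mathbf n=0$. Then $-\Delta\boldsymbol\psi=\curl\mathbf v$, which is an elliptic system satisfying the Agmon--Douglis--Nirenberg complementing condition, and it yields $\|\nabla\mathbf w\|_{L^q}\le C\|\curl\mathbf v\|_{L^q}$. The harmonic part is smooth up to the boundary, so $\|\mathbf h\|_{W^{1,q}}\le C\|\mathbf h\|_{L^q}\le C\|\mathbf v\|_{L^q}$, which produces the lower-order term in \eqref{HO1}.

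Higher $k$ follow by standard difference-quotient and flattening arguments on $C^{k+1,1}$ boundaries.

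If $\Omega$ is simply connected, the space of harmonic Neumann fields is trivial, since its dimension equals the first Betti number. A compactness (Peetre--Tartar) argument then removes $\|\mathbf v\|_{L^q}$:
\begin{itemize}
\item Suppose \eqref{HO2} failed. Then there would be a sequence normalized in $W^{1,q}$ with $\divv\mathbf v$ and $\curl\mathbf v$ tending to $0$.
\item Rellich compactness gives a strong $L^q$ limit.
\item By \eqref{HO1} this limit is a nonzero harmonic Neumann field, which contradicts triviality.
\end{itemize}
Estimate \eqref{HO3} is the case $k=0$.

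For (ii), $\mathbf v\times\mathbf n=0$, the argument is symmetric. One uses the Dirichlet problem for $\phi$, with $\phi$ constant on each boundary component, together with the tangential vector potential. The obstruction space is now the harmonic Dirichlet fields, whose dimension is the number of boundary components minus one. It vanishes when $\Omega$ has no holes, so the same compactness argument drops the lower-order term.

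The main technical obstacle is the $L^q$ ($q\neq2$) estimate for the vector potential system, which requires verifying the Lopatinskii--Shapiro conditions for the mixed boundary conditions. For this I would rely directly on \cite{WW92,A14}.

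Finally, I note that the annular cylinder \eqref{a4} is not simply connected. So in later use, \eqref{HO3} can only be applied to fields without a harmonic Neumann component, for example $\mathcal Q$-type or $\mathcal P$-projected fields orthogonal to harmonic fields. Otherwise \eqref{HO1} must be used together with Lemma \ref{l2.3}.
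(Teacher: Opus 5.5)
Your sketch is correct and consistent with the paper, which does not prove this lemma at all but quotes it from \cite{WW92,A14}. Your route is the standard argument behind those results: a Hodge decomposition with a normal (resp.\ tangential) vector potential, followed by the compactness argument using that the harmonic Neumann (resp.\ Dirichlet) fields have dimension equal to the first Betti number (resp.\ the number of boundary components minus one). You also defer the same hard $L^q$ vector-potential estimate to those references.

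Your closing observation that the annular cylinder is not simply connected is right, but the suggested remedy does not work. Inserting Lemma \ref{l2.3} into \eqref{HO1} is circular, since it bounds $\|\mathbf v\|_{L^q}$ by $\|\nabla\mathbf v\|_{L^q}$ with a constant that need not be small. Moreover, on the $z$-periodic annulus Lemma \ref{l2.3} itself fails for the tangential constant field $\mathbf e_z$. The axisymmetric field $\mathbf e_\theta/r\in H^1_\omega$ has $\divv(\mathbf e_\theta/r)=0$ and $\curl(\mathbf e_\theta/r)=\mathbf 0$ but $\nabla(\mathbf e_\theta/r)\neq0$, so it violates \eqref{HO3} directly. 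On this domain the lower-order term therefore has to be controlled by other means.
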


Finally, we recall a commutator estimate for mollification from
\cite[Lemma 4.3]{F04}, which will be used in the mollifier argument for
the incompressible limit.

\begin{lemma}[Friedrichs-type commutator estimate]\label{l2.8}
Let $\Omega\subset\mathbb{R}^d$ be a domain with $d\in\{2,3\}$.
Assume that $\rho\in L^p(\Omega)$ and ${\bf u}\in W^{1,q}(\Omega)$
for some $1<p\le\infty$, $1\le q<\infty$ satisfying $\frac1p+\frac1q\le1$.
Then, for any compact set $K\subset\subset\Omega$, there exists a constant
$C=C(K)>0$ such that, for all $\varepsilon>0$,
\begin{equation}\label{CO1}
\big\|\divv[\rho{\bf u}]_\varepsilon-\divv\big([\rho]_\varepsilon{\bf u}\big)\big\|_{L^1(K)}
\le C\|\rho\|_{L^p(\Omega)}\|{\bf u}\|_{W^{1,q}(\Omega)} .
\end{equation}
Moreover,
\begin{equation}\label{CO2}
\divv[\rho{\bf u}]_\varepsilon-\divv\big([\rho]_\varepsilon{\bf u}\big)
\to0 \ \ \text{in } L^1(K) \quad \text{as }\ \varepsilon\to0 .
\end{equation}
\end{lemma}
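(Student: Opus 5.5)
The plan is the classical Friedrichs argument. Fix a compact $K\subset\subset\Omega$ and let $\varepsilon_0=\tfrac12\operatorname{dist}(K,\partial\Omega)$. For $\varepsilon<\varepsilon_0$ the mollification $[\cdot]_\varepsilon$ at points of $K$ only sees values in the $\varepsilon_0$-neighbourhood $K'\subset\subset\Omega$, so extending by zero plays no role. For $\varepsilon\ge\varepsilon_0$ the bound \eqref{CO1} is trivial: both $\divv[\rho\mathbf u]_\varepsilon$ and $\divv([\rho]_\varepsilon\mathbf u)$ are bounded in $L^1(K)$ by $C(\varepsilon_0)\|\rho\|_{L^p}\|\mathbf u\|_{W^{1,q}}$, using $\|\nabla j_\varepsilon\|_{L^1}\le C/\varepsilon_0$ and Hölder on the bounded set $K$. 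So only small $\varepsilon$ matters.

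For $\varepsilon<\varepsilon_0$ and $\mathbf x\in K$, first take $\mathbf u$ smooth and write
\begin{equation*}
\divv[\rho\mathbf u]_\varepsilon(\mathbf x)-\divv([\rho]_\varepsilon\mathbf u)(\mathbf x)
=\int\rho(\mathbf y)\big(\mathbf u(\mathbf y)-\mathbf u(\mathbf x)\big)\cdot\nabla j_\varepsilon(\mathbf x-\mathbf y)\,\mathrm d\mathbf y-[\rho]_\varepsilon\divv\mathbf u(\mathbf x).
\end{equation*}
The last term is bounded in $L^1(K)$ by $\|\rho\|_{L^p}\|\nabla\mathbf u\|_{L^q}|K|^{1-1/p-1/q}$. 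For the first term, use $\mathbf u(\mathbf y)-\mathbf u(\mathbf x)=\int_0^1\nabla\mathbf u(\mathbf x+s(\mathbf y-\mathbf x))(\mathbf y-\mathbf x)\,\mathrm ds$ and substitute $\mathbf y=\mathbf x-\varepsilon\mathbf z$. This turns the integral into
\begin{equation*}
\int_0^1\!\!\int_{|\mathbf z|\le1}\rho(\mathbf x-\varepsilon\mathbf z)\,\nabla\mathbf u(\mathbf x-s\varepsilon\mathbf z)\,\mathbf z\cdot\nabla j(\mathbf z)\,\mathrm d\mathbf z\,\mathrm ds,
\end{equation*}
in which the factor $\varepsilon^{-1}$ from $\nabla j_\varepsilon$ cancels against $|\mathbf y-\mathbf x|\le\varepsilon$. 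Integrating over $K$, applying Fubini, and then Hölder in $\mathbf x$ for each fixed $(\mathbf z,s)$ gives the bound $C\|\nabla j\|_{L^1}\|\rho\|_{L^p(K')}\|\nabla\mathbf u\|_{L^q(K')}$. The condition $\tfrac1p+\tfrac1q\le1$ together with boundedness of $K$ is exactly what makes this Hölder step valid. Density of smooth functions in $W^{1,q}$, combined with the fact that the commutator is bilinear and already controlled by this bound, extends \eqref{CO1} to all $\mathbf u\in W^{1,q}$.

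For \eqref{CO2}, approximate $\rho$ in $L^p(K')$ by smooth $\rho_n$ when $p<\infty$. For smooth $\rho_n$ (or smooth $\mathbf u$) the commutator tends to zero in $L^1(K)$ by uniform convergence of mollifiers and of their derivatives. The uniform bound \eqref{CO1} then gives the conclusion by a standard $\epsilon/3$ argument. When $p=\infty$ we have $q\ge1$ finite, so we approximate $\mathbf u$ in $W^{1,q}$ by smooth fields instead and argue the same way. For smooth $\mathbf u$, the formula above converges to $\rho\nabla\mathbf u:\int\mathbf z\otimes\nabla j\,\mathrm d\mathbf z-\rho\divv\mathbf u=-\rho\divv\mathbf u+\rho\divv\mathbf u=0$, since $\int z_i\partial_k j\,\mathrm d\mathbf z=-\delta_{ik}$; this limit holds by continuity of translations in $L^p$, $p<\infty$.

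The main obstacle is this last convergence step when $p=\infty$, where translations are not continuous. Approximating $\mathbf u$ rather than $\rho$ avoids it: with $\mathbf u$ smooth, the identity $-\rho\divv\mathbf u+\rho\divv\mathbf u=0$ can be checked with $\rho$ fixed, using only $L^1$-continuity of translations of $\rho$ on the bounded set $K'$.
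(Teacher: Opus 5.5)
Your argument is correct. It is the classical Friedrichs/DiPerna--Lions commutator proof: the explicit kernel formula, cancelling $\varepsilon^{-1}$ against $|\mathbf y-\mathbf x|\le\varepsilon$, H\"older, then density. The paper gives no proof of its own and simply quotes this result from \cite[Lemma 4.3]{F04}; your separate treatment of $\varepsilon\ge\varepsilon_0$ and your handling of $p=\infty$ by approximating $\mathbf u$ rather than $\rho$ are well placed.

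The only slip is a sign. Since $\mathbf y-\mathbf x=-\varepsilon\mathbf z$, the transformed integral carries a minus sign and tends to $+\rho\divv\mathbf u$. That is what cancels the limit $-\rho\divv\mathbf u$ of $-[\rho]_\varepsilon\divv\mathbf u$. As you wrote it, both signs are off and they only compensate in your final line.
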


\subsection{Uniform lower-order $L^p$-estimates}

In this subsection we derive several lower-order $L^p$-estimates for
$F$, $\curl\mathbf{u}$, $\nabla\mathbf{u}$, and $\dot{\mathbf{u}}$.
Throughout, $C$ denotes a generic constant independent of $\lambda$.

\begin{lemma}\label{l2.9}
Let $(\rho,\mathbf{u})$ be a smooth solution to the problem \eqref{a1}--\eqref{a4}. Then, for any $p\in[2,6]$, there exists a positive constant $C$ depending only on $p,\mu$, and $\Omega$ such that
\begin{gather}\label{E1}
\|\nabla F\|_{L^p}\leq C\|\rho\dot{\mathbf{u}}\|_{L^p},\\
\label{E2}
\|\nabla\curl \mathbf{u}\|_{L^p}+\|\nabla^2(\mathcal{P}\mathbf{u})\|_{L^p}\leq C\|\rho\dot{\mathbf{u}}\|_{L^p}+C\|\nabla \mathbf{u}\|_{L^p},
\\ \label{E3}
\|F\|_{L^p}\leq
C\|\rho\dot{\mathbf{u}}\|_{L^2}^{\frac{3p-6}{2p}}
\big((2\mu+\lambda)\|\divv\mathbf{u}\|_{L^2}+\|P-\bar{P}\|_{L^2}\big)^\frac{6-p}{2p},
\\ \label{E4}
\|\curl\mathbf{u}\|_{L^p}+\|\nabla(\mathcal{P}\mathbf{u})\|_{L^p}\leq C\|\rho\dot{\mathbf{u}}\|_{L^2}^{\frac{3p-6}{2p}}\|\nabla \mathbf{u}\|_{L^2}^\frac{6-p}{2p}+C\|\nabla \mathbf{u}\|_{L^2},
\\ \label{E5}
\|\nabla \mathbf{u}\|_{L^p}
\leq \frac{C}{2\mu+\lambda}
\Big(\|\rho\dot{\mathbf{u}}\|_{L^2}^{\frac{3p-6}{2p}}
\|P-\bar{P}\|_{L^2}^\frac{6-p}{2p}+\|P-\bar{P}\|_{L^p}\Big)
+C\|\rho\dot{\mathbf{u}}\|_{L^2}^{\frac{3p-6}{2p}}
\|\nabla\mathbf{u}\|_{L^2}^\frac{6-p}{2p}+C\|\nabla \mathbf{u}\|_{L^2}.
\end{gather}
\end{lemma}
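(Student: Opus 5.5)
The starting point is to rewrite the momentum equation with the effective viscous flux and the vorticity. Using $-\Delta\mathbf{u}=\nabla\times\curl\mathbf{u}-\nabla\divv\mathbf{u}$, the system becomes
\begin{equation*}
\rho\dot{\mathbf{u}}=\nabla F-\mu\nabla\times\curl\mathbf{u}.
\end{equation*}
The plan is to read this relation in two ways and then combine it with Lemmas \ref{l2.3}, \ref{l2.4} and \ref{l2.7}.

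\emph{Estimate \eqref{E1}.} Taking the divergence gives $\Delta F=\divv(\rho\dot{\mathbf{u}})$ in $\Omega$. On $\partial\Omega$ we have $\curl\mathbf{u}\times\mathbf{n}=\mathbf{0}$, so $(\nabla\times\curl\mathbf{u})\cdot\mathbf{n}$ involves only tangential derivatives of the tangential trace of $\curl\mathbf{u}$, and hence vanishes. The boundary condition is therefore the Neumann condition $\partial_{\mathbf n}F=\rho\dot{\mathbf{u}}\cdot\mathbf{n}$. Standard $L^p$ theory for this weak Neumann problem, with $F$ determined up to a constant, gives $\|\nabla F\|_{L^p}\le C\|\rho\dot{\mathbf{u}}\|_{L^p}$.

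\emph{First part of \eqref{E2}.} Set $\mathbf{w}=\curl\mathbf{u}$. Then $\divv\mathbf{w}=0$, $\mu\curl\mathbf{w}=\nabla F-\rho\dot{\mathbf{u}}$, and $\mathbf{w}\times\mathbf{n}=\mathbf{0}$ on $\partial\Omega$. Lemma \ref{l2.7}(ii) with $k=0$ yields
\begin{equation*}
\|\nabla\mathbf{w}\|_{L^p}\le C\big(\|\rho\dot{\mathbf{u}}\|_{L^p}+\|\mathbf{w}\|_{L^p}\big),
\end{equation*}
and $\|\mathbf{w}\|_{L^p}\le C\|\nabla\mathbf{u}\|_{L^p}$.

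\emph{Second part of \eqref{E2}.} The field $\mathcal{P}\mathbf{u}$ satisfies $\divv\mathcal{P}\mathbf{u}=0$, $\mathcal{P}\mathbf{u}\cdot\mathbf{n}=0$ and $\curl\mathcal{P}\mathbf{u}=\curl\mathbf{u}$, because $\mathcal{Q}\mathbf{u}$ is a gradient. Lemma \ref{l2.7}(i) with $k=1$ then gives
\begin{equation*}
\|\nabla^2\mathcal{P}\mathbf{u}\|_{L^p}\le C\big(\|\curl\mathbf{u}\|_{W^{1,p}}+\|\mathcal{P}\mathbf{u}\|_{L^p}\big).
\end{equation*}
The lower-order term is controlled by $\|\nabla\mathbf{u}\|_{L^p}$ through $L^p$-boundedness of $\mathcal{P}$ and Poincaré's inequality (Lemma \ref{l2.3}).

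\emph{Estimate \eqref{E3}.} The key point is that $F$ has zero mean. Indeed $\int\divv\mathbf{u}\,\mathrm{d}\mathbf{x}=0$ since $\mathbf{u}\cdot\mathbf{n}=0$, and $P-\bar P$ has zero mean by definition. So Lemma \ref{l2.4}(i) applies with $C_2=0$:
\begin{equation*}
\|F\|_{L^p}\le C\|F\|_{L^2}^{\frac{6-p}{2p}}\|\nabla F\|_{L^2}^{\frac{3p-6}{2p}}.
\end{equation*}
Inserting \eqref{E1} with $p=2$ and $\|F\|_{L^2}\le(2\mu+\lambda)\|\divv\mathbf{u}\|_{L^2}+\|P-\bar P\|_{L^2}$ gives the claim.

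\emph{Estimate \eqref{E4}.} The vorticity $\curl\mathbf{u}$ has vanishing tangential trace, so it lies in $\mathcal{Z}(\Omega)$ and again $C_2=0$. Interpolating between $\|\curl\mathbf{u}\|_{L^2}\le C\|\nabla\mathbf{u}\|_{L^2}$ and $\|\nabla\curl\mathbf{u}\|_{L^2}$, with the latter bounded by \eqref{E2} at $p=2$, produces a cross term $\|\nabla\mathbf{u}\|_{L^2}$. This is harmless, since by Young's inequality $\|\nabla\mathbf{u}\|_{L^2}^{a}\,\|\nabla\mathbf{u}\|_{L^2}^{1-a}$-type mixtures are absorbed into $C\|\nabla\mathbf{u}\|_{L^2}$. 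For $\nabla\mathcal{P}\mathbf{u}$, we first use \eqref{HO1} to get $\|\nabla\mathcal{P}\mathbf{u}\|_{L^p}\le C(\|\curl\mathbf{u}\|_{L^p}+\|\mathcal{P}\mathbf{u}\|_{L^p})$. Then we bound $\|\mathcal{P}\mathbf{u}\|_{L^p}\le C\|\mathcal{P}\mathbf{u}\|_{H^1}\le C\|\nabla\mathbf{u}\|_{L^2}$ by Sobolev embedding for $p\le6$.

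\emph{Estimate \eqref{E5}.} Split $\mathbf{u}=\mathcal{P}\mathbf{u}+\mathcal{Q}\mathbf{u}$. The field $\mathcal{Q}\mathbf{u}$ is curl-free, has zero normal trace, and satisfies $\divv\mathcal{Q}\mathbf{u}=\divv\mathbf{u}=(F+P-\bar P)/(2\mu+\lambda)$. Applying Lemma \ref{l2.7}(i), with the lower-order term handled by Poincaré as before, gives
\begin{equation*}
\|\nabla\mathcal{Q}\mathbf{u}\|_{L^p}\le \frac{C}{2\mu+\lambda}\big(\|F\|_{L^p}+\|P-\bar P\|_{L^p}\big).
\end{equation*}
For $\|F\|_{L^p}$ we use the zero-mean Gagliardo--Nirenberg bound in the form $\|F\|_{L^2}^{\frac{6-p}{2p}}\|\rho\dot{\mathbf{u}}\|_{L^2}^{\frac{3p-6}{2p}}$ and split $\|F\|_{L^2}$. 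The pressure part gives exactly the first term of \eqref{E5}. The divergence part gives
\begin{equation*}
\frac{1}{2\mu+\lambda}\big((2\mu+\lambda)\|\divv\mathbf{u}\|_{L^2}\big)^{\frac{6-p}{2p}}\|\rho\dot{\mathbf{u}}\|_{L^2}^{\frac{3p-6}{2p}}\le \|\nabla\mathbf{u}\|_{L^2}^{\frac{6-p}{2p}}\|\rho\dot{\mathbf{u}}\|_{L^2}^{\frac{3p-6}{2p}},
\end{equation*}
using $(2\mu+\lambda)^{\frac{6-p}{2p}-1}\le C$ since $\frac{6-p}{2p}\le1$. This is where uniformity in $\lambda$ enters. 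Adding \eqref{E4} for $\nabla\mathcal{P}\mathbf{u}$ completes \eqref{E5}.

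\textbf{Main obstacle.} The delicate step is justifying the boundary conditions used in the elliptic problems, specifically $(\nabla\times\curl\mathbf{u})\cdot\mathbf{n}=0$ on the curved lateral boundary, so that \eqref{E1} has a clean Neumann condition. A second concern is that the annulus is not simply connected, so harmonic fields enter Lemma \ref{l2.7}. In every instance, though, the lower-order terms can be controlled by $\|\nabla\mathbf{u}\|_{L^p}$ or $\|\nabla\mathbf{u}\|_{L^2}$ via Lemma \ref{l2.3}.
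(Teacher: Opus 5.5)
For \eqref{E1}--\eqref{E4} you follow the paper's argument. You use the same Neumann problem for $F$, and you also justify $(\nabla\times\curl\mathbf u)\cdot\mathbf n=0$, which the paper leaves implicit. You use \eqref{HO4} for $\curl\mathbf u$ and the same zero-mean Gagliardo--Nirenberg interpolation. For the $\mathcal P\mathbf u$ terms, which the paper handles only by noting $\curl\mathcal P\mathbf u=\curl\mathbf u$, you use \eqref{HO1} and keep its lower-order term.

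The genuine difference is \eqref{E5}. The paper applies \eqref{HO3} to $\mathbf u$ itself, while you split $\mathbf u=\mathcal P\mathbf u+\mathcal Q\mathbf u$ and reuse \eqref{E4} for the solenoidal part. Your split is the sturdier route. \eqref{HO3} is stated only for simply connected domains, and $\Omega$ is not one. Indeed $\mathbf e_\theta/r$ is divergence-free, curl-free, satisfies \eqref{1.12}, and has nonzero gradient, so $\|\nabla\mathbf u\|_{L^p}\le C(\|\divv\mathbf u\|_{L^p}+\|\curl\mathbf u\|_{L^p})$ fails on $\Omega$. In your decomposition such harmonic components sit in $\mathcal P\mathbf u$, where the term $C\|\nabla\mathbf u\|_{L^2}$ of \eqref{E4} absorbs them.

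Two steps still need repair, and both are easy.

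\emph{The bound for $\mathcal Q\mathbf u$.} \eqref{HO1} plus ``Poincar\'e as before'' does not give the remainder-free bound $\|\nabla\mathcal Q\mathbf u\|_{L^p}\le\frac{C}{2\mu+\lambda}(\|F\|_{L^p}+\|P-\bar P\|_{L^p})$. Poincar\'e turns $\|\mathcal Q\mathbf u\|_{L^p}$ back into $\|\nabla\mathcal Q\mathbf u\|_{L^p}$ or $\|\nabla\mathbf u\|_{L^p}$, which is circular for \eqref{E5}. Instead, write $\mathcal Q\mathbf u=\nabla\phi$ with $\Delta\phi=\divv\mathbf u$ and $\partial_{\mathbf n}\phi=0$. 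The $W^{2,p}$ Neumann estimate then gives $\|\nabla\mathcal Q\mathbf u\|_{L^p}=\|\nabla^2\phi\|_{L^p}\le C\|\divv\mathbf u\|_{L^p}$. Alternatively, keep the remainder and bound it by $C\|\mathcal Q\mathbf u\|_{H^1}\le C\|\nabla\mathbf u\|_{L^2}$, which \eqref{E5} tolerates.

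\emph{Poincar\'e on this domain.} Lemma \ref{l2.3} fails on this $z$-periodic domain for the constant tangent field $\mathbf e_z$. Hence your bounds $\|\mathcal P\mathbf u\|_{L^p}\le C\|\nabla\mathbf u\|_{L^p}$ and $\|\mathcal P\mathbf u\|_{H^1}\le C\|\nabla\mathbf u\|_{L^2}$ are false as written. However, $\mathcal P\mathbf e_z=\mathbf e_z$, and neither \eqref{E2} nor \eqref{E4} sees constants. So run the same argument on $\mathcal P\mathbf u-\overline{u^3}\,\mathbf e_z$: it is tangent with zero mean $z$-component, and Poincar\'e does hold for such fields.
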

\begin{proof}
From the momentum equation $\eqref{a1}_2$ and the slip boundary condition \eqref{a3}, we obtain that
\begin{equation*}
\begin{cases}
\Delta F=\divv(\rho\dot{\mathbf{u}}), & \mathbf{x}\in\Omega,\\
\partial_{\mathbf n}F=\rho\dot{\mathbf{u}}\cdot\mathbf n, & \mathbf{x}\in\partial\Omega.
\end{cases}
\end{equation*}
By \cite[Lemma 4.27]{NS04}, which follows from the $L^p$-theory of elliptic equations, one has \eqref{E1}.

Next, using the vector identity $\Delta\mathbf{u}=\nabla\divv\mathbf{u}-\nabla\times\curl\mathbf{u}$,
we deduce from $\eqref{a1}_2$ that
\begin{equation*}
\mu\nabla\times\curl\mathbf{u}=\nabla F-\rho\dot{\mathbf{u}}.
\end{equation*}
Since $(\curl\mathbf{u}\times\mathbf{n})|_{\partial\Omega}=\mathbf{0}$ and $\divv(\curl\mathbf{u})=0$, a direct application of the Hodge-type estimate \eqref{HO4} gives
\begin{equation*}
\|\nabla\curl\mathbf{u}\|_{L^p}
\le C\big(\|\nabla\times\curl\mathbf{u}\|_{L^p}+\|\curl\mathbf{u}\|_{L^p}\big)
\le C\big(\|\rho\dot{\mathbf{u}}\|_{L^p}+\|\nabla\mathbf{u}\|_{L^p}\big),
\end{equation*}
which combined with $\nabla\times\mathbf{u}=\nabla\times(\mathcal{P}\mathbf{u})$ yields \eqref{E2}.

Recalling the definition of $F$ in \eqref{1.8} and noting that $\bar{F}=0$, it follows from Gagliardo--Nirenberg inequality \eqref{GN-3D-1} that
\begin{equation*}
\|F\|_{L^p}\le C\|F\|_{L^2}^\frac{6-p}{2p}\|\nabla F\|_{L^2}^{\frac{3p-6}{2p}}\le C\big((2\mu+\lambda)\|\divv\mathbf{u}\|_{L^2}+\|P-\bar{P}\|_{L^2}\big)^\frac{6-p}{2p}
\|\rho\dot{\mathbf{u}}\|_{L^2}^{\frac{3p-6}{2p}},
\end{equation*}
as the desired \eqref{E3}. Similarly, we deduce from \eqref{E2} that
\begin{equation*}
\|\curl \mathbf{u}\|_{L^p}\le
C\|\curl \mathbf{u}\|_{L^2}^\frac{6-p}{2p}\|\nabla\curl \mathbf{u}\|_{L^2}^{\frac{3p-6}{2p}}+C\|\curl \mathbf{u}\|_{L^2}\leq
C\|\nabla\mathbf{u}\|_{L^2}^\frac{6-p}{2p}
\|\rho\dot{\mathbf{u}}\|_{L^2}^{\frac{3p-6}{2p}}+C\|\nabla \mathbf{u}\|_{L^2}.
\end{equation*}
Then, by the Hodge-type estimate \eqref{HO3}, one arrives at
\begin{equation*}
\|\nabla\mathbf{u}\|_{L^p}
\le C\big(\|\divv\mathbf{u}\|_{L^p}+\|\curl\mathbf{u}\|_{L^p}\big)
\le \frac{C}{2\mu+\lambda}\big(\|F\|_{L^p}+\|P-\bar{P}\|_{L^p}\big)
+C\|\curl\mathbf{u}\|_{L^p},
\end{equation*}
which combined with \eqref{E3} and \eqref{E4} implies \eqref{E5}.
Here we have used the fact that $2\mu+\lambda>\mu>0$.
\end{proof}

By virtue of the Poincar\'{e} inequality and the boundary condition \eqref{a3}, we derive the following estimates for the material derivative $\dot{\mathbf u}$.

\begin{lemma}\label{l2.10}
Let $(\rho,\mathbf{u})$ be a smooth solution to the problem \eqref{a1}--\eqref{a4}.
Then there exists a constant $C=C(\Omega)>0$ such that
\begin{gather}\label{2.20}
\|\dot{\mathbf{u}}\|_{L^6}\leq C\big(\|\nabla\dot{\mathbf{u}}\|_{L^2}+\|\nabla \mathbf{u}\|_{L^2}^2\big),\\
\|\nabla\dot{\mathbf{u}}\|_{L^2}\leq C\big(\|\divv\dot{\mathbf{u}}\|_{L^2}+\|\curl \dot{\mathbf{u}}\|_{L^2}+\|\nabla \mathbf{u}\|_{L^4}^2\big).\label{2.21}
\end{gather}
\end{lemma}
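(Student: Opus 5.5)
The plan is to reduce both estimates to the impermeability condition $\mathbf u\cdot\mathbf n=0$ on $\partial\Omega$. This condition does not pass to $\dot{\mathbf u}$ directly, but it gives an explicit formula for $\dot{\mathbf u}\cdot\mathbf n$ on the boundary.

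\emph{Step 1: normal trace of $\dot{\mathbf u}$.} Extend $\mathbf n$ smoothly into $\overline\Omega$; here one can take $\mathbf n=\pm x_h/r$ near $\{r=2\}$ and $\{r=1\}$ respectively, with $x_h=(x_1,x_2,0)$, glued by a smooth cutoff in $r$. Since $\mathbf u\cdot\mathbf n=0$ on $\partial\Omega$ for all $t$, we get $\mathbf u_t\cdot\mathbf n=0$ there. Since $\mathbf u$ is tangent to $\partial\Omega$, differentiating $\mathbf u\cdot\mathbf n=0$ along $\mathbf u$ gives $(\mathbf u\cdot\nabla\mathbf u)\cdot\mathbf n=-\mathbf u\cdot\nabla\mathbf n\cdot\mathbf u$ on $\partial\Omega$. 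Hence
\begin{equation*}
\dot{\mathbf u}\cdot\mathbf n=-\mathbf u\cdot\nabla\mathbf n\cdot\mathbf u\quad\text{on }\partial\Omega .
\end{equation*}

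\emph{Step 2: proof of \eqref{2.21}.} Set $\mathbf w\triangleq\dot{\mathbf u}+(\mathbf u\cdot\nabla\mathbf n\cdot\mathbf u)\mathbf n$, so that $\mathbf w\cdot\mathbf n=0$ on $\partial\Omega$.
\begin{itemize}
\item By the Hodge estimate \eqref{HO3}, $\|\nabla\mathbf w\|_{L^2}\le C(\|\divv\mathbf w\|_{L^2}+\|\curl\mathbf w\|_{L^2})$.
\item The correction $(\mathbf u\cdot\nabla\mathbf n\cdot\mathbf u)\mathbf n$ and its first derivatives are pointwise bounded by $C(|\mathbf u||\nabla\mathbf u|+|\mathbf u|^2)$, since $\mathbf n$ is smooth.
\item Therefore $\|\nabla\dot{\mathbf u}\|_{L^2}\le C(\|\divv\dot{\mathbf u}\|_{L^2}+\|\curl\dot{\mathbf u}\|_{L^2})+C(\|\mathbf u\|_{L^4}\|\nabla\mathbf u\|_{L^4}+\|\mathbf u\|_{L^4}^2)$.
\item Lemma \ref{l2.3} with $p=4$ (valid since $\mathbf u\cdot\mathbf n=0$) gives $\|\mathbf u\|_{L^4}\le C\|\nabla\mathbf u\|_{L^4}$, which yields \eqref{2.21}.
\end{itemize}

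\emph{Step 3: proof of \eqref{2.20}.} Using $\mathbf w$ again would require $\|\mathbf u\|_{L^{12}}^2$, which is not controlled by $\|\nabla\mathbf u\|_{L^2}^2$. Instead I would use a Poincaré–Sobolev inequality with a boundary term:
\begin{equation*}
\|\mathbf v\|_{L^6}\le C\|\mathbf v\|_{H^1}\le C\big(\|\nabla\mathbf v\|_{L^2}+\|\mathbf v\cdot\mathbf n\|_{L^1(\partial\Omega)}\big).
\end{equation*}
This follows by the standard compactness (contradiction) argument, using the same non-degeneracy that underlies Lemma \ref{l2.3}: a constant field with vanishing normal trace must be zero. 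Apply it with $\mathbf v=\dot{\mathbf u}$. By Step 1, $\|\dot{\mathbf u}\cdot\mathbf n\|_{L^1(\partial\Omega)}\le C\int_{\partial\Omega}|\mathbf u|^2\,\mathrm dS$. The trace theorem $W^{1,1}(\Omega)\hookrightarrow L^1(\partial\Omega)$ applied to $|\mathbf u|^2$ gives
\begin{equation*}
\int_{\partial\Omega}|\mathbf u|^2\,\mathrm dS\le C\big(\|\mathbf u\|_{L^2}^2+\|\mathbf u\|_{L^2}\|\nabla\mathbf u\|_{L^2}\big)\le C\|\nabla\mathbf u\|_{L^2}^2 ,
\end{equation*}
where the last step uses Lemma \ref{l2.3} with $p=2$. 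This gives \eqref{2.20}.

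\emph{Main obstacle.} The delicate point is the Poincaré-type inequality with the boundary normal trace in Step 3. In the periodic annular cylinder, the constant field $\mathbf e_z$ is tangent to $\partial\Omega$, so this step depends on precisely the non-degeneracy encoded in Lemma \ref{l2.3}, which I take as given from the paper. If that had to be avoided, I would first try a sharper trace estimate, namely $|\mathbf u|^2\in W^{1,1}$ with trace bounded by $\|\mathbf u\|_{L^2}\|\mathbf u\|_{H^1}$. I would combine it with Lemma \ref{l2.3} applied to the axial average, and control the mean of $\dot u^z$ separately.
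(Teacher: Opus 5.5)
Your Step 2 is essentially the paper's proof of \eqref{2.21}. The paper subtracts $(\mathbf u\cdot\nabla\mathbf n)\times\mathbf u^\perp$ with $\mathbf u^\perp=-\mathbf u\times\mathbf n$, see \eqref{2.24}. This has the same normal trace as your simpler correction $(\mathbf u\cdot\nabla\mathbf n\cdot\mathbf u)\mathbf n$. The paper then applies the Hodge estimate and, implicitly, Lemma \ref{l2.3} at $p=4$, as you do.

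Your Step 3 reaches \eqref{2.20} by a genuinely different route. The paper does reuse the corrected field, but it applies Lemma \ref{l2.3} in $L^{3/2}$ rather than at the $L^2$ level. At that exponent the correction is affordable: its $W^{1,3/2}$-norm is bounded by $C(\|\mathbf u\|_{L^3}^2+\|\mathbf u\|_{L^6}\|\nabla\mathbf u\|_{L^2})\le C\|\nabla\mathbf u\|_{L^2}^2$. One then returns to $L^6$ through $W^{1,3/2}\hookrightarrow L^3$ and $H^1\hookrightarrow L^6$. So your objection that reusing $\mathbf w$ ``would require $\|\mathbf u\|_{L^{12}}^2$'' only holds at the $L^2$ level; lowering the exponent is exactly what rescues it.

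Your alternative is equally valid: a Friedrichs-type inequality with an $L^1(\partial\Omega)$ normal-trace penalty, proved by compactness, combined with the $W^{1,1}$-trace of $|\mathbf u|^2$. It needs no interior extension of the correction. It even gives the slightly sharper boundary bound $C\|\mathbf u\|_{L^2}\|\mathbf u\|_{H^1}$. It also isolates the single geometric input, namely that a constant field tangent to $\partial\Omega$ must vanish. The paper's version instead handles both estimates with one corrected field and uses nothing beyond Lemma \ref{l2.3} as stated.

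The caveat you raise is real, but the paper shares it, so it is not a gap of your proof relative to the paper. In $\Omega=\{1<r<2\}\times\mathbb T_L$ the field $\mathbf e_z$ violates Lemma \ref{l2.3}. The paper's $L^{3/2}$ step relies on that lemma exactly as your compactness step does. Moreover, the paper proves \eqref{2.21} via \eqref{HO2}, which it states only for simply connected domains, and this $\Omega$ is not simply connected.

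Your Step 2 carries the same dependence. Besides Lemma \ref{l2.3} at $p=4$, it uses \eqref{HO3}, which fails here for the axisymmetric field $\mathbf e_\theta/r=\nabla\theta$. That field is divergence-free, curl-free and tangent to $\partial\Omega$, yet has nonzero gradient. Concretely:
\begin{itemize}
\item $\mathbf u=t\,\mathbf e_z$ satisfies the boundary conditions and gives $\dot{\mathbf u}=\mathbf e_z$ with $\nabla\dot{\mathbf u}=0$ and $\nabla\mathbf u=0$, so it violates \eqref{2.20};
\item $\mathbf u=t\,\mathbf e_\theta/r$ at $t=0$ gives $\nabla\mathbf u=0$ and $\dot{\mathbf u}=\mathbf e_\theta/r$, whose divergence and curl vanish while its gradient does not, so it violates \eqref{2.21}.
\end{itemize}
Thus both inequalities fail as purely functional inequalities on this $\Omega$. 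Your fallback of controlling the mean of $\dot u^z$ would therefore repair at most \eqref{2.20}. Any genuine repair would have to exploit the equations, for instance the identity $\int\rho\dot u^z\,\mathrm d\mathbf x=0$ coming from the momentum equation and the slip conditions. For \eqref{2.21} it would also have to address the $\mathbf e_\theta/r$ direction.
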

\begin{proof}
We first recall the slip boundary conditions \eqref{a3}. Set
\begin{equation*}
\mathbf u^\perp\triangleq-\mathbf u\times \mathbf n \quad \text{on }\partial\Omega.
\end{equation*}
Since $(\mathbf u\cdot \mathbf n)|_{\partial\Omega}=0$, the vector field $\mathbf u$ is tangent to $\partial\Omega$. Hence $\mathbf u^\perp$ is precisely the vector obtained from $\mathbf u$ by a rotation of angle $\pi/2$ in the tangent plane $T(\partial\Omega)$. Taking the cross product with $\mathbf{n}$ again gives
\begin{equation}\label{2.22}
\mathbf u^\perp\times\mathbf n
=\mathbf u \quad \text{on }\partial\Omega.
\end{equation}
On the other hand, noting that $|\mathbf{n}|=1$, differentiating the condition $(\mathbf u\cdot \mathbf n)|_{\partial\Omega}=0$ along the flow yields
\begin{equation}\label{2.23}
(\partial_t+\mathbf u\cdot\nabla)(\mathbf u\cdot\mathbf n)=
\dot{\mathbf u}\cdot\mathbf n+(\mathbf u\cdot\nabla\mathbf n)\cdot\mathbf u=0
\quad \text{on }\partial\Omega.
\end{equation}
Using the identity $\mathbf a\cdot(\mathbf b\times\mathbf c)=(\mathbf a\times\mathbf b)\cdot\mathbf c$, we further obtain
\begin{equation*}
\dot{\mathbf u}\cdot\mathbf n
=-\mathbf u\cdot\nabla\mathbf n\cdot(\mathbf u^\perp\times\mathbf n)
=-(\mathbf u\cdot\nabla\mathbf n)\times\mathbf u^\perp\cdot\mathbf n
\quad \text{on }\partial\Omega,
\end{equation*}
i.e.,
\begin{equation}\label{2.24}
\big(\dot{\mathbf u}+(\mathbf u\cdot\nabla\mathbf n)\times\mathbf u^\perp\big)\cdot\mathbf n=0
\quad \text{on }\partial\Omega.
\end{equation}

It follows from Poincar\'e's inequality that
\begin{equation*}
\big\|\dot{\mathbf u}+(\mathbf u\cdot\nabla\mathbf n)\times\mathbf u^\perp\big\|_{L^{\frac32}}
\le
C\big\|\nabla\big(\dot{\mathbf u}+(\mathbf u\cdot\nabla\mathbf n)\times\mathbf u^\perp\big)\big\|_{L^{\frac32}},
\end{equation*}
which implies
\begin{equation}\label{2.25}
\|\dot{\mathbf u}\|_{L^{\frac32}}
\le
C\Big(\|\nabla\dot{\mathbf u}\|_{L^{\frac32}}+\|\nabla\mathbf u\|_{L^2}^2\Big).
\end{equation}
Then one gets from Sobolev's inequality that
\begin{equation*}
\|\dot{\mathbf u}\|_{L^6}
\le
C\big(\|\nabla\dot{\mathbf u}\|_{L^2}+\|\dot{\mathbf u}\|_{L^3}\big)
\le
C\Big(\|\nabla\dot{\mathbf u}\|_{L^2}+\|\nabla\dot{\mathbf u}\|_{L^\frac32}
+\|\dot{\mathbf u}\|_{L^\frac32}\Big),
\end{equation*}
which together with \eqref{2.25} gives \eqref{2.20}.

Combining \eqref{2.24} with the Hodge-type estimate \eqref{HO2}, we derive that
\begin{align*}
\|\nabla\dot{\mathbf u}\|_{L^2}
&\le
C\Big(\|\divv\dot{\mathbf u}\|_{L^2}+\|\curl\dot{\mathbf u}\|_{L^2}
+\big\|\nabla\big((\mathbf u\cdot\nabla\mathbf n)\times\mathbf u^\perp\big)\big\|_{L^2}\Big)\\
&\le
C\big(\|\divv\dot{\mathbf u}\|_{L^2}+\|\curl\dot{\mathbf u}\|_{L^2}
+\|\nabla\mathbf u\|_{L^4}^2\big),
\end{align*}
as the desired \eqref{2.21}.
\end{proof}

\section{\textit{A priori} estimates}\label{sec3}

In this section we establish several key {\it a priori} estimates for strong solutions obtained in Lemma \ref{l2.1} to the problem \eqref{a1}--\eqref{a4}.
These estimates are independent of the bulk viscosity $\lambda$, the lower bound of $\rho$, the initial regularity, and the time of existence.
More precisely, for any fixed $T>0$, let $(\rho,\mathbf{u})$ be the strong solution to \eqref{a1}--\eqref{a4} in $\Omega\times(0,T]$.

The next proposition provides the key bootstrap improvement.

\begin{proposition}\label{p3.1}
Assume that the conditions in Theorem \ref{t1.1} hold.
Let $(\rho, \mathbf{u})$ be the strong solution to the initial-boundary value problem \eqref{a1}--\eqref{a4} satisfying
\begin{align}\label{3.1}
\sup_{\Omega\times[0,T]}\rho\le2\hat{\rho},\quad \frac{1}{(2\mu+\lambda)^2}\int_{0}^{T}
\|P-\bar{P}\|_{L^4}^4\mathrm{d}t\le2,
\end{align}
then it holds that
\begin{align}\label{3.2}
\sup_{\Omega\times[0,T]}\rho\le\frac{7}{4}\hat{\rho},\quad \frac{1}{(2\mu+\lambda)^2}\int_{0}^{T}
\|P-\bar{P}\|_{L^4}^4\mathrm{d}t\le1.
\end{align}
\end{proposition}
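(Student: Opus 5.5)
Under the bootstrap hypotheses \eqref{3.1}, the plan is a chain of estimates whose constants depend only on $\hat\rho,a,\gamma,\mu,\Omega$ and on $C_0,M$, never on $\lambda$ or $T$; each ends in a quantity that \eqref{1.18} makes small.

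\emph{Step 1 (energy and first-order bounds).} The basic energy law gives $\sup_t\int(\tfrac12\rho|\mathbf u|^2+G(\rho))\,\mathrm d\mathbf x\le C_0$ and $\int_0^T[(2\mu+\lambda)\|\divv\mathbf u\|_{L^2}^2+\mu\|\curl\mathbf u\|_{L^2}^2]\,\mathrm dt\le C_0$. Since $\rho\le 2\hat\rho$, we also get $\|P-\bar P\|_{L^2}^2\le CC_0$.

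\emph{Step 2 (bound on \eqref{1.24}).} Multiply $\eqref{a1}_2$ by $\dot{\mathbf u}$. This controls $A(t)\triangleq(2\mu+\lambda)\|\divv\mathbf u\|_{L^2}^2+\mu\|\curl\mathbf u\|_{L^2}^2$ together with $\int_0^T\|\sqrt\rho\dot{\mathbf u}\|_{L^2}^2\,\mathrm dt$. The hard term is $\int\rho\dot{\mathbf u}\cdot(\mathbf u\cdot\nabla)\mathbf u\,\mathrm d\mathbf x$, which I bound by $\|\sqrt\rho\dot{\mathbf u}\|_{L^2}\|\sqrt\rho\mathbf u\|_{L^4}\|\nabla\mathbf u\|_{L^4}$.
\begin{itemize}
\item For $\|\sqrt\rho\mathbf u\|_{L^4}$, reduce to the meridional domain $\Gamma$, where $r\in(1,2)$ makes the weights harmless, and apply Lemma \ref{l2.6}.
\item For $\|\nabla\mathbf u\|_{L^4}$, split $\mathbf u=\mathcal P\mathbf u+\mathcal Q\mathbf u$. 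Treat $\mathcal P\mathbf u$ by Lemma \ref{l2.5}(ii) combined with \eqref{E2}, and the compressible part by \eqref{E3}/\eqref{E5}.
\end{itemize}
The pressure contributions carry the factor $(2\mu+\lambda)^{-1}$. After Young's inequality they are dominated by $(2\mu+\lambda)^{-2}\|P-\bar P\|_{L^4}^4$, whose time integral is at most $2$ by \eqref{3.1}. The outcome is the logarithmic inequality $\frac{\mathrm d}{\mathrm dt}A\le C\,\|\nabla\mathbf u\|_{L^2}^2\ln(2+A)\,A+\dots$. A log-Gronwall argument then gives $\sup_t A\le M^{\exp\{C(1+C_0)^2\}}\triangleq M_1$, which explains the double exponential in \eqref{1.18}.

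\emph{Step 3 (Hoff-type weighted estimates).} Apply $\dot{}$ to the momentum equation in the form $\rho\dot{\mathbf u}-(2\mu+\lambda)\nabla\divv\mathbf u+\mu\nabla\times\curl\mathbf u+\nabla P=0$, and test with $\sigma\dot{\mathbf u}$. The goal is to bound $\sup_t\sigma\|\sqrt\rho\dot{\mathbf u}\|_{L^2}^2+\int_0^T\sigma\|\nabla\dot{\mathbf u}\|_{L^2}^2\,\mathrm dt$ by a power of $M_1$. Key choices:
\begin{itemize}
\item Use $(2\mu+\lambda)\sigma\|\divf\mathbf u\|_{L^2}^2$ rather than $\|\divv\dot{\mathbf u}\|_{L^2}^2$ as the dissipated quantity.
\item Absorb the boundary terms through $F$, using Lemma \ref{l2.10} to handle $\dot{\mathbf u}\cdot\mathbf n$.
\item For the self-convection of $\mathcal P\mathbf u$, move the time derivative using $\divv\mathcal P\mathbf u=0$, instead of integrating by parts.
\end{itemize}
I expect this step to be the main obstacle: keeping every constant $\lambda$-independent while the boundary terms involve $(2\mu+\lambda)\divv\mathbf u$.

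\emph{Step 4 (closing the two bootstrap bounds).} For the $L^4$ bound, use the renormalized continuity equation with $P_t+\mathbf u\cdot\nabla P+\gamma P\divv\mathbf u=0$ and $(2\mu+\lambda)\divv\mathbf u=F+P-\bar P$. Multiplying by $(P-\bar P)^3$ gives $\frac{\gamma}{2\mu+\lambda}\int(P-\bar P)^4\,\mathrm d\mathbf x$ as dissipation plus the energy derivative, up to errors of the form $(2\mu+\lambda)^{-1}\int|F||P-\bar P|^3\,\mathrm d\mathbf x$. Using Steps 2–3 these errors have size a power of $M_1$ over $2\mu+\lambda$. Hence $(2\mu+\lambda)^{-2}\int_0^T\|P-\bar P\|_{L^4}^4\,\mathrm dt\le C M_1^{k}/(2\mu+\lambda)\le1$ by \eqref{1.18}.

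For the density bound, write the equation along particle paths as $(2\mu+\lambda)\frac{\mathrm D}{\mathrm Dt}\ln\rho=-(P-\bar P)-F$. Where $\rho$ is large, $-(P-\bar P)$ is a negative, damping term. The $F$ contribution is bounded by $(2\mu+\lambda)^{-1}\int_0^T\|F\|_{L^\infty}\,\mathrm dt$. To control it, use \eqref{GN-3D-2} with \eqref{E1}, \eqref{E3} and the weighted bounds, splitting into $t\le1$ and $t\ge1$ via $\sigma$. This yields $\ln\frac{\rho}{\hat\rho}\le CM_1^k(2\mu+\lambda)^{-1/2}+(\text{damping})$, so $\rho\le\frac74\hat\rho$ once $\lambda$ satisfies \eqref{1.18}. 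Vacuum is handled by working with positive approximate densities and the Lagrangian argument of \cite{DE97}.
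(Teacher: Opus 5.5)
Your plan is essentially the paper's proof: Proposition \ref{p3.1} is obtained by chaining your Steps 1--4 exactly as Lemmas \ref{l3.1}--\ref{l3.5} do, with the same tools at each stage (the Desjardins inequality on $\Gamma$, Ladyzhenskaya for $\mathcal P\mathbf u$ and the effective flux for $\mathcal Q\mathbf u$ inside a log-Gronwall, the $\sigma$-weighted $\divf\mathbf u$ functional with boundary terms rewritten through $F$, the $L^4$ pressure renormalization, and the particle-path bound split at $t=1$). A few slips need fixing:
\begin{itemize}
\item In Step 2 the multiplier must be $\mathbf u_t$, not $\dot{\mathbf u}$. Testing with $\mathbf u_t$ is what produces your hard term $\int\rho\dot{\mathbf u}\cdot(\mathbf u\cdot\nabla)\mathbf u\,\mathrm d\mathbf x$, and because $\mathbf u_t\cdot\mathbf n=0$ it avoids the $(2\mu+\lambda)\divv\mathbf u$-weighted boundary and cubic terms that testing with $\dot{\mathbf u}$ would create.
\item The pressure equation should be tested with $3(P-\bar P)^2$ rather than the cube. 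This gives the dissipation $\frac{3\gamma-1}{2\mu+\lambda}\|P-\bar P\|_{L^4}^4$ against $\frac{\mathrm d}{\mathrm dt}\int(P-\bar P)^3\,\mathrm d\mathbf x$.
\item The Gronwall argument in Step 3 yields a bound of size $\exp\{M_1^k\}$, not a power of $M_1$. This step, not Step 2, is the source of the outermost exponential in \eqref{1.18}.
\item In the density bound for $t\ge1$ you must use the damping together with Young's inequality. This lets you work with the $T$-uniform quantity $\int_1^T\|F\|_{L^\infty}^2\,\mathrm dt$ instead of $\int_0^T\|F\|_{L^\infty}\,\mathrm dt$, which is not uniform in $T$.
\end{itemize}
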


Before proving Proposition \ref{p3.1}, we first derive several necessary \textit{a priori} estimates.
These estimates are collected in Lemmas \ref{l3.1}--\ref{l3.5} below.

We begin with the basic energy estimate for $(\rho, \mathbf{u})$.

\begin{lemma}\label{l3.1}
The following energy estimate holds
\begin{align}\label{3.3}
\sup_{0\le t\le T}
\int\left(\frac{1}{2}\rho|\mathbf{u}|^2+G(\rho)\right)\mathrm{d}\mathbf{x}
+\int_0^T\left[(2\mu+\lambda)\|\divv \mathbf{u}\|_{L^2}^2+\mu\|\curl \mathbf{u}\|_{L^2}^2\right]\mathrm{d}t\le C_0.
\end{align}
\end{lemma}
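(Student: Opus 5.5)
The plan is the standard energy identity: test the momentum equation with $\mathbf{u}$, test the mass equation with $G'(\rho)$, and exploit the slip boundary conditions \eqref{a3} so that no boundary terms survive. Since $(\rho,\mathbf{u})$ is a strong solution on $[0,T]$ with $\rho>0$ (Lemma \ref{l2.1}), all integrations by parts below are justified.

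\textbf{Step 1: rewrite the viscous term.} Using $\Delta\mathbf{u}=\nabla\divv\mathbf{u}-\nabla\times\curl\mathbf{u}$, we have
\begin{equation*}
\divv\mathbb{S}(\mathbf{u})=(2\mu+\lambda)\nabla\divv\mathbf{u}-\mu\nabla\times\curl\mathbf{u}.
\end{equation*}

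\textbf{Step 2: multiply the momentum equation by $\mathbf{u}$.} Using the mass equation, the convective terms combine into
\begin{equation*}
\frac{\mathrm{d}}{\mathrm{d}t}\int\frac12\rho|\mathbf{u}|^2\mathrm{d}\mathbf{x}.
\end{equation*}
The boundary flux $\int_{\partial\Omega}\frac12\rho|\mathbf{u}|^2\,\mathbf{u}\cdot\mathbf{n}\,\mathrm{d}S$ vanishes because $\mathbf{u}\cdot\mathbf{n}=0$. For the viscous part:
\begin{itemize}
\item $\int\nabla\divv\mathbf{u}\cdot\mathbf{u}\,\mathrm{d}\mathbf{x}=-\int(\divv\mathbf{u})^2\mathrm{d}\mathbf{x}$, since the boundary term contains $\mathbf{u}\cdot\mathbf{n}=0$.
\item $\int\nabla\times\curl\mathbf{u}\cdot\mathbf{u}\,\mathrm{d}\mathbf{x}=\int|\curl\mathbf{u}|^2\mathrm{d}\mathbf{x}+\int_{\partial\Omega}(\mathbf{n}\times\curl\mathbf{u})\cdot\mathbf{u}\,\mathrm{d}S$, and the boundary integral vanishes because $\curl\mathbf{u}\times\mathbf{n}=\mathbf{0}$.
\end{itemize}
The boundary vanishing here is the only point where the slip condition is really used, and it is where I would be most careful with signs.

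\textbf{Step 3: the pressure term.} Integrating by parts with $\mathbf{u}\cdot\mathbf{n}=0$ gives
\begin{equation*}
\int\nabla P\cdot\mathbf{u}\,\mathrm{d}\mathbf{x}=-\int(P-P(\bar\rho))\divv\mathbf{u}\,\mathrm{d}\mathbf{x}.
\end{equation*}
From the definition \eqref{1.5}, $\rho G'(\rho)-G(\rho)=P(\rho)-P(\bar\rho)$. The mass equation then yields
\begin{equation*}
G(\rho)_t+\divv(G(\rho)\mathbf{u})+(P-P(\bar\rho))\divv\mathbf{u}=0.
\end{equation*}
Integrating over $\Omega$ (the flux term vanishes again) shows that
\begin{equation*}
\frac{\mathrm{d}}{\mathrm{d}t}\int G(\rho)\,\mathrm{d}\mathbf{x}=-\int(P-P(\bar\rho))\divv\mathbf{u}\,\mathrm{d}\mathbf{x},
\end{equation*}
so this contribution exactly cancels the pressure term from Step 2.

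\textbf{Step 4: conclude.} Combining Steps 2 and 3 gives the energy equality
\begin{equation*}
\frac{\mathrm{d}}{\mathrm{d}t}\int\Big(\frac12\rho|\mathbf{u}|^2+G(\rho)\Big)\mathrm{d}\mathbf{x}+(2\mu+\lambda)\|\divv\mathbf{u}\|_{L^2}^2+\mu\|\curl\mathbf{u}\|_{L^2}^2=0.
\end{equation*}
Integrating over $(0,t)$ and taking the supremum over $t\in[0,T]$ gives \eqref{3.3}, because $G\ge0$ and the initial value of the energy is $C_0$. No step is a genuine obstacle.
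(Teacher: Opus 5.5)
Your proposal is correct and follows the paper's proof essentially verbatim: the paper also rewrites the momentum equation as \eqref{3.5}, tests it against $\mathbf{u}$, adds the integrated identity \eqref{3.4} for $G(\rho)$, and integrates the resulting energy equality \eqref{3.6} in time, and your explicit checks that the boundary terms vanish under \eqref{a3} fill in exactly what the paper leaves implicit. One small caveat applies to both arguments: the equality gives $\int\big(\frac12\rho|\mathbf{u}|^2+G(\rho)\big)(\cdot,t)\,\mathrm{d}\mathbf{x}+\int_0^t\big[(2\mu+\lambda)\|\divv\mathbf{u}\|_{L^2}^2+\mu\|\curl\mathbf{u}\|_{L^2}^2\big]\mathrm{d}\tau=C_0$ for each $t\in[0,T]$, so \eqref{3.3} holds literally only when read as the supremum of this combined quantity (bounding the supremum and the full time integral separately, as displayed, gives $2C_0$ via $G\ge0$), which is harmless since only constant multiples of $C_0$ enter later.
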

\begin{proof}
By \eqref{a1}$_1$ and the definition of $G(\rho)$ in \eqref{1.5}, one easily checks that
\begin{equation}\label{3.4}
\big(G(\rho)\big)_t+\divv \big(G(\rho)\mathbf{u}\big)+\big(P-P(\bar{\rho})\big)\divv \mathbf{u}=0.
\end{equation}

Using the identity $\Delta \mathbf{u}=\nabla\divv \mathbf{u}-\nabla\times\curl \mathbf{u}$, we rewrite \eqref{a1}$_2$ as
\begin{equation}\label{3.5}
\rho\mathbf{u}_t+\rho\mathbf{u}\cdot\nabla\mathbf{u}
-(2\mu+\lambda)\nabla\divv \mathbf{u}
+\mu\nabla\times\curl \mathbf{u}
+\nabla P=0.
\end{equation}
Integrating \eqref{3.4} over $\Omega$, testing \eqref{3.5} against
$\mathbf u$, and then adding the resulting identities, we obtain
\begin{align}\label{3.6}
\frac{\mathrm{d}}{\mathrm{d}t}\int \left(\frac{1}{2}\rho|\mathbf{u}|^2+G(\rho)\right)\mathrm{d}\mathbf{x}
+(2\mu+\lambda)\int(\divv \mathbf{u})^2\mathrm{d}\mathbf{x}
+\mu\int|\curl \mathbf{u}|^2\mathrm{d}\mathbf{x}=0.
\end{align}
Integrating \eqref{3.6} over $(0,T)$ yields \eqref{3.3}.
\end{proof}

The following lemma establishes uniform-in-time $L^2$-estimates for
$\divv \mathbf{u}$ and $\curl \mathbf{u}$.

\begin{lemma}\label{l3.2}
Let \eqref{3.1} be satisfied. Then there exists a positive constant $K_2$ depending only on $\hat{\rho}, a, \gamma, \mu$, and $\Omega$ such that
\begin{align}\label{3.7}
\sup_{0\le t\le T}
\big[(2\mu+\lambda)\|\divv \mathbf{u}\|_{L^2}^2
+\mu\|\curl\mathbf{u}\|_{L^2}^2\big]
+\int_0^T\|\sqrt{\rho} \dot{\mathbf{u}}\|_{L^2}^2\mathrm{d}t\le
M^{\exp\{2K_2(1+C_0)^2\}}
\end{align}
provided that $\lambda$ satisfies \eqref{1.18} with $K\geq K_2$.
\end{lemma}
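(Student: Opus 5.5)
We test the momentum equation \eqref{3.5} against $\dot{\mathbf u}=\mathbf u_t+\mathbf u\cdot\nabla\mathbf u$ and integrate over $\Omega$. The viscous terms produce the time derivative of $\frac12\big[(2\mu+\lambda)\|\divv\mathbf u\|_{L^2}^2+\mu\|\curl\mathbf u\|_{L^2}^2\big]$, with boundary terms vanishing thanks to \eqref{a3}. They also produce convective remainders of the form $(2\mu+\lambda)\int\divv\mathbf u\,\divv(\mathbf u\cdot\nabla\mathbf u)$ and $\mu\int\curl\mathbf u\cdot\curl(\mathbf u\cdot\nabla\mathbf u)$.

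For the divergence remainder, we integrate by parts and rewrite it through $F$ and $P-\bar P$. The pressure term is handled by writing $-\int\nabla P\cdot\dot{\mathbf u}=\frac{d}{dt}\int(P-\bar P)\divv\mathbf u+\dots$ and using $P_t+\divv(P\mathbf u)+(\gamma-1)P\divv\mathbf u=0$. The result is an identity of the form
\begin{equation*}
\frac{d}{dt}B(t)+\|\sqrt\rho\dot{\mathbf u}\|_{L^2}^2\le C\int|\nabla\mathbf u|^3\,\mathrm d\mathbf x+C\Big|\int\rho\dot{\mathbf u}\cdot(\mathbf u\cdot\nabla)\mathbf u\,\mathrm d\mathbf x\Big|+\dots,
\end{equation*}
where
\begin{equation*}
B(t)=\frac12\big[(2\mu+\lambda)\|\divv\mathbf u\|_{L^2}^2+\mu\|\curl\mathbf u\|_{L^2}^2\big]-\int(P-\bar P)\divv\mathbf u .
\end{equation*}
The boundary contributions (curvature terms from $\mathbf n$) are controlled by $\|\nabla\mathbf u\|_{L^2}^2$ or $\|\nabla\mathbf u\|_{L^3}^3$ via trace inequalities. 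The cross term in $B$ is bounded by $\frac{C}{2\mu+\lambda}\|P-\bar P\|_{L^2}^2+\frac14(2\mu+\lambda)\|\divv\mathbf u\|_{L^2}^2$, so $B$ is comparable to the quantity in \eqref{3.7} up to $C(\hat\rho)C_0$.

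The key nonlinear term is $\int\rho\dot{\mathbf u}\cdot(\mathbf u\cdot\nabla)\mathbf u$. We bound it by
\begin{equation*}
\|\sqrt\rho\dot{\mathbf u}\|_{L^2}\|\sqrt\rho\mathbf u\|_{L^4}\|\nabla\mathbf u\|_{L^4}.
\end{equation*}
\begin{enumerate}
\item Since $\rho,\mathbf u$ are axisymmetric and $r\in(1,2)$, the $L^p$ norms on $\Omega$ are equivalent to those on the convex meridional domain $\Gamma$ (a rectangle periodic in $z$). After the usual adaptation, Lemma \ref{l2.6} then gives $\|\sqrt\rho\mathbf u\|_{L^4}^2\le C(1+C_0^{1/2})\|\nabla\mathbf u\|_{L^2}\sqrt{\ln(2+\|\nabla\mathbf u\|_{L^2}^2)}$.
\item We split $\nabla\mathbf u=\nabla\mathcal P\mathbf u+\nabla\mathcal Q\mathbf u$. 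For the divergence-free part, Lemma \ref{l2.5}(ii) applied to $\nabla\mathcal P\mathbf u$ together with \eqref{E2} gives $\|\nabla\mathcal P\mathbf u\|_{L^4}^2\le C\|\nabla\mathbf u\|_{L^2}(\|\rho\dot{\mathbf u}\|_{L^2}+\|\nabla\mathbf u\|_{L^2})$. This is the two-dimensional gain that replaces the $3/4$ power from \eqref{E4}.
\item For the compressible part, $\|\nabla\mathcal Q\mathbf u\|_{L^4}\le C\|\divv\mathbf u\|_{L^4}\le\frac{C}{2\mu+\lambda}(\|F\|_{L^4}+\|P-\bar P\|_{L^4})$, and $\|F\|_{L^4}$ is controlled through \eqref{E3}.
\end{enumerate}
After Young's inequality, the $\sqrt\rho\dot{\mathbf u}$ factors are absorbed, leaving terms like $C(1+C_0)\|\nabla\mathbf u\|_{L^2}^4\ln(2+\|\nabla\mathbf u\|_{L^2}^2)$. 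Since $\|\nabla\mathbf u\|_{L^2}^2\le C(B+C_0)$ and $\int_0^T\|\nabla\mathbf u\|_{L^2}^2\,dt\le C C_0$ by Lemma \ref{l3.1}, this becomes a logarithmic differential inequality
\begin{equation*}
\frac{d}{dt}\ln(e+B)\le C(1+C_0)\|\nabla\mathbf u\|_{L^2}^2\ln(e+B)+h(t).
\end{equation*}
The remainder $h$ collects the pressure terms, such as $\frac{1}{(2\mu+\lambda)^2}\|P-\bar P\|_{L^4}^4$, whose time integral is at most $2$ by \eqref{3.1}, and the $(2\mu+\lambda)^{-1}$-weighted cubic terms. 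Applying Gronwall to $\ln\ln$ yields $\ln(e+B)\le (\ln M+C)\exp\{C(1+C_0)^2\}$, hence $B\le M^{\exp\{2K_2(1+C_0)^2\}}$. Integrating the inequality once more bounds $\int_0^T\|\sqrt\rho\dot{\mathbf u}\|_{L^2}^2\,dt$.

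The main obstacle is closing the $\int|\nabla\mathbf u|^3$ term and the compressible contributions uniformly in $\lambda$ without smallness. For these I would use \eqref{E5} with the prefactor $(2\mu+\lambda)^{-1}$ and the condition \eqref{1.18}. That condition makes $\lambda$ so large that any factor $\lambda^{-1}$ times a power of the bootstrap quantity, which is itself at most $\lambda^{o(1)}$, is at most $1$. Concretely, I would first try to show every term without a logarithmic structure carries a factor $(2\mu+\lambda)^{-1/2}$ times a polynomial in $B$. On the bootstrap set $B\le M^{\exp\{2K_2(1+C_0)^2\}}\le\ln\lambda$, such terms are then bounded by a constant.
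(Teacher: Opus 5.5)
\textbf{There is a genuine gap, in the setup rather than in the core estimates.} Your steps 1--3 and the logarithmic Gronwall closing are exactly the paper's \eqref{3.10}--\eqref{3.17}. The trouble is the choice of test function: testing \eqref{3.5} against $\dot{\mathbf u}$ leaves the terms you defer to your last paragraph, and they are not controlled by the mechanism you propose there.

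Testing against $\dot{\mathbf u}$ does not produce the term $\int\rho\dot{\mathbf u}\cdot(\mathbf u\cdot\nabla)\mathbf u\,\mathrm d\mathbf x$. In fact, since $\rho\dot{\mathbf u}=\nabla F-\mu\nabla\times\curl\mathbf u$, the sum of all the remainders generated by the $\mathbf u\cdot\nabla\mathbf u$ part of your test function (viscous and pressure) is, up to sign, exactly that term. So your displayed inequality counts this interaction twice. If you bound the remainders separately, the weight $\rho$ that makes \eqref{log} usable is lost.

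Concretely, after writing $(2\mu+\lambda)\divv\mathbf u=F+P-\bar P$, the divergence remainder contains $\int F\,\partial_i(\mathcal P\mathbf u)^j\partial_j(\mathcal P\mathbf u)^i\,\mathrm d\mathbf x$. This term carries no factor $(2\mu+\lambda)^{-1/2}$, because the large viscosity has been absorbed into $F$. Hence your claim that every non-logarithmic term is $(2\mu+\lambda)^{-1/2}$ times a polynomial in $B$ is false. The same is true of the $\mathcal P$-part of the curl remainder, although that one does close, by the two-dimensional bound $\|\nabla\mathcal P\mathbf u\|_{L^3}^3\le C\|\nabla\mathbf u\|_{L^2}^2(\|\sqrt\rho\dot{\mathbf u}\|_{L^2}+\|\nabla\mathbf u\|_{L^2})$.

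For the $F$-term, the natural estimate (meridional reduction) is $\|F\|_{L^p}\le C_p\|\nabla F\|_{L^2}\le C_p\|\sqrt\rho\dot{\mathbf u}\|_{L^2}$. Combined with two-dimensional interpolation of $\nabla\mathcal P\mathbf u$ and Young's inequality, it leaves $\|\nabla\mathbf u\|_{L^2}^{4+4/(p-2)}$, i.e. $B'\lesssim\|\nabla\mathbf u\|_{L^2}^2B^{1+2/(p-2)}$. This is superlinear in $B$ and cannot be fed into your Gronwall argument without smallness. The bootstrap $B\le\ln\lambda$ does not help, since there is no negative power of $\lambda$ to spend. Rescuing it would need an extra endpoint argument tracking the $p$-dependence of $C_p$, which your proposal does not contain.

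\textbf{The fix is to test with $\mathbf u_t$, as in \eqref{3.8}.}
\begin{itemize}
\item Since $\mathbf u_t\cdot\mathbf n=0$ and $\curl\mathbf u\times\mathbf n=\mathbf 0$ on $\partial\Omega$, the viscous terms give exactly $\frac12\frac{\mathrm d}{\mathrm dt}\big[(2\mu+\lambda)\|\divv\mathbf u\|_{L^2}^2+\mu\|\curl\mathbf u\|_{L^2}^2\big]$. There are no cubic remainders and no boundary curvature terms.
\item The identity $\int\rho\dot{\mathbf u}\cdot\mathbf u_t=\|\sqrt\rho\dot{\mathbf u}\|_{L^2}^2-\int\rho\dot{\mathbf u}\cdot(\mathbf u\cdot\nabla)\mathbf u$ produces a single nonlinear term with the weight $\rho$ intact. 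Your steps 1--3 handle it as written.
\item The pressure contributes only $\int P\divv\mathbf u_t$, treated as in \eqref{3.9}. Move the time derivative out, use \eqref{3.20}, write $\mathbf u\cdot\nabla\divv\mathbf u=(2\mu+\lambda)^{-1}\mathbf u\cdot\nabla(F+P-\bar P)$, and apply \eqref{E1}. This yields only $\frac14\|\sqrt\rho\dot{\mathbf u}\|_{L^2}^2+C\|\nabla\mathbf u\|_{L^2}^2+C(2\mu+\lambda)^{-2}\|P-\bar P\|_{L^4}^4$, whose time integral is controlled by \eqref{3.1} and \eqref{3.3}.
\end{itemize}
With this change no extra bootstrap on $B$ is needed, and your logarithmic Gronwall step concludes exactly as in \eqref{3.15}--\eqref{3.18}.
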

\begin{proof}
Multiplying \eqref{3.5} by $\mathbf{u}_t$ and integrating the resultant over $\Omega$, we have
\begin{equation}\label{3.8}
\frac{1}{2}\frac{\mathrm{d}}{\mathrm{d}t}
\int\big[(2\mu+\lambda)(\divv\mathbf{u})^2+\mu|\curl \mathbf{u}|^2\big]\mathrm{d}\mathbf{x}
+\int\rho |\dot{\mathbf{u}}|^2\mathrm{d}\mathbf{x}
=\int P\divv\mathbf{u}_t\mathrm{d}\mathbf{x}
+\int\rho \dot{\mathbf{u}}\cdot(\mathbf{u}\cdot\nabla)\mathbf{u}\mathrm{d}\mathbf{x}.
\end{equation}
It follows from $\eqref{a1}_1$, \eqref{E1}, H\"older's inequality, and the Cauchy--Schwarz inequality that
\begin{align}\label{3.9}
&\int P\divv\mathbf{u}_t\mathrm{d}\mathbf{x}\notag\\
&=\frac{\mathrm{d}}{\mathrm{d}t}\int P\divv\mathbf{u} \mathrm{d}\mathbf{x}-\int\divv\mathbf{u}P'(\rho)\rho_t\mathrm{d}\mathbf{x}\notag\\
&=\frac{\mathrm{d}}{\mathrm{d}t}\int P\divv\mathbf{u}\mathrm{d}\mathbf{x}+
\int(\divv\mathbf{u})^2P'(\rho)\rho\mathrm{d}\mathbf{x}+
\int\mathbf{u}\cdot\nabla\big(P-\bar{P}\big)\divv\mathbf{u}\mathrm{d}\mathbf{x}\notag\\
&=\frac{\mathrm{d}}{\mathrm{d}t}\int P\divv\mathbf{u}\mathrm{d}\mathbf{x}+
\int(\divv\mathbf{u})^2\big(P'(\rho)\rho-P+\bar{P}\big)\mathrm{d}\mathbf{x}
-\frac{1}{2\mu+\lambda}
\int (P-\bar{P})\mathbf{u}\cdot\nabla\big(F+P-\bar{P}\big)\mathrm{d}\mathbf{x}\notag\\
&=\frac{\mathrm{d}}{\mathrm{d}t}\int P\divv\mathbf{u}\mathrm{d}\mathbf{x}+
\int(\divv\mathbf{u})^2\big(P'(\rho)\rho-P+\bar{P}\big)\mathrm{d}\mathbf{x}
-\frac{1}{2\mu+\lambda}
\int \big(P-\bar{P}\big)\mathbf{u}\cdot\nabla F\mathrm{d}\mathbf{x}\notag\\
&\quad+\frac{1}{2(2\mu+\lambda)}
\int\big(P-\bar{P}\big)^2\divv\mathbf{u}\mathrm{d}\mathbf{x}\notag\\
&\leq \frac{\mathrm{d}}{\mathrm{d}t}\int P\divv\mathbf{u} \mathrm{d}\mathbf{x}
+C\|\divv \mathbf{u}\|_{L^2}^2+C\|P-\bar{P}\|_{L^\infty}\|\mathbf{u}\|_{L^2}\|\nabla F\|_{L^2}
+\frac{C}{(2\mu+\lambda)^2}\|P-\bar{P}\|_{L^4}^4\notag\\
&\leq \frac{\mathrm{d}}{\mathrm{d}t}\int P\divv\mathbf{u} \mathrm{d}\mathbf{x}
+\frac{1}{4}\|\sqrt{\rho}\dot{\mathbf{u}}\|_{L^2}^2+
C\|\nabla\mathbf{u}\|_{L^{2}}^{2}+\frac{C}{(2\mu+\lambda)^2}\|P-\bar{P}\|_{L^4}^4.
\end{align}

It remains to estimate the second term on the right-hand side of \eqref{3.8}.
Since the solution is axisymmetric and $r$ is uniformly bounded in
$\Omega$, the three-dimensional norms on $\Omega$ are equivalent to the
corresponding norms of the cylindrical components
$\mathbf u(r,z)=(u^r,u^\theta,u^z)(r,z)$ on $\Gamma$.
Thus, the logarithmic interpolation inequality \eqref{log}
on $\Gamma$ yields 
\begin{align}\label{3.10}
  \|\sqrt\rho\mathbf{u}\|_{L^4(\Omega)}^2\leq
  C\|(\sqrt{\rho}\mathbf u)(r,z)\|_{L^4(\Gamma)}^2&\leq
  C\big(1+\|(\sqrt\rho\mathbf{u})(r,z)\|_{L^2(\Gamma)}\big)\|\nabla_{r,z}\mathbf{u}\|_{L^2(\Gamma)}
\sqrt{\ln\big(2+\|\nabla_{r,z}\mathbf{u}\|_{L^2(\Gamma)}^2\big)}\notag\\
&\leq
  C\big(1+\|\sqrt\rho\mathbf{u}\|_{L^2(\Omega)}\big)\|\nabla\mathbf{u}\|_{L^2(\Omega)}
\sqrt{\ln\big(2+\|\nabla\mathbf{u}\|_{L^2(\Omega)}^2\big)}.
\end{align}
Note that $\mathcal{P}\mathbf u$ is axisymmetric. Hence $\nabla_{r,z}(\mathcal{P}\mathbf u)$ is also axisymmetric.
Applying the Ladyzhenskaya-type inequality to $\nabla_{r,z}(\mathcal{P}\mathbf u)$ on the meridional domain $\Gamma$ (in the same form as \eqref{LA2}), we infer
from \eqref{E2} that
\begin{align}\label{3.11}
  \|\nabla(\mathcal{P}\mathbf{u})\|_{L^{4}(\Omega)}
  \leq C \|\nabla_{r,z}(\mathcal{P}\mathbf{u})\|_{L^{4}(\Gamma)}
  &\leq C \|\nabla_{r,z}(\mathcal{P}\mathbf{u})\|_{L^{2}(\Gamma)}^\frac12
         \|\nabla_{r,z}^2(\mathcal{P}\mathbf{u})\|_{L^{2}(\Gamma)}^\frac12
         +C\|\nabla_{r,z}(\mathcal{P}\mathbf{u})\|_{L^{2}(\Gamma)}
         \notag\\
  &\leq C \|\nabla\mathbf{u}\|_{L^{2}(\Omega)}^\frac12
         \|\nabla^2(\mathcal{P}\mathbf{u})\|_{L^{2}(\Omega)}^\frac12
         +C \|\nabla\mathbf{u}\|_{L^{2}(\Omega)}
         \notag\\
  &\leq C\|\sqrt{\rho}\dot{\mathbf{u}}\|_{L^2(\Omega)}^{\frac{1}{2}}
          \|\nabla\mathbf{u}\|_{L^2(\Omega)}^\frac{1}{2}
         +C\|\nabla\mathbf{u}\|_{L^2(\Omega)}.
\end{align}
In view of $\divv \mathbf{u}=\frac{1}{r}\partial_r(ru^r)+\partial_zu^z$,
the effective viscous flux $F$ is axisymmetric as well, and therefore
\begin{align}\label{3.12}
 \|\nabla(\mathcal{Q}\mathbf{u})\|_{L^{4}(\Omega)}&\leq
 \frac{C}{2\mu+\lambda}\|F+P-\bar{P}\|_{L^4(\Omega)}
 \notag\\
 &\leq
 \frac{C}{2\mu+\lambda}\Big(\|F\|_{L^{2}(\Gamma)}^\frac12
 \|\nabla_{r,z} F\|_{L^{2}(\Gamma)}^\frac12
 +\|P-\bar{P}\|_{L^4(\Omega)}\Big)
 \notag\\
 &\leq
 \frac{C}{2\mu+\lambda}\|\sqrt{\rho}\dot{\mathbf{u}}\|_{L^2(\Omega)}^{\frac12}
 \big[(2\mu+\lambda)\|\divv\mathbf{u}\|_{L^2(\Omega)}+\|P-\bar{P}\|_{L^2(\Omega)}\big]^\frac12
+\frac{C}{2\mu+\lambda}\|P-\bar{P}\|_{L^4(\Omega)}
\notag\\
 &\leq C\|\sqrt{\rho}\dot{\mathbf{u}}\|_{L^2(\Omega)}^{\frac{1}{2}}
         \big(1+\|\nabla\mathbf{u}\|_{L^2(\Omega)}^\frac{1}{2}\big)
+\frac{C}{2\mu+\lambda}\|P-\bar{P}\|_{L^4(\Omega)}.
\end{align}
Combining \eqref{3.10}--\eqref{3.12}, we deduce form H\"older's inequality
and \eqref{3.3} that
\begin{align}\label{3.13}
&\int\rho \dot{\mathbf{u}}\cdot(\mathbf{u}\cdot\nabla)\mathbf{u}\mathrm{d}\mathbf{x}
\notag\\
&\leq
C\|\sqrt{\rho}\dot{\mathbf{u}}\|_{L^2}\|\sqrt{\rho}\mathbf{u}\|_{L^{4}}
      \big(\|\nabla(\mathcal{P}\mathbf{u})\|_{L^{4}}+\|\nabla(\mathcal{Q}\mathbf{u})\|_{L^{4}}\big)
\notag\\
&\leq
C\|\sqrt{\rho}\dot{\mathbf{u}}\|_{L^2}\Big(1+C_0^{\frac{1}{2}}\Big)^{\frac{1}{2}}
\|\nabla\mathbf{u}\|_{L^{2}}^{\frac{1}{2}}\ln^{\frac{1}{4}}
\big(2+\|\nabla\mathbf{u}\|_{L^2}^2\big)
\Big(\|\sqrt{\rho}\dot{\mathbf{u}}\|_{L^2}^{\frac{1}{2}}
\|\nabla\mathbf{u}\|_{L^2}^\frac{1}{2}+\|\sqrt{\rho}\dot{\mathbf{u}}\|_{L^2}^{\frac{1}{2}}
+\|\nabla\mathbf{u}\|_{L^2}
\notag\\
&\quad
+\frac{1}{2\mu+\lambda}\|P-\bar{P}\|_{L^4}\Big)\notag\\
&\leq \frac{1}{4}\|\sqrt{\rho}\dot{\mathbf{u}}\|_{L^2}^2+C(1+C_0)\|\nabla\mathbf{u}\|_{L^2}^2
\big(1+\|\nabla\mathbf{u}\|_{L^{2}}^2\big)
\ln\big(2+\|\nabla\mathbf{u}\|_{L^2}^2\big)
+\frac{C}{(2\mu+\lambda)^4}\|P-\bar{P}\|_{L^4}^4.
\end{align}

Set
\begin{align}\label{3.14}
\mathcal{E}_1(t)
\triangleq
\int\big[(2\mu+\lambda)(\divv \mathbf{u})^2
+\mu|\curl \mathbf{u}|^2
-2P\divv\mathbf{u}\big]\mathrm{d}\mathbf{x}.
\end{align}
According to \eqref{3.1}, there exists a constant
$K_1=K_1(a,\gamma,\hat{\rho},\Omega)>0$ such that
\begin{equation*}
\mathcal{E}_1(t)
\sim
(2\mu+\lambda)\|\divv \mathbf{u}\|_{L^2}^2
+\mu\|\curl \mathbf{u}\|_{L^2}^2
\end{equation*}
provided that $\lambda\ge K_1$.
Inserting \eqref{3.9} and \eqref{3.13} into \eqref{3.8}, we obtain
\begin{align}\label{3.15}
\frac{\mathrm d}{\mathrm dt}\mathcal{E}_1(t)
+\|\sqrt{\rho}\dot{\mathbf{u}}\|_{L^2}^2
\le
C(1+C_0)\|\nabla\mathbf{u}\|_{L^2}^2
\big(1+\|\nabla\mathbf{u}\|_{L^2}^2\big)
\ln\big(2+\|\nabla\mathbf{u}\|_{L^2}^2\big)
+\frac{C}{(2\mu+\lambda)^2}\|P-\bar{P}\|_{L^4}^4 .
\end{align}
Define
\begin{equation}\label{3.16}
Y(t)\triangleq 2+\mathcal E_1(t),\quad
H(t)\triangleq (1+C_0)\|\nabla\mathbf u\|_{L^2}^2
+\frac{1}{(2\mu+\lambda)^2}\|P-\bar P\|_{L^4}^4.
\end{equation}
Then it follows from \eqref{3.15} that
\begin{equation*}
Y'(t)\le CH(t)Y(t)\ln Y(t),
\end{equation*}
which implies
\begin{equation}\label{3.17}
\big(\ln Y(t)\big)'\le CH(t)\ln Y(t).
\end{equation}

Applying Gronwall's inequality, \eqref{HO3}, \eqref{3.1}, \eqref{3.3}, and \eqref{3.17}, we deduce that there exists a positive constant
$K_2=K_2(a,\gamma,\mu,\hat{\rho},\Omega)\ge K_1$ such that
\begin{equation}\label{3.18}
\sup_{0\leq t\leq T}
\big[(2\mu+\lambda)\|\divv \mathbf{u}\|_{L^2}^2+\mu\|\curl \mathbf{u}\|_{L^2}^2\big]\leq
M^{\exp\{K_2(1+C_0)^2\}}
\end{equation}
provided that $\lambda\ge K_2$.
Moreover, integrating \eqref{3.15} over $(0,T)$ and invoking
\eqref{3.1}, \eqref{3.3}, and \eqref{3.18}, we deduce
\begin{align*}
\int_0^T\|\sqrt{\rho}\dot{\mathbf{u}}\|_{L^2}^2\mathrm{d}t
&\leq CM+C(1+C_0)M^{\exp\{K_2(1+C_0)^2\}}
\ln\Big(2+M^{\exp\{K_2(1+C_0)^2\}}\Big)C_0\notag\\
&\leq
M^{\exp\big\{\frac32K_2(1+C_0)^2\big\}}
\end{align*}
provided that $\lambda$ satisfies \eqref{1.18} with $K\geq K_2$. This
along with \eqref{3.18} yields \eqref{3.7}.
\end{proof}

Next, we estimate
$\frac{1}{2\mu+\lambda}\int_{0}^{T}\|P-\bar{P}\|_{L^4}^4\mathrm{d}t$.

\begin{lemma}\label{l3.3}
Let \eqref{3.1} be satisfied. Then there exists a positive constant
$C=C(\hat{\rho}, a, \gamma, \Omega)$ such that
\begin{align}\label{3.19}
\frac{1}{(2\mu+\lambda)^2}\int_{0}^{T}\|P-\bar{P}\|_{L^4}^4\mathrm{d}t\le 1
\end{align}
provided that $\lambda$ satisfies \eqref{1.18} with $K\geq 2K_2$.
\end{lemma}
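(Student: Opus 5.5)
The plan is to derive a transport-type equation for $P-\bar P$ and test it against $(P-\bar P)^3$. The aim is a damped inequality whose damping rate is of size $(2\mu+\lambda)^{-1}$; multiplying through by $(2\mu+\lambda)^{-1}$ then converts it into the desired bound.

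\textbf{Step 1: the pressure equation.} From \eqref{a1}$_1$ we have
\begin{equation*}
P_t+\mathbf u\cdot\nabla P+\gamma P\divv\mathbf u=0 .
\end{equation*}
We substitute $\divv\mathbf u=\frac{F+P-\bar P}{2\mu+\lambda}$. Testing against $(P-\bar P)^3$ and integrating by parts in the convective term (using $\mathbf u\cdot\mathbf n=0$) gives
\begin{align*}
\frac14\frac{\mathrm d}{\mathrm dt}\|P-\bar P\|_{L^4}^4+\frac{\gamma}{2\mu+\lambda}\int P(P-\bar P)^4\mathrm d\mathbf x
=&-\frac{\gamma}{2\mu+\lambda}\int P F(P-\bar P)^3\mathrm d\mathbf x
+\frac14\int(P-\bar P)^4\divv\mathbf u\,\mathrm d\mathbf x\\
&-\bar P_t\int(P-\bar P)^3\mathrm d\mathbf x
-\frac{\gamma\bar P}{2\mu+\lambda}\int (P-\bar P)^4\mathrm d\mathbf x .
\end{align*}
Here $\bar P_t=-(\gamma-1)\overline{P\divv\mathbf u}$.

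\textbf{Step 2: coercivity.} The left-hand damping weight is $P$, which degenerates in vacuum. The natural fix is to use $\int P(P-\bar P)^4+\bar P\int (P-\bar P)^4\ge c(\bar P)\int(P-\bar P)^4$. We need $\bar P$ bounded below, and this follows from mass conservation and Jensen's inequality: $\bar P\ge a\bar\rho^\gamma>0$ as long as $\bar\rho>0$. Moving the $\bar P$ term to the left, we obtain
\begin{equation*}
\frac{\mathrm d}{\mathrm dt}\|P-\bar P\|_{L^4}^4+\frac{c}{2\mu+\lambda}\|P-\bar P\|_{L^4}^4\le\cdots .
\end{equation*}
We then bound the remaining terms using $\rho\le2\hat\rho$ from \eqref{3.1}. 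First, $\frac{C}{2\mu+\lambda}\|F\|_{L^4}\|P-\bar P\|_{L^4}^3$ is handled by Young's inequality, leaving $\frac{C}{2\mu+\lambda}\|F\|_{L^4}^4$. Next, $\|(P-\bar P)^4\divv\mathbf u\|_{L^1}\le C\|\divv\mathbf u\|_{L^2}$, and likewise $|\bar P_t|\le C\|\divv\mathbf u\|_{L^2}$.

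\textbf{Step 3: integrate in time and divide.} Multiplying by $(2\mu+\lambda)^{-1}$ and integrating over $(0,T)$ yields
\begin{equation*}
\frac{c}{(2\mu+\lambda)^2}\int_0^T\|P-\bar P\|_{L^4}^4\mathrm dt\le\frac{C}{2\mu+\lambda}+\frac{C}{(2\mu+\lambda)^2}\int_0^T\|F\|_{L^4}^4\mathrm dt+\frac{C}{2\mu+\lambda}\int_0^T\|\divv\mathbf u\|_{L^2}\mathrm dt .
\end{equation*}
The $F$-integral is controlled by the axisymmetric 2D Gagliardo--Nirenberg inequality, as in \eqref{3.12}: $\|F\|_{L^4}^4\le C\|F\|_{L^2}^2\|\nabla F\|_{L^2}^2$. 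By \eqref{E1} and \eqref{3.7} this gives
\begin{equation*}
\int_0^T\|F\|_{L^4}^4\mathrm dt\le C\big((2\mu+\lambda)M^{e^{2K_2(1+C_0)^2}}+C\big)\int_0^T\|\sqrt\rho\dot{\mathbf u}\|_{L^2}^2\mathrm dt\le C(2\mu+\lambda)M^{2e^{2K_2(1+C_0)^2}} .
\end{equation*}
Divided by $(2\mu+\lambda)^2$, this is at most $CM^{2\exp\{2K_2(1+C_0)^2\}}/\lambda$, which is $\le\frac13$ under \eqref{1.18} with $K\ge 2K_2$, since $\lambda\ge\exp\{M^{\exp\{K(1+C_0)^2\}}\}$ dominates any power of $M^{\exp\{2K_2(1+C_0)^2\}}$.

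\textbf{Main obstacle.} The hardest term is the last one, $\frac{1}{2\mu+\lambda}\int_0^T\|\divv\mathbf u\|_{L^2}\mathrm dt$. It is only $L^2$ in time by \eqref{3.3}, and $T$ is arbitrary, so it cannot simply be bounded by $\sqrt T$. I would avoid producing it at all. The term $\frac14\int(P-\bar P)^4\divv\mathbf u$ should again be rewritten via $\divv\mathbf u=\frac{F+P-\bar P}{2\mu+\lambda}$: its $(P-\bar P)^5$ part is bounded by $\frac{C\hat\rho^{\gamma}}{2\mu+\lambda}\|P-\bar P\|_{L^4}^4$. This must be absorbed by the damping, which requires checking the sign and size of the combined coefficient $\gamma P+\gamma\bar P-\frac14(P-\bar P)$. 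If that fails, the fallback is to test with a pressure-weighted quantity such as $\rho^{\gamma}(P-\bar P)^3$. Similarly, $\bar P_t=-\frac{\gamma-1}{2\mu+\lambda}\overline{P(F+P-\bar P)}$ is $O(\lambda^{-1})$ and of the same structure. After these rewritings every remaining term is $\lambda^{-1}$ times quantities controlled by \eqref{3.7} and \eqref{3.3}, together with the initial term $\frac{C\|P_0-\bar P\|_{L^4}^4}{2\mu+\lambda}$. Taking $\lambda$ large via \eqref{1.18} then closes \eqref{3.19}.
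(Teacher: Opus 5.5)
\textbf{Step 1 contains a term that is not there, and the corrected quartic test gives a damping rate proportional to $\bar P$.}
Testing $P_t+\mathbf u\cdot\nabla P+\gamma P\divv\mathbf u=0$ against $(P-\bar P)^3$ gives exactly
$\frac14\frac{\mathrm d}{\mathrm dt}\|P-\bar P\|_{L^4}^4+\frac{\gamma}{2\mu+\lambda}\int P(P-\bar P)^4\,\mathrm d\mathbf x=-\frac{\gamma}{2\mu+\lambda}\int PF(P-\bar P)^3\,\mathrm d\mathbf x+\frac14\int(P-\bar P)^4\divv\mathbf u\,\mathrm d\mathbf x-\bar P_t\int(P-\bar P)^3\,\mathrm d\mathbf x$.
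The extra term $-\frac{\gamma\bar P}{2\mu+\lambda}\int(P-\bar P)^4$ comes from counting $\gamma\bar P$ twice in $\gamma P=\gamma(P-\bar P)+\gamma\bar P$. So the coercivity claimed in Step 2 is unfounded.

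Your own repair splits $\frac14\int(P-\bar P)^4\divv\mathbf u$ using $\divv\mathbf u=\frac{F+P-\bar P}{2\mu+\lambda}$. After it, the true damping weight is $\gamma P-\frac14(P-\bar P)=(\gamma-\frac14)P+\frac14\bar P$. This is positive, but for small oscillations $P\approx\bar P$ it is only of size $\gamma\bar P$. So the quartic test intrinsically produces a damping rate of order $\bar P/(2\mu+\lambda)$, and every absorption step costs a factor $\bar P^{-1}\le(a\bar\rho^{\gamma})^{-1}$.

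This is a genuine gap. The lemma must hold under \eqref{1.18}, whose threshold depends only on $M$, $C_0$ and $K(\hat\rho,a,\gamma,\mu,\Omega)$. But $\bar\rho$ can be arbitrarily small with $M$ and $C_0$ fixed: take $\rho_0\equiv\bar\rho$ and $\divv\mathbf u_0=0$. The bound $w\ge\frac14|P-\bar P|$ does not rescue you either. It only controls $\int_0^T\|P-\bar P\|_{L^5}^5\,\mathrm dt$, which does not bound $\int_0^T\|P-\bar P\|_{L^4}^4\,\mathrm dt$ uniformly in $T$.

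\textbf{The missing idea is the multiplier $3(P-\bar P)^2$ used in the paper.}
Apply it to $(P-\bar P)_t+\mathbf u\cdot\nabla(P-\bar P)+\gamma P\divv\mathbf u-(\gamma-1)\overline{P\divv\mathbf u}=0$.
The convective term contributes $-\int(P-\bar P)^3\divv\mathbf u$. The term $3\gamma\int P(P-\bar P)^2\divv\mathbf u$ splits into $3\gamma\int(P-\bar P)^3\divv\mathbf u+3\gamma\bar P\int(P-\bar P)^2\divv\mathbf u$. The leading part is therefore $(3\gamma-1)\int(P-\bar P)^3\divv\mathbf u=\frac{3\gamma-1}{2\mu+\lambda}\big(\|P-\bar P\|_{L^4}^4+\int(P-\bar P)^3F\big)$. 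This is an even power with the universal coefficient $3\gamma-1$: no vacuum degeneracy and no lower bound on $\bar P$.

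What remains is $3\gamma\bar P\int(P-\bar P)^2\divv\mathbf u$ and $3(\gamma-1)\overline{P\divv\mathbf u}\int(P-\bar P)^2$. Both are only quadratic in $P-\bar P$, so Young's inequality absorbs them as $\frac{\varepsilon}{2\mu+\lambda}\|P-\bar P\|_{L^4}^4+C(2\mu+\lambda)\|\divv\mathbf u\|_{L^2}^2$. The time integral of the last term is $\le CC_0$ by \eqref{3.3}.

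This also resolves your ``main obstacle'': you never need $\int_0^T\|\divv\mathbf u\|_{L^2}\,\mathrm dt$. Rewriting $\bar P_t$ through $F$ is unnecessary, and it would even create the unsigned quintic term $\frac{\gamma-1}{(2\mu+\lambda)|\Omega|}\|P-\bar P\|_{L^2}^2\int(P-\bar P)^3$. The time-boundary term is $\sup_t\big|\int(P-\bar P)^3\big|\le C(\hat\rho,a,\gamma,\Omega)$, which needs no sign.

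Your treatment of $\int_0^T\|F\|_{L^4}^4\,\mathrm dt$, via the 2D Gagliardo--Nirenberg bound, \eqref{E1} and \eqref{3.7}, is fine and matches the paper's \eqref{3.23}--\eqref{3.24}.
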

\begin{proof}
Using \eqref{a1}$_1$ and $P(\rho)=a\rho^\gamma$, the pressure satisfies
\begin{equation}\label{3.20}
P_t+\divv(P\mathbf{u})+(\gamma-1)P\divv\mathbf{u}=0,
\end{equation}
which yields
\begin{equation}\label{3.21}
(P-\bar{P})_t+\mathbf{u}\cdot\nabla(P-\bar{P})
+\gamma P\divv\mathbf{u}
-(\gamma-1)\overline{P\divv\mathbf{u}}=0.
\end{equation}

Multiplying \eqref{3.21} by $3(P-\bar{P})^2$ and integrating the resultant over $\Omega$, we obtain that
\begin{align}\label{3.22}
&\frac{3\gamma-1}{2\mu+\lambda}\|P-\bar{P}\|_{L^4}^4
\notag\\
&=
-\frac{\mathrm{d}}{\mathrm{d}t}\int(P-\bar{P})^3\mathrm{d}\mathbf{x}
-\frac{3\gamma-1}{2\mu+\lambda}\int(P-\bar{P})^3F\mathrm{d}\mathbf{x}
-3\gamma \bar{P}\int(P-\bar{P})^2\divv\mathbf{u}\mathrm{d}\mathbf{x}
\notag\\
&\quad
+3(\gamma-1)\overline{P\divv\mathbf{u}}\int(P-\bar{P})^2\mathrm{d}\mathbf{x}
\notag\\
&\leq
-\frac{\mathrm{d}}{\mathrm{d}t}\int(P-\bar{P})^3\mathrm{d}\mathbf{x}+
\frac{3\gamma-1}{2(2\mu+\lambda)}\|P-\bar{P}\|_{L^4}^4+\frac{C}{2\mu+\lambda}\|F\|_{L^4}^4
+C(2\mu+\lambda)\|\divv\mathbf{u}\|_{L^2}^2,
\end{align}
where we have used
\begin{equation*}
  \overline{P\divv\mathbf{u}}=\frac{1}{|\Omega|}\int a\rho^\gamma\divv\mathbf{u}\mathrm{d}\mathbf{x}\leq C(\hat{\rho},a,\gamma,\Omega)\|\divv\mathbf{u}\|_{L^2}.
\end{equation*}

Recalling \eqref{3.12}, we have
\begin{equation}\label{3.23}
  \|F\|_{L^4}^4\leq C\|\sqrt{\rho}\dot{\mathbf{u}}\|_{L^2}^2
 \big[1+(2\mu+\lambda)^2\|\divv\mathbf{u}\|_{L^2}^2\big].
\end{equation}
Integrating \eqref{3.22} over $(0,T)$ and invoking \eqref{3.1}, \eqref{3.3}, \eqref{3.7}, and \eqref{3.23}, we deduce
\begin{align}\label{3.24}
&\frac{1}{2\mu+\lambda}\int_{0}^{T}\|P-\bar{P}\|_{L^{4}}^{4}\mathrm{d}t
\notag\\
&\leq
C\sup_{0\leq t\leq T}\|P-\bar{P}\|_{L^3}^3
+\frac{C}{2\mu+\lambda}\int_0^T\|\sqrt{\rho}\dot{\mathbf{u}}\|_{L^2}^2
 \big[1+(2\mu+\lambda)^2\|\divv\mathbf{u}\|_{L^2}^2\big]\mathrm{d}t+CC_0
 \notag\\
 &\leq
M^{\exp\big\{3K_2(1+C_0)^2\big\}}
\end{align}
provided that $\lambda$ satisfies \eqref{1.18} with $K\geq 2K_2$, as desired.
\end{proof}

Motivated by \cite{Hoff95,Hoff95*,HWZ}, we prove the following
time-weighted estimate.
Here and throughout this paper, we adopt the notation
\begin{equation*}
\divf\mathbf{u}\triangleq\divv\mathbf{u}_t+\mathbf{u}\cdot\nabla\divv\mathbf{u}.
\end{equation*}

\begin{lemma}\label{l3.4}
Let \eqref{3.1} be satisfied. Then
\begin{align}\label{3.25}
\sup_{0\leq t\leq T}\big(\sigma\|\sqrt{\rho}{\dot{\mathbf{u}}}\|_{L^2}^2\big)
+\int_0^T\big[(2\mu+\lambda)\sigma\|\divf \mathbf{u}\|_{L^2}^2
+\mu\sigma\|\curl \dot{\mathbf{u}}\|_{L^2}^2\big]\mathrm{d}t
\leq
\exp\left\{M^{\exp\{4K_2(1+C_0)^2\}} \right\}
\end{align}
provided that $\lambda$ satisfies \eqref{1.18} with $K\geq 3K_2$.
\end{lemma}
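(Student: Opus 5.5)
We follow Hoff's time-weighted scheme, but keep the compressible part in the form $\divf\mathbf{u}$ rather than $\divv\dot{\mathbf{u}}$, so that the final estimate is uniform in $\lambda$.

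\textbf{Step 1: the identity.} Write the momentum equation as
\begin{equation*}
\rho\dot{\mathbf u}-(2\mu+\lambda)\nabla\divv\mathbf u+\mu\nabla\times\curl\mathbf u+\nabla P=0.
\end{equation*}
Apply $\sigma\dot u^j[\partial_t+\divv(\mathbf u\,\cdot)]$ to the $j$-th component, sum over $j$, and integrate over $\Omega$. Using \eqref{a1}$_1$ we obtain
\begin{equation*}
\tfrac12\tfrac{\mathrm d}{\mathrm dt}\big(\sigma\|\sqrt\rho\dot{\mathbf u}\|_{L^2}^2\big)-\tfrac12\sigma'\|\sqrt\rho\dot{\mathbf u}\|_{L^2}^2=I_1+I_2+I_3,
\end{equation*}
where $I_1,I_2,I_3$ come from the $\divv$, $\curl$ and pressure terms respectively.

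\textbf{Step 2: the divergence term $I_1$.} The term $I_1=(2\mu+\lambda)\sigma\int\dot u^j[\partial_j\divv\mathbf u_t+\divv(\mathbf u\partial_j\divv\mathbf u)]\,\mathrm{d}\mathbf{x}$ is integrated by parts. The interior part gives $-(2\mu+\lambda)\sigma\int\divv\dot{\mathbf u}\,\divf\mathbf u\,\mathrm{d}\mathbf{x}$ plus commutators of the form $(2\mu+\lambda)\sigma\int\divv\mathbf u\,\nabla\mathbf u:\nabla\dot{\mathbf u}\,\mathrm{d}\mathbf{x}$. Since $\divv\dot{\mathbf u}=\divf\mathbf u+\partial_iu^j\partial_ju^i$, this becomes
\begin{equation*}
-(2\mu+\lambda)\sigma\|\divf\mathbf u\|_{L^2}^2-(2\mu+\lambda)\sigma\int\divf\mathbf u\,\partial_iu^j\partial_ju^i\,\mathrm{d}\mathbf{x}.
\end{equation*}
The remainder is absorbed by Young's inequality, giving terms like $(2\mu+\lambda)\sigma\|\nabla\mathbf u\|_{L^4}^4$.

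The boundary contributions contain $\dot{\mathbf u}\cdot\mathbf n$, which by \eqref{2.23} equals $-(\mathbf u\cdot\nabla\mathbf n)\cdot\mathbf u$. We do not leave $\divv\mathbf u$ alone there. Combined with the pressure boundary terms from $I_3$, they reorganize into $\int_{\partial\Omega}F\,(\mathbf u\cdot\nabla\mathbf n\cdot\mathbf u)_t\,\mathrm dS$-type expressions. We move the time derivative onto $\mathbf u$ and convert back to the interior via the divergence theorem. The resulting terms are bounded by $\|\nabla F\|_{L^2}$, $\|F\|_{L^4}$, $\|\nabla\dot{\mathbf u}\|_{L^2}$ and lower-order quantities, using \eqref{E1}, \eqref{E3} and \eqref{2.21}.

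\textbf{Step 3: the curl term $I_2$.} It yields $-\mu\sigma\|\curl\dot{\mathbf u}\|_{L^2}^2$ plus cubic terms $\mu\sigma\int|\nabla\mathbf u||\nabla\dot{\mathbf u}||\curl\mathbf u|\,\mathrm{d}\mathbf{x}$. The boundary term vanishes by $\curl\mathbf u\times\mathbf n=\mathbf 0$, up to curvature terms that are handled as in Step 2.

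\textbf{Step 4: the pressure term $I_3$.} It is treated with \eqref{3.20}. The coefficient $\gamma P\divv\mathbf u$ produces $\sigma\|P\|_{L^\infty}\|\nabla\mathbf u\|_{L^2}\|\nabla\dot{\mathbf u}\|_{L^2}$. Note that $\nabla\dot{\mathbf u}$ is controlled by $\divf\mathbf u$, $\curl\dot{\mathbf u}$ and $\|\nabla\mathbf u\|_{L^4}^2$ through \eqref{2.21}. No factor of $\lambda$ appears here because $P$ is bounded by \eqref{3.1}.

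\textbf{Step 5: the main obstacle.} The hardest term is the quartic one, $(2\mu+\lambda)\sigma\|\nabla\mathbf u\|_{L^4}^4$ together with $\int\sigma\rho\dot{\mathbf u}\cdot\nabla\mathbf u\cdot\mathbf u\,\mathrm{d}\mathbf{x}$-type terms, and in particular the self-interaction of $\mathcal P\mathbf u$. We split $\nabla\mathbf u=\nabla\mathcal P\mathbf u+\nabla\mathcal Q\mathbf u$.

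For the $\mathcal Q$-part, \eqref{3.12} gives $\|\nabla\mathcal Q\mathbf u\|_{L^4}^4\lesssim\|\sqrt\rho\dot{\mathbf u}\|_{L^2}^2(1+\|\nabla\mathbf u\|_{L^2}^2)+(2\mu+\lambda)^{-4}\|P-\bar P\|_{L^4}^4$. The $(2\mu+\lambda)$ prefactor is compensated because products with $\divv\mathbf u$ carry $(2\mu+\lambda)^{-1}$ through $F$.

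For the $\mathcal P$-part, direct integration by parts creates the uncontrollable boundary terms $\int_{\partial\Omega}F(\mathcal P\mathbf u)_t\cdot\nabla(\mathcal P\mathbf u)\cdot\mathbf n\,\mathrm{d}S$. Instead we write the convection as $\mathcal P\mathbf u\cdot\nabla\mathcal P\mathbf u$ and use the axisymmetric Ladyzhenskaya bound \eqref{3.11} with $\|\nabla^2\mathcal P\mathbf u\|_{L^2}\lesssim\|\sqrt\rho\dot{\mathbf u}\|_{L^2}+\|\nabla\mathbf u\|_{L^2}$. This gives $\|\nabla\mathcal P\mathbf u\|_{L^4}^4\lesssim\|\sqrt\rho\dot{\mathbf u}\|_{L^2}^2\|\nabla\mathbf u\|_{L^2}^2+\|\nabla\mathbf u\|_{L^2}^4$, and the time derivative is shifted onto $\mathcal P\mathbf u$ in the interior rather than on the boundary. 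This is the step I expect to need the most care.

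\textbf{Step 6: Gronwall.} Collecting everything, we reach
\begin{equation*}
\tfrac{\mathrm d}{\mathrm dt}\big(\sigma\|\sqrt\rho\dot{\mathbf u}\|_{L^2}^2\big)+(2\mu+\lambda)\sigma\|\divf\mathbf u\|_{L^2}^2+\mu\sigma\|\curl\dot{\mathbf u}\|_{L^2}^2\le C\,\Phi(t)\,\sigma\|\sqrt\rho\dot{\mathbf u}\|_{L^2}^2+C\Psi(t),
\end{equation*}
with $\Phi=(1+\|\nabla\mathbf u\|_{L^2}^2)\|\nabla\mathbf u\|_{L^2}^2+\|\sqrt\rho\dot{\mathbf u}\|_{L^2}^2$. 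The functions $\int_0^T\Phi\,\mathrm dt$ and $\int_0^T\Psi\,\mathrm dt$ are bounded by powers of $M^{\exp\{2K_2(1+C_0)^2\}}$, by \eqref{3.3}, \eqref{3.7} and \eqref{3.19}. The term with $\sigma'$ is supported in $[0,1]$ and is bounded by \eqref{3.7}. Gronwall's inequality then gives the doubly exponential bound in \eqref{3.25}, provided $\lambda$ satisfies \eqref{1.18} with $K\ge3K_2$; this condition is used to absorb the $\lambda$-dependent remainders.
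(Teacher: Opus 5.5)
There is a genuine gap in Steps 2 and 5, at the point where $\lambda$-uniformity is decided. After extracting $-(2\mu+\lambda)\sigma\|\divf\mathbf u\|_{L^2}^2$, you are left with $-(2\mu+\lambda)\sigma\int\divf\mathbf u\,\partial_iu^j\partial_ju^i\,\mathrm d\mathbf x$, and you treat it with Young's inequality. Split $\nabla\mathbf u=\nabla(\mathcal P\mathbf u)+\nabla(\mathcal Q\mathbf u)$. For every product containing at least one $\nabla(\mathcal Q\mathbf u)$ this works, because $\|\nabla(\mathcal Q\mathbf u)\|_{L^4}\le C(2\mu+\lambda)^{-1}\|F+P-\bar P\|_{L^4}$ supplies the needed negative power. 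For the $\mathcal P\mathcal P$ part it does not. The only dissipation that can absorb $\divf\mathbf u$ is $(2\mu+\lambda)\sigma\|\divf\mathbf u\|_{L^2}^2$, so Young necessarily leaves $(2\mu+\lambda)\sigma\|\nabla(\mathcal P\mathbf u)\|_{L^4}^4$, and the divergence-free part carries no smallness in $\lambda$. The Ladyzhenskaya bound \eqref{3.11} controls $\|\nabla(\mathcal P\mathbf u)\|_{L^4}^4$ only by $\lambda$-independent quantities. Your $\Psi$ therefore contains $(2\mu+\lambda)\big(\|\sqrt\rho\dot{\mathbf u}\|_{L^2}^2\|\nabla\mathbf u\|_{L^2}^2+\|\nabla\mathbf u\|_{L^2}^4\big)$, whose time integral is of order $\lambda$. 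So the claim in Step 6 that $\int_0^T\Psi\,\mathrm dt$ is bounded by powers of $M^{\exp\{2K_2(1+C_0)^2\}}$ is false, and Gronwall gives a bound growing with $\lambda$. That is not \eqref{3.25}. It would also break Lemma \ref{l3.5}, where this bound is divided by $2\mu+\lambda$ and must become small. Enlarging $\lambda$ only worsens this term, so the condition $K\ge3K_2$ cannot absorb it.

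The missing idea is to avoid Young on the $\mathcal P\mathcal P$ term and use the structure of $(2\mu+\lambda)\divf\mathbf u$. From $(2\mu+\lambda)\divv\mathbf u=F+P-\bar P$ and the pressure equation \eqref{3.21},
\[
(2\mu+\lambda)\divf\mathbf u=F_t+\mathbf u\cdot\nabla F-\gamma P\divv\mathbf u+(\gamma-1)\overline{P\divv\mathbf u}.
\]
Hence the $\mathcal P\mathcal P$ term equals $-\sigma\int F_t(\mathcal P\mathbf u)^i_j(\mathcal P\mathbf u)^j_i\,\mathrm d\mathbf x$ plus $\lambda$-free terms, which are controlled by \eqref{E1}, \eqref{3.11} and \eqref{GN-3D-2}. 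One then integrates by parts in time and writes $(\mathcal P\mathbf u)_t=\mathcal P\dot{\mathbf u}-\mathcal P(\mathbf u\cdot\nabla\mathbf u)$. The Lamb form $\mathcal P(\mathbf u\cdot\nabla\mathbf u)=-\mathcal P(\mathbf u\times\boldsymbol\omega)$ gives
\[
\|\nabla\mathcal P(\mathbf u\cdot\nabla\mathbf u)\|_{L^2}\le C\|\nabla\mathbf u\|_{L^4}^2+C\|\mathbf u\|_{L^\infty}\|\nabla^2(\mathcal P\mathbf u)\|_{L^2}.
\]
Now $F$ appears with no $\lambda$ weight, and $\|\nabla F\|_{L^2}\le C\|\rho\dot{\mathbf u}\|_{L^2}$. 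This is what ``shifting the time derivative onto $\mathcal P\mathbf u$ in the interior'' has to mean. In your write-up there is no time derivative to shift, because Young has already been applied.

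There is a related consequence your Step 6 omits. This time integration by parts, and the analogous one for the boundary term $-\int_{\partial\Omega}(F-\bar P)_t(\mathbf u\cdot\nabla\mathbf n\cdot\mathbf u)\,\mathrm dS$, put two non-sign-definite quantities under $\frac{\mathrm d}{\mathrm dt}$: $\int\sigma F(\mathcal P\mathbf u)^i_j(\mathcal P\mathbf u)^j_i\,\mathrm d\mathbf x$ and $\int_{\partial\Omega}\sigma(F-\bar P)(\mathbf u\cdot\nabla\mathbf n\cdot\mathbf u)\,\mathrm dS$. To keep the Gronwall functional coercive, one must add a large multiple of $\mathcal E_1(t)$ from \eqref{3.14}. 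A differential inequality with only $\sigma\|\sqrt\rho\dot{\mathbf u}\|_{L^2}^2$ under the time derivative is not what the computation produces.
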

\begin{proof}
Applying $\sigma\dot{u}^j\big(\partial_t+\divv(\mathbf{u}\,\cdot)\big)$ to \eqref{3.5}$^j$,
summing the resulting identities over $j$, and integrating over $\Omega$,
we obtain from \eqref{a1}$_1$ and \eqref{a3} that
\begin{align}\label{3.26}
&\frac{1}{2}\frac{\mathrm{d}}{\mathrm{d}t}
\int\sigma\rho|\dot{\mathbf{u}}|^2\mathrm{d}\mathbf{x}
-\frac{\sigma'}{2}\int\rho|\dot{\mathbf{u}}|^2\mathrm{d}\mathbf{x}
\notag\\
&=-\sigma\int\dot{u}^j\big[\partial_j P_{t}+\divv(\mathbf{u}\partial_{j}P)\big]\mathrm{d}\mathbf{x}
-\mu\sigma\int\dot{u}^j\big[(\nabla\times\curl \mathbf{u}_{t})^j+\divv\big(\mathbf{u}(\nabla\times\curl \mathbf{u})^{j}\big)\big]\mathrm{d}\mathbf{x}
\notag\\
&\quad
+(2\mu+\lambda)\sigma\int\dot{u}^j\big[\partial_j\divv\mathbf{u}_t +\divv(\mathbf{u}\partial_j\divv\mathbf{u})\big]\mathrm{d}\mathbf{x}\triangleq \sum_{i=1}^{3}\mathcal{I}_i.
\end{align}
We next estimate the terms $\mathcal{I}_i \ (i=1,2,3)$ separately.

Using integration by parts, \eqref{a3}, \eqref{3.20}, and Cauchy--Schwarz inequality, we obtain
\begin{align}\label{3.27}
\mathcal{I}_{1}
&=-\sigma\int_{\partial\Omega}P_{t}\dot{\mathbf{u}}\cdot \mathbf{n}\mathrm{d}S+\sigma\int P_{t}\divv\dot{\mathbf{u}}\mathrm{d}\mathbf{x}-\sigma\int\dot{\mathbf{u}}\cdot\nabla\divv(P\mathbf{u})\mathrm{d}\mathbf{x}
+\sigma\int\dot{u}^j\divv(P\partial_{j}\mathbf{u})\mathrm{d}\mathbf{x}\notag\\
&=-\sigma\int_{\partial\Omega}\big(P_{t}+\divv(P\mathbf{u})\big)\dot{\mathbf{u}}\cdot \mathbf{n}\mathrm{d}S
+\sigma\int \big(P_{t}+\divv(P\mathbf{u})\big)\divv\dot{\mathbf{u}}\mathrm{d}\mathbf{x}
+\sigma\int\dot{\mathbf{u}}\cdot\nabla\mathbf{u}\cdot\nabla P\mathrm{d}\mathbf{x}
\notag\\&\quad+\sigma\int P\dot{\mathbf{u}}\cdot\nabla\divv\mathbf{u}\mathrm{d}\mathbf{x}
\notag\\
&=
-\sigma\int_{\partial\Omega}\big(P_{t}+\divv(P\mathbf{u})\big)\dot{\mathbf{u}}\cdot \mathbf{n}\mathrm{d}S
-(\gamma-1)\sigma\int P\divv\mathbf{u}\divv\dot{\mathbf{u}}\mathrm{d}\mathbf{x}
+\sigma\int_{\partial\Omega} P\dot{\mathbf{u}}\cdot\nabla\mathbf{u}\cdot\mathbf{n}\mathrm{d}S
\notag\\ & \quad
-\sigma\int P\partial_j\dot{\mathbf{u}}\cdot\nabla u^j\mathrm{d}\mathbf{x}
\notag\\
&\leq
\delta\sigma\|\nabla\dot{\mathbf{u}}\|_{L^2}^2
+C\sigma\|\nabla\mathbf{u}\|_{L^2}^2
-\sigma\int_{\partial\Omega}\big(P_{t}+\divv(P\mathbf{u})\big)\dot{\mathbf{u}}\cdot \mathbf{n}\mathrm{d}S
+\sigma\int_{\partial\Omega} P\dot{\mathbf{u}}\cdot\nabla\mathbf{u}\cdot\mathbf{n}\mathrm{d}S,
\end{align}
where the positive constant $\delta=\delta(\mu,\Omega)$ will be determined later.
We denote the boundary terms by
\begin{equation*}
  \mathcal{B}_1+\mathcal{B}_2=-\sigma\int_{\partial\Omega}\big(P_{t}+\divv(P\mathbf{u})\big)\dot{\mathbf{u}}\cdot \mathbf{n}\mathrm{d}S
+\sigma\int_{\partial\Omega} P\dot{\mathbf{u}}\cdot\nabla\mathbf{u}\cdot\mathbf{n}\mathrm{d}S,
\end{equation*}
which will be analysed in the sequel.

For the term $\mathcal{I}_2$, owing to the identity
\begin{equation*}
\curl\dot{\mathbf{u}}
=\curl \mathbf{u}_t+\curl(\mathbf{u}\cdot\nabla\mathbf{u})
=\curl \mathbf{u}_t+\mathbf{u}\cdot\nabla\curl\mathbf{u}
+\nabla u^i\times\partial_i\mathbf{u}
\end{equation*}
and the Green formula
\begin{equation*}
  \int_{\Omega}(\nabla\times \mathbf{u})\cdot\mathbf{v}\mathrm{d}\mathbf{x}
  =\int_{\partial\Omega}(\mathbf{n}\times \mathbf{u})\cdot\mathbf{v}\mathrm{d}S
  +\int_{\Omega}(\nabla\times \mathbf{v})\cdot\mathbf{u}\mathrm{d}\mathbf{x},
\end{equation*}
one deduces from \eqref{a3} and \eqref{E5} that
\begin{align}\label{3.28}
\mathcal{I}_2
&=
-\mu\sigma\int\curl\dot{\mathbf{u}}\cdot\curl\mathbf{u}_t\mathrm{d}\mathbf{x}
-\mu\sigma\int\big[\dot{\mathbf{u}}\cdot(\nabla\times\curl\mathbf{u})(\divv \mathbf{u})
+\mathbf{u}\cdot\nabla(\nabla\times\curl\mathbf{u})\cdot\dot{\mathbf{u}}\big]
\mathrm{d}\mathbf{x}
\notag\\
&=
-\mu\sigma\int\curl\dot{\mathbf{u}}\cdot
\big(\curl\dot{\mathbf{u}}-\mathbf{u}\cdot\nabla\curl\mathbf{u}
-\nabla u^i\times\partial_i\mathbf{u}\big)\mathrm{d}\mathbf{x}
+\mu\sigma\int\mathbf{u}\cdot\nabla\dot{\mathbf{u}}\cdot
(\nabla\times\curl\mathbf{u})\mathrm{d}\mathbf{x}
\notag\\
&=
-\mu\sigma\int\curl\dot{\mathbf{u}}\cdot
\big(\curl\dot{\mathbf{u}}-\nabla u^i\times\partial_i\mathbf{u}\big)\mathrm{d}\mathbf{x}
+\mu\sigma\int\mathbf{u}\cdot\nabla\curl\mathbf{u}\cdot\curl\dot{\mathbf{u}}\mathrm{d}\mathbf{x}
\notag\\
&\quad
+\mu\sigma\int\big(\nabla\times(\mathbf{u}\cdot\nabla\dot{\mathbf{u}})\big)
\cdot\curl\mathbf{u}\mathrm{d}\mathbf{x}
\notag\\
&=
-\mu\sigma\int\curl\dot{\mathbf{u}}\cdot
\big(\curl\dot{\mathbf{u}}-\nabla u^i\times\partial_i\mathbf{u}\big)\mathrm{d}\mathbf{x}
+\mu\sigma\int\mathbf{u}\cdot\nabla\curl\mathbf{u}
\cdot\curl\dot{\mathbf{u}}\mathrm{d}\mathbf{x}
\notag\\
&\quad
+\mu\sigma\int\mathbf{u}\cdot\nabla\curl\dot{\mathbf{u}}
\cdot\curl\mathbf{u}\mathrm{d}\mathbf{x}
+\mu\sigma\int\curl\mathbf{u}\cdot(\nabla u^i\times\partial_i\dot{\mathbf{u}})\mathrm{d}\mathbf{x}
\notag\\
&=
-\mu\sigma\int\curl\dot{\mathbf{u}}\cdot
\big(\curl\dot{\mathbf{u}}-\nabla u^i\times\partial_i\mathbf{u}\big)\mathrm{d}\mathbf{x}
-\mu\sigma\int\divv\mathbf{u}\curl\mathbf{u}\cdot\curl\dot{\mathbf{u}}\mathrm{d}\mathbf{x}
\notag\\
&\quad
+\mu\sigma\int\curl\mathbf{u}\cdot(\nabla u^i\times\partial_i\dot{\mathbf{u}})\mathrm{d}\mathbf{x}
\notag\\
&\leq
-\mu\sigma\|\curl\dot{\mathbf{u}}\|_{L^2}^2
+\delta\sigma\|\nabla\dot{\mathbf{u}}\|_{L^2}^2
+C\sigma\|\nabla\mathbf{u}\|_{L^4}^4
\notag\\
&\leq
-\mu\sigma\|\curl\dot{\mathbf{u}}\|_{L^2}^2
+\delta\sigma\|\nabla\dot{\mathbf{u}}\|_{L^2}^2
+C\sigma\big(1+\|\sqrt{\rho}\dot{\mathbf{u}}\|_{L^2}^2+\|\nabla\mathbf{u}\|_{L^2}^2\big)
\big(\|\sqrt{\rho}\dot{\mathbf{u}}\|_{L^2}^2+\|\nabla\mathbf{u}\|_{L^2}^2\big)
\notag\\
&\quad
+\frac{C}{(2\mu+\lambda)^4}\|P-\bar{P}\|_{L^4}^4,
\end{align}
where we have used $0\leq\sigma,\, \sigma'\leq1$ for $t>0$.

We now turn to the estimate of $\mathcal{I}_3$. To this end, we decompose it as follows:
\begin{align}\label{3.29}
\mathcal{I}_3
&=
(2\mu+\lambda)\sigma\int\dot{u}^j\big[\partial_j\divv\mathbf{u}_t+ \partial_j\divv(\mathbf{u}\divv\mathbf{u})-\divv(\partial_j\mathbf{u}\divv\mathbf{u})\big]\mathrm{d}\mathbf{x}
\notag\\
&=(2\mu+\lambda)\sigma\int_{\partial\Omega}
\big[\divv\mathbf{u}_t+\divv(\mathbf{u}\divv\mathbf{u})\big]\dot{\mathbf{u}}\cdot\mathbf{n}\mathrm{d}S
-(2\mu+\lambda)\sigma\int\divv\dot{\mathbf{u}}\big[\divv\mathbf{u}_t+\divv(\mathbf{u}\divv\mathbf{u})\big]\mathrm{d}\mathbf{x}
\notag\\
&\quad
-(2\mu+\lambda)\sigma\int_{\partial\Omega}(\divv\mathbf{u})
\dot{\mathbf{u}}\cdot\nabla\mathbf{u}\cdot\mathbf{n}\mathrm{d}S
+(2\mu+\lambda)\sigma\int\partial_j\dot{\mathbf{u}}\cdot\nabla u^j(\divv\mathbf{u})\mathrm{d}\mathbf{x}
\notag\\
&=
(2\mu+\lambda)\sigma\int_{\partial\Omega}
\big[\divv\mathbf{u}_t+\divv(\mathbf{u}\divv\mathbf{u})\big]\dot{\mathbf{u}}\cdot\mathbf{n}\mathrm{d}S
-(2\mu+\lambda)\sigma\int_{\partial\Omega}(\divv\mathbf{u})
\dot{\mathbf{u}}\cdot\nabla\mathbf{u}\cdot\mathbf{n}\mathrm{d}S
\notag\\
&\quad
-(2\mu+\lambda)\sigma\int\big(\divv\mathbf{u}_t+\mathbf{u}\cdot\nabla\divv\mathbf{u}
+\partial_j\mathbf{u}\cdot\nabla u^j\big)
\big[\divv\mathbf{u}_t+\mathbf{u}\cdot\nabla\divv\mathbf{u}+(\divv\mathbf{u})^2\big]\mathrm{d}\mathbf{x}
\notag\\
&\quad
+(2\mu+\lambda)\sigma\int\partial_j\dot{\mathbf{u}}\cdot\nabla u^j(\divv\mathbf{u})\mathrm{d}\mathbf{x}
\notag\\
&=(2\mu+\lambda)\sigma\int_{\partial\Omega}
\big[(\divv\mathbf{u}_t)\dot{\mathbf{u}}\cdot\mathbf{n}
+\divv(\mathbf{u}\divv\mathbf{u})\dot{\mathbf{u}}\cdot\mathbf{n}
-(\divv\mathbf{u})
\dot{\mathbf{u}}\cdot\nabla\mathbf{u}\cdot\mathbf{n}\big]\mathrm{d}S
-(2\mu+\lambda)\sigma\int(\divf\mathbf{u})^2\mathrm{d}\mathbf{x}
\notag\\
&\quad
-(2\mu+\lambda)\sigma\int\big[(\divf\mathbf{u})\partial_j\mathbf{u}\cdot\nabla u^j+\divf\mathbf{u}(\divv\mathbf{u})^2
+\partial_j\mathbf{u}\cdot\nabla u^j(\divv\mathbf{u})^2
-\partial_j\dot{\mathbf{u}}\cdot\nabla u^j(\divv\mathbf{u})\big]\mathrm{d}\mathbf{x}
\notag\\
&\triangleq
\sum_{i=3}^5\mathcal{B}_{i}
-(2\mu+\lambda)\sigma\|\divf\mathbf{u}\|_{L^2}^2+\sum_{i=1}^4\mathcal{I}_{3i},
\end{align}
where the boundary terms $\mathcal{B}_{i} \ (i=3,4,5)$ are postponed to a later stage.

We next estimate the terms $\mathcal{I}_{3i} \ (i=1,2,3,4)$ separately.

Applying H\"{o}lder's inequality, \eqref{E4}, and \eqref{E5}, we derive that
\begin{align}\label{3.30}
\mathcal{I}_{31}
&=
-(2\mu+\lambda)\sigma\int\divf\mathbf{u}(\mathcal{P}\mathbf{u}+\mathcal{Q}\mathbf{u})_j^i
(\mathcal{P}\mathbf{u}
+\mathcal{Q}\mathbf{u})_i^j\mathrm{d}\mathbf{x}
\notag\\
&=
-(2\mu+\lambda)\sigma\int\divf\mathbf{u}
\big[(\mathcal{P}\mathbf{u})_j^i(\mathcal{P}\mathbf{u})_i^j
+(\mathcal{P}\mathbf{u})_j^i(\mathcal{Q}\mathbf{u})_i^j
+(\mathcal{Q}\mathbf{u})_j^i(\mathcal{P}\mathbf{u})_i^j
+(\mathcal{Q}\mathbf{u})_j^i(\mathcal{Q}\mathbf{u})_i^j\big]\mathrm{d}\mathbf{x}
\notag\\
&\leq
-(2\mu+\lambda)\sigma\int\divf\mathbf{u}(\mathcal{P}\mathbf{u})_j^i(\mathcal{P}\mathbf{u})_i^j\mathrm{d}\mathbf{x}
+\frac{(2\mu+\lambda)\sigma}{8}\|\divf\mathbf{u}\|_{L^2}^2
+C(2\mu+\lambda)\sigma\int|\nabla\mathbf{u}|^2
|\nabla(\mathcal{Q}\mathbf{u})|^2\mathrm{d}\mathbf{x}\notag\\
&\leq
-(2\mu+\lambda)\sigma\int\divf\mathbf{u}(\mathcal{P}\mathbf{u})_j^i(\mathcal{P}\mathbf{u})_i^j\mathrm{d}\mathbf{x}
+\frac{(2\mu+\lambda)\sigma}{8}\|\divf\mathbf{u}\|_{L^2}^2+C\sigma\|\nabla\mathbf{u}\|_{L^4}^4
+\frac{C\sigma}{(2\mu+\lambda)^2}\|F+P-\bar{P}\|_{L^4}^4\notag\\
&\leq -\sigma\int F_t(\mathcal{P}\mathbf{u})_j^i(\mathcal{P}\mathbf{u})_i^j\mathrm{d}\mathbf{x}
+C\sigma
\big(1+\|\sqrt{\rho}\dot{\mathbf{u}}\|_{L^2}^2+\|\nabla\mathbf{u}\|_{L^2}^2\big)
\big(\|\sqrt{\rho}\dot{\mathbf{u}}\|_{L^2}^2+\|\nabla\mathbf{u}\|_{L^2}^2\big)\notag\\
&\quad
+\frac{(2\mu+\lambda)\sigma}{8}\|\divf\mathbf{u}\|_{L^2}^2
+\frac{C}{(2\mu+\lambda)^2}\|P-\bar{P}\|_{L^4}^4,
\end{align}
where in the last inequality we relied on \eqref{3.21} and Gagliardo--Nirenberg inequality \eqref{GN-3D-2}, with
\begin{align*}
&-(2\mu+\lambda)\sigma\int\big(\divv\mathbf{u}_t+\mathbf{u}\cdot\nabla\divv\mathbf{u}\big)
(\mathcal{P}\mathbf{u})_j^i(\mathcal{P}\mathbf{u})_i^j\mathrm{d}\mathbf{x}\notag\\
&=-\sigma\int\big[F_t+(P-\bar{P})_t+\mathbf{u}\cdot\nabla(P-\bar{P})+\mathbf{u}\cdot\nabla F\big](\mathcal{P}\mathbf{u})_j^i(\mathcal{P}\mathbf{u})_i^j\mathrm{d}\mathbf{x}
\notag\\
&=
-\sigma\int F_t(\mathcal{P}\mathbf{u})_j^i(\mathcal{P}\mathbf{u})_i^j\mathrm{d}\mathbf{x}
+\sigma\int\big[\gamma P\divv\mathbf{u}-(\gamma-1)\overline{P\divv\mathbf{u}}-\mathbf{u}\cdot\nabla F
\big](\mathcal{P}\mathbf{u})_j^i(\mathcal{P}\mathbf{u})_i^j\mathrm{d}\mathbf{x}
\notag\\
&\leq
-\sigma\int F_t(\mathcal{P}\mathbf{u})_j^i(\mathcal{P}\mathbf{u})_i^j\mathrm{d}\mathbf{x}
+C\sigma\|\nabla(\mathcal{P}\mathbf{u})\|_{L^4}^2\|\nabla\mathbf{u}\|_{L^2}
+C\sigma\|\nabla\mathbf{u}\|_{L^2}^3
+C\sigma\|\mathbf{u}\|_{L^\infty}\|\nabla(\mathcal{P}\mathbf{u})\|_{L^4}^2\|\nabla F\|_{L^2}
\notag\\
&\leq
-\sigma\int F_t(\mathcal{P}\mathbf{u})_j^i(\mathcal{P}\mathbf{u})_i^j\mathrm{d}\mathbf{x}
+C\sigma\|\nabla\mathbf{u}\|_{L^4}^2\|\nabla\mathbf{u}\|_{L^2}
+C\sigma\|\nabla\mathbf{u}\|_{L^2}^3
+C\sigma
\|\mathbf{u}\|_{L^6}^\frac13\|\nabla\mathbf{u}\|_{L^4}^\frac23
\|\nabla(\mathcal{P}\mathbf{u})\|_{L^4}^2\|\nabla F\|_{L^2}
\notag\\
&\leq
-\sigma\int F_t(\mathcal{P}\mathbf{u})_j^i(\mathcal{P}\mathbf{u})_i^j\mathrm{d}\mathbf{x}
+C\sigma\big(1+\|\sqrt{\rho}\dot{\mathbf{u}}\|_{L^2}^2+\|\nabla\mathbf{u}\|_{L^2}^2\big)
\big(\|\sqrt{\rho}\dot{\mathbf{u}}\|_{L^2}^2+\|\nabla\mathbf{u}\|_{L^2}^2\big)
+\frac{C}{(2\mu+\lambda)^4}\|P-\bar{P}\|_{L^4}^4.
\end{align*}
For the term $-\sigma\int F_t(\mathcal{P}\mathbf{u})_j^i(\mathcal{P}\mathbf{u})_i^j\mathrm{d}\mathbf{x}$, we observe that
\begin{equation*}
  \|\nabla\big(\mathcal{P}(\mathbf{u}\cdot\nabla\mathbf{u})\big)\|_{L^2}
  =\|\nabla\big(\mathcal{P}(\mathbf{u}\times\boldsymbol\omega)\big)\|_{L^2}
  \leq C\|\nabla\mathbf{u}\|_{L^4}^2
+C\|\mathbf{u}\|_{L^\infty}\|\nabla^2(\mathcal{P}\mathbf{u})\|_{L^2},
\end{equation*}
owing to
\begin{equation*}
  \mathbf{u}\cdot\nabla\mathbf{u}=\nabla\Big(\frac{|\mathbf{u}|^2}{2}\Big)-\mathbf{u}\times\boldsymbol\omega.
\end{equation*}
By Gagliardo--Nirenberg inequality \eqref{GN-3D-2} and Lemma \ref{l2.9}, we estimate this term as follows:
\begin{align}\label{3.31}
&-\sigma\int F_t
(\mathcal{P}\mathbf{u})_{j}^{i}(\mathcal{P}\mathbf{u})_{i}^{j}\mathrm{d}\mathbf{x}
\notag\\
&=
-\frac{\mathrm{d}}{\mathrm{d}t}\int\sigma F
(\mathcal{P}\mathbf{u})_{j}^{i}(\mathcal{P}\mathbf{u})_{i}^{j}\mathrm{d}\mathbf{x}
+\sigma'\int F
(\mathcal{P}\mathbf{u})_{j}^{i}(\mathcal{P}\mathbf{u})_{i}^{j}\mathrm{d}\mathbf{x}
+\sigma\int F
(\mathcal{P}\mathbf{u})_{jt}^{i}(\mathcal{P}\mathbf{u})_{i}^{j}\mathrm{d}\mathbf{x}
+\sigma\int F
(\mathcal{P}\mathbf{u})_{j}^{i}(\mathcal{P}\mathbf{u})_{it}^{j}\mathrm{d}\mathbf{x}
\notag\\
&=
-\frac{\mathrm{d}}{\mathrm{d}t}\int\sigma F
(\mathcal{P}\mathbf{u})_{j}^{i}(\mathcal{P}\mathbf{u})_{i}^{j}\mathrm{d}\mathbf{x}
+\sigma'\int F
(\mathcal{P}\mathbf{u})_{j}^{i}(\mathcal{P}\mathbf{u})_{i}^{j}\mathrm{d}\mathbf{x}
+\sigma\int F
\big(\mathcal{P}(\dot{\mathbf{u}}-\mathbf{u}\cdot\nabla\mathbf{u})\big)_{j}^{i}(\mathcal{P}\mathbf{u})_{i}^{j}\mathrm{d}\mathbf{x}
\notag\\
&\quad
+\sigma\int F
(\mathcal{P}\mathbf{u})_{j}^{i}\big(\mathcal{P}(\dot{\mathbf{u}}-\mathbf{u}\cdot\nabla\mathbf{u})\big)_{i}^{j}\mathrm{d}\mathbf{x}
\notag\\
&\leq
-\frac{\mathrm{d}}{\mathrm{d}t}\int\sigma F
(\mathcal{P}\mathbf{u})_{j}^{i}(\mathcal{P}\mathbf{u})_{i}^{j}\mathrm{d}\mathbf{x}
+C\|F\|_{L^6}\|\nabla(\mathcal{P}\mathbf{u})\|_{L^2}\|\nabla(\mathcal{P}\mathbf{u})\|_{L^3}
\notag\\
&\quad
+C\sigma\| F\|_{L^6}\|\nabla(\mathcal{P}\mathbf{u})\|_{L^3}
\big(\|\nabla\dot{\mathbf{u}}\|_{L^2}+\|\nabla\mathbf{u}\|_{L^4}^2
+\|\mathbf{u}\|_{L^\infty}\|\nabla^2(\mathcal{P}\mathbf{u})\|_{L^2}\big)
\notag\\
&\leq
-\frac{\mathrm{d}}{\mathrm{d}t}\int\sigma F
(\mathcal{P}\mathbf{u})_{j}^{i}(\mathcal{P}\mathbf{u})_{i}^{j}\mathrm{d}\mathbf{x}
+C\|\nabla F\|_{L^2}\|\nabla(\mathcal{P}\mathbf{u})\|_{L^2}
\|\nabla(\mathcal{P}\mathbf{u})\|_{L^3}
\notag\\
&\quad
+C\sigma\|\nabla F\|_{L^2}\|\nabla(\mathcal{P}\mathbf{u})\|_{L^3}
\big(\|\nabla\dot{\mathbf{u}}\|_{L^2}+\|\nabla\mathbf{u}\|_{L^4}^2
+\|\mathbf{u}\|_{L^6}^\frac13\|\nabla\mathbf{u}\|_{L^4}^\frac23
\|\nabla^2(\mathcal{P}\mathbf{u})\|_{L^2}\big)
\notag\\
&\leq
-\frac{\mathrm{d}}{\mathrm{d}t}\int\sigma F
(\mathcal{P}\mathbf{u})_{j}^{i}(\mathcal{P}\mathbf{u})_{i}^{j}\mathrm{d}\mathbf{x}
+C\big(1+\|\nabla\mathbf{u}\|_{L^{2}}^{2}\big)
\big(1+\sigma\|\sqrt{\rho}\dot{\mathbf{u}}\|_{L^{2}}^{2}+
\|\nabla\mathbf{u}\|_{L^{2}}^{2}\big)
\big(\|\sqrt{\rho}\dot{\mathbf{u}}\|_{L^{2}}^{2}+\|\nabla\mathbf{u}\|_{L^{2}}^{2}\big)
\notag\\
&\quad
+\frac{\delta}{2}\|\nabla \dot{\mathbf{u}}\|_{L^2}^2
+\frac{C}{(2\mu+\lambda)^4}\|P-\bar{P}\|_{L^4}^4.
\end{align}
Plugging \eqref{3.31} into \eqref{3.30} yields
\begin{align}\label{3.32}
\mathcal{I}_{31}
&\leq
-\frac{\mathrm{d}}{\mathrm{d}t}\int\sigma F
(\mathcal{P}\mathbf{u})_{j}^{i}(\mathcal{P}\mathbf{u})_{i}^{j}\mathrm{d}\mathbf{x}
+C\big(1+\|\nabla\mathbf{u}\|_{L^{2}}^{2}\big)
\big(1+\sigma\|\sqrt{\rho}\dot{\mathbf{u}}\|_{L^{2}}^{2}+
\|\nabla\mathbf{u}\|_{L^{2}}^{2}\big)
\big(\|\sqrt{\rho}\dot{\mathbf{u}}\|_{L^{2}}^{2}+\|\nabla\mathbf{u}\|_{L^{2}}^{2}\big)
\notag\\
&\quad
+\frac{\delta}{2}\|\nabla \dot{\mathbf{u}}\|_{L^2}^2
+\frac{(2\mu+\lambda)\sigma}{8}\|\divf\mathbf{u}\|_{L^2}^2
+\frac{C}{(2\mu+\lambda)^2}\|P-\bar{P}\|_{L^4}^4.
\end{align}

Arguing as before and integrating by parts, it follows from Lemma \ref{l2.9} that
\begin{align}\label{3.33}
\mathcal{I}_{32}
&=
-\frac{\sigma}{2\mu+\lambda}\int\divf\mathbf{u}(F+P-\bar{P})^2\mathrm{d}\mathbf{x}
\notag\\
&\leq
\frac{C\sigma}{2\mu+\lambda}\|\divf\mathbf{u}\|_{L^2}
\big(\|F\|_{L^4}^2+\|P-\bar{P}\|_{L^4}^2\big)
\notag\\
&\leq
\frac{(2\mu+\lambda)\sigma}{8}\|\divf\mathbf{u}\|_{L^2}^2
+C\sigma\big(1+\|\sqrt{\rho}\dot{\mathbf{u}}\|_{L^2}^2\big)
\big(\|\sqrt{\rho}\dot{\mathbf{u}}\|_{L^2}^2+\|\nabla\mathbf{u}\|_{L^2}^2\big)
+\frac{C}{(2\mu+\lambda)^3}\|P-\bar{P}\|_{L^4}^4,
\\
\mathcal{I}_{33}
&=
-(2\mu+\lambda)\sigma\int\partial_j\mathbf{u}\cdot\nabla u^j(\divv\mathbf{u})^2\mathrm{d}\mathbf{x}
\notag\\
&\leq C(2\mu+\lambda)\sigma\int
\big(|\nabla(\mathcal{P}\mathbf{u})|^2+|\nabla(\mathcal{Q}\mathbf{u})|^2\big)
(\divv\mathbf{u})^2\mathrm{d}\mathbf{x}
\notag\\
&\leq C(2\mu+\lambda)^2\sigma\|\divv\mathbf{u}\|_{L^4}^4
+C\sigma\|\nabla(\mathcal{P}\mathbf{u})\|_{L^4}^4
\notag\\
&\leq
\frac{C\sigma}{(2\mu+\lambda)^2}\|F+P-\bar{P}\|_{L^4}^4
+C\sigma\big(\|\sqrt{\rho}\dot{\mathbf{u}}\|_{L^{2}}^{3}\|\nabla\mathbf{u}\|_{L^{2}}
+\|\nabla\mathbf{u}\|_{L^{2}}^{4}\big)
\notag\\
&\leq
C\sigma \big(1+\|\sqrt{\rho}\dot{\mathbf{u}}\|_{L^2}^2+\|\nabla\mathbf{u}\|_{L^2}^2\big)
\big(\|\sqrt{\rho}\dot{\mathbf{u}}\|_{L^2}^2+\|\nabla\mathbf{u}\|_{L^2}^2\big)
+\frac{C}{(2\mu+\lambda)^2}\|P-\bar{P}\|_{L^4}^4,\label{3.34}
\\
\mathcal{I}_{34}
&=
\sigma\int\partial_j\dot{\mathbf{u}}\cdot\nabla u^j
(F+P-\bar{P})\mathrm{d}\mathbf{x}
\notag\\
&\leq C\sigma\|\nabla\dot{\mathbf{u}}\|_{L^2}
\big(\|\nabla\mathbf{u}\|_{L^4}\|F\|_{L^4}
+\|\nabla\mathbf{u}\|_{L^2}\|P-\bar{P}\|_{L^\infty}\big)
\notag\\
&\leq
\frac{\delta}{2}\|\nabla \dot{\mathbf{u}}\|_{L^2}^2
+C\sigma\big(1+\|\sqrt{\rho}\dot{\mathbf{u}}\|_{L^2}^2+\|\nabla\mathbf{u}\|_{L^2}^2\big)
\big(\|\sqrt{\rho}\dot{\mathbf{u}}\|_{L^2}^2+\|\nabla\mathbf{u}\|_{L^2}^2\big)
+\frac{C}{(2\mu+\lambda)^2}\|P-\bar{P}\|_{L^4}^4.\label{3.35}
\end{align}
Substituting \eqref{3.32}--\eqref{3.35} into \eqref{3.29}, one has that
\begin{align}\label{3.36}
 \mathcal{I}_{3}
 &\leq
 -\frac{\mathrm{d}}{\mathrm{d}t}\int\sigma F
(\mathcal{P}\mathbf{u})_{j}^{i}(\mathcal{P}\mathbf{u})_{i}^{j}\mathrm{d}\mathbf{x}
+C\big(1+\|\nabla\mathbf{u}\|_{L^{2}}^{2}\big)
\big(1+\sigma\|\sqrt{\rho}\dot{\mathbf{u}}\|_{L^{2}}^{2}+
\|\nabla\mathbf{u}\|_{L^{2}}^{2}\big)
\big(\|\sqrt{\rho}\dot{\mathbf{u}}\|_{L^{2}}^{2}+\|\nabla\mathbf{u}\|_{L^{2}}^{2}\big)
 \notag\\
 &\quad
-\frac{3(2\mu+\lambda)\sigma}{4}\|\divf\mathbf{u}\|_{L^2}^2
+\delta\|\nabla \dot{\mathbf{u}}\|_{L^2}^2
+\frac{C}{(2\mu+\lambda)^2}\|P-\bar{P}\|_{L^4}^4
+\sum_{i=3}^5\mathcal{B}_{i}.
\end{align}

It remains to estimate the boundary terms $\mathcal{B}_{i}\ (i=1,2,3,4,5)$
appearing in \eqref{3.27} and \eqref{3.29}.

Recall from \eqref{2.22} and \eqref{2.23} that
\begin{equation*}
\mathbf u^\perp\times\mathbf n
=\mathbf u, \ \
\dot{\mathbf u}\cdot\mathbf n=\mathbf u\cdot\nabla\mathbf u\cdot\mathbf n
=-\mathbf u\cdot\nabla\mathbf n\cdot\mathbf u
\quad \text{on }\partial\Omega.
\end{equation*}
We first deal with $\mathcal{B}_1+\mathcal{B}_3+\mathcal{B}_4$.
A straightforward computation shows that
\begin{align}\label{3.37}
&\sigma\int_{\partial\Omega}
\big[-P_{t}-\divv(P\mathbf{u})+(2\mu+\lambda)\divv\mathbf{u}_t+
(2\mu+\lambda)\divv(\mathbf{u}\divv\mathbf{u})\big]
\dot{\mathbf{u}}\cdot\mathbf{n}\mathrm{d}S
\notag\\
&=
\sigma\int_{\partial\Omega}(F-\bar{P})_t\dot{\mathbf{u}}\cdot\mathbf{n}\mathrm{d}S
+\sigma\int_{\partial\Omega}\divv[\mathbf{u}(F-\bar{P})]\dot{\mathbf{u}}\cdot\mathbf{n}\mathrm{d}S
\notag\\
&=
-\sigma\int_{\partial\Omega}(F-\bar{P})_t\mathbf{u}\cdot\nabla\mathbf{n}\cdot \mathbf{u}\mathrm{d}S
+\sigma\int_{\partial\Omega}\divv[\mathbf{u}(F-\bar{P})]\dot{\mathbf{u}}\cdot\mathbf{n}\mathrm{d}S
\notag\\
&=
-\frac{\mathrm{d}}{\mathrm{d}t}\int_{\partial\Omega}\sigma (F-\bar{P})(\mathbf{u}\cdot\nabla\mathbf{n}\cdot \mathbf{u})\mathrm{d}S
+\sigma'\int_{\partial\Omega}(F-\bar{P})(\mathbf{u}\cdot\nabla \mathbf{n}\cdot \mathbf{u})\mathrm{d}S
+\sigma\int_{\partial\Omega}(F-\bar{P})(\dot{\mathbf{u}}\cdot\nabla \mathbf{n}\cdot \mathbf{u})\mathrm{d}S
\notag\\
&\quad
+\sigma\int_{\partial\Omega}(F-\bar{P})(\mathbf{u}\cdot\nabla \mathbf{n}\cdot \dot{\mathbf{u}})\mathrm{d}S
-\sigma\int_{\partial\Omega}(F-\bar{P})\big((\mathbf{u}\cdot\nabla)\mathbf{u}\cdot\nabla \mathbf{n}\cdot \mathbf{u}\big)\mathrm{d}S
\notag\\
&\quad
-\sigma\int_{\partial\Omega}(F-\bar{P})\big(\mathbf{u}\cdot\nabla \mathbf{n}\cdot (\mathbf{u}\cdot\nabla)\mathbf{u}\big)\mathrm{d}S
+\sigma\int_{\partial\Omega}\divv[\mathbf{u}(F-\bar{P})]\dot{\mathbf{u}}\cdot\mathbf{n}\mathrm{d}S
\notag\\
&\leq
-\frac{\mathrm{d}}{\mathrm{d}t}\int_{\partial\Omega}\sigma (F-\bar{P})(\mathbf{u}\cdot\nabla\mathbf{n}\cdot \mathbf{u})\mathrm{d}S
+C\big(1+\|\nabla\mathbf{u}\|_{L^{2}}^{2}\big)\big(1+\sigma\|\sqrt{\rho}\dot{\mathbf{u}}\|_{L^{2}}^{2}
+\|\nabla\mathbf{u}\|_{L^{2}}^{2}\big)\big(\|\sqrt{\rho}\dot{\mathbf{u}}\|_{L^{2}}^{2}
+\|\nabla\mathbf{u}\|_{L^{2}}^{2}\big)
\notag\\&\quad
+\delta\|\nabla\dot{\mathbf{u}}\|_{L^2}^2
+\frac{C}{(2\mu+\lambda)^2}\|P-\bar{P}\|_{L^4}^4
+\sigma\int_{\partial\Omega}\divv[\mathbf{u}(F-\bar{P})]\dot{\mathbf{u}}\cdot\mathbf{n}\mathrm{d}S,
\end{align}
where the last inequality follows from the trace theorem and the estimates below:
\begin{align*}
&\left|\int_{\partial\Omega}(F-\bar{P})(\mathbf{u}\cdot\nabla \mathbf{n}\cdot \mathbf{u})\mathrm{d}S\right|
\leq
C(1+\|F\|_{H^1})\|\mathbf{u}\|_{H^1}^2
\leq
 C(1+\|\nabla F\|_{L^2})\|\nabla\mathbf{u}\|_{L^2}^2,
\\
&\left|\int_{\partial\Omega}(F-\bar{P})
\big(\dot{\mathbf{u}}\cdot\nabla \mathbf{n}\cdot \mathbf{u}
+\mathbf{u}\cdot\nabla \mathbf{n}\cdot \dot{\mathbf{u}}\big)\mathrm{d}S\right|
  \leq
 C(1+\|\nabla F\|_{L^2})\|\nabla\mathbf{u}\|_{L^2}
 \big(\|\dot{\mathbf{u}}\|_{L^6}+\|\nabla\dot{\mathbf{u}}\|_{L^2}\big),
\\
&\left|\int_{\partial\Omega}(F-\bar{P})\big((\mathbf{u}\cdot\nabla)\mathbf{u}\cdot\nabla \mathbf{n}\cdot \mathbf{u}\big)\mathrm{d}S\right|
=
\left|\int_{\partial\Omega}(F-\bar{P})(\mathbf u^\perp\times\mathbf n)
\cdot\nabla u^i\partial_i \mathbf{n}\cdot \mathbf{u}\mathrm{d}S\right|
\notag\\
&=
\left|\int_{\partial\Omega}(F-\bar{P})\mathbf n\cdot(\nabla u^i\times\mathbf u^\perp)
\partial_i \mathbf{n}\cdot \mathbf{u}\mathrm{d}S\right|
=
\left|\int\divv\big[(F-\bar{P})(\nabla u^i\times\mathbf u^\perp)
\partial_i \mathbf{n}\cdot \mathbf{u}\big]\mathrm{d}\mathbf{x}\right|
\notag\\
&=\left|\int\nabla\big((F-\bar{P})\partial_i \mathbf{n}\cdot \mathbf{u}\big)
\cdot(\nabla u^i\times\mathbf u^\perp)\mathrm{d}\mathbf{x}
-\int\nabla u^i\cdot(\nabla\times\mathbf u^\perp)\partial_i \mathbf{n}\cdot \mathbf{u}
(F-\bar{P})\mathrm{d}\mathbf{x}\right|
\notag\\
&\leq
C\|\nabla F\|_{L^2}\|\mathbf{u}\|_{L^4}\|\nabla\mathbf{u}\|_{L^4}
\|\mathbf{u}\|_{L^\infty}
+C\|\nabla\mathbf{u}\|_{L^4}^2\|\mathbf{u}\|_{L^3}\|F\|_{L^6}
\leq
C\big(1+\|\nabla F\|_{L^2}\big)\big(\|\nabla\mathbf{u}\|_{L^4}^3+\|\nabla F\|_{L^2}\big),
\notag\\
&\left|\int_{\partial\Omega}(F-\bar{P})\big(\mathbf{u}\cdot\nabla \mathbf{n}\cdot (\mathbf{u}\cdot\nabla)\mathbf{u}\big)\mathrm{d}S\right|
\leq
C\big(1+\|\nabla F\|_{L^2}\big)\big(\|\nabla\mathbf{u}\|_{L^4}^3+\|\nabla F\|_{L^2}\big).
\end{align*}
Moreover, the last term in \eqref{3.37} can be handled together with the remaining boundary terms as follows:
\begin{align}\label{3.38}
  &\mathcal{B}_2+\mathcal{B}_5+\sigma\int_{\partial\Omega}\divv[\mathbf{u}(F-\bar{P})]\dot{\mathbf{u}}\cdot\mathbf{n}\mathrm{d}S
  \notag\\
  &=
  \sigma\int_{\partial\Omega}[P-(2\mu+\lambda)\divv\mathbf{u}]\dot{\mathbf{u}}\cdot\nabla\mathbf{u}\cdot\mathbf{n}\mathrm{d}S
  +\sigma\int_{\partial\Omega}\divv[\mathbf{u}(F-\bar{P})]\dot{\mathbf{u}}\cdot\mathbf{n}\mathrm{d}S
  \notag\\
  &=
  -\sigma\int_{\partial\Omega}(F-\bar{P})\dot{\mathbf{u}}\cdot\nabla\mathbf{u}\cdot\mathbf{n}\mathrm{d}S
  +\sigma\int_{\partial\Omega}\divv[\mathbf{u}(F-\bar{P})]\dot{\mathbf{u}}\cdot\mathbf{n}\mathrm{d}S
  \notag\\
  &=
  \sigma\int\divv\big[-(F-\bar{P})\dot{\mathbf{u}}\cdot\nabla\mathbf{u}
  +\dot{\mathbf{u}}\divv\big(\mathbf{u}(F-\bar{P})\big)\big]
  \mathrm{d}\mathbf{x}
  \notag\\
  &\leq
  C\sigma(1+\|\nabla F\|_{L^2}+\| F\|_{L^4})\|\nabla\mathbf{u}\|_{L^4}(\|\nabla\dot{\mathbf{u}}\|_{L^2}+\|\dot{\mathbf{u}}\|_{L^4})
  +C\sigma\|\nabla F\|_{L^2}\|\nabla\dot{\mathbf{u}}\|_{L^2}\|\mathbf{u}\|_{L^\infty}\notag\\
  &\quad+\sigma\int(F-\bar{P})\big(\dot{\mathbf{u}}\cdot\nabla\divv\mathbf{u}-\dot{\mathbf{u}}\cdot\nabla\divv\mathbf{u}\big)
  \mathrm{d}\mathbf{x}
  -\sigma\int\big(\mathbf{u}\cdot\nabla\dot{\mathbf{u}}\cdot\nabla F
  +(\divv\mathbf{u})\dot{\mathbf{u}}\cdot\nabla F\big)\mathrm{d}\mathbf{x}
  \notag\\
  &\leq
  C\sigma(1+\|\nabla F\|_{L^2})
  \|\nabla\mathbf{u}\|_{L^4}(\|\nabla\dot{\mathbf{u}}\|_{L^2}+\|\dot{\mathbf{u}}\|_{L^6})
  \notag\\
  &\leq
  \delta\|\nabla\dot{\mathbf{u}}\|_{L^2}^2
  +C\sigma\big(1+\|\sqrt{\rho}\dot{\mathbf{u}}\|_{L^{2}}^{2}+\|\nabla\mathbf{u}\|_{L^{2}}^{2}\big)
  \big(\|\sqrt{\rho}\dot{\mathbf{u}}\|_{L^{2}}^{2}+\|\nabla\mathbf{u}\|_{L^{2}}^{2}\big)
 +\frac{C}{(2\mu+\lambda)^4}\|P-\bar{P}\|_{L^4}^4.
 \end{align}

In view of \eqref{2.21}, we can choose the constant $\delta$ such that
\begin{equation*}
10\delta\|\nabla\dot{\mathbf{u}}\|_{L^2}^2\leq
C(\Omega)\delta\big(\|\divv\dot{\mathbf{u}}\|_{L^2}^2+\|\curl\dot{\mathbf{u}}\|_{L^2}^2+\|\nabla\mathbf{u}\|_{L^4}^4\big)
\leq \mu\big(\|\divf\mathbf{u}\|_{L^2}^2+\|\curl\dot{\mathbf{u}}\|_{L^2}^2\big)   +C\|\nabla\mathbf{u}\|_{L^4}^4.
\end{equation*}
Thus, substituting \eqref{3.27}, \eqref{3.28}, and \eqref{3.36}--\eqref{3.38} into \eqref{3.26}, we deduce from Lemma \ref{l3.2} that
\begin{align}\label{3.39}
&\frac{\mathrm{d}}{\mathrm{d}t}\left(\frac{1}{2}\int\sigma\rho|\dot{\mathbf{u}}|^2\mathrm{d}\mathbf{x}
+\int\sigma F(\mathcal{P}\mathbf{u})_j^i(\mathcal{P}\mathbf{u})_i^j\mathrm{d}\mathbf{x}
+\int_{\partial\Omega}\sigma (F-\bar{P})(\mathbf{u}\cdot\nabla\mathbf{n}\cdot \mathbf{u})\mathrm{d}S\right)\notag\\ &\quad
+\frac{(2\mu+\lambda)\sigma}{2}\|\divf\mathbf{u}\|_{L^2}^2
+\frac{\mu\sigma}{2}\|\curl\dot{\mathbf{u}}\|_{L^2}^2
\notag\\
&\leq
M^{\exp\big\{\frac{9}{4}K_2(1+C_0)^2\big\}}
\big(1+\sigma\|\sqrt{\rho}\dot{\mathbf{u}}\|_{L^{2}}^{2}
+\|\nabla\mathbf{u}\|_{L^{2}}^{2}\big)
\big(\|\sqrt{\rho}\dot{\mathbf{u}}\|_{L^{2}}^{2}+
\|\nabla\mathbf{u}\|_{L^{2}}^{2}\big)
+\frac{C}{(2\mu+\lambda)^2}\|P-\bar{P}\|_{L^4}^4,
\end{align}
where we note that
\begin{align*}
\left|\int\sigma F(\mathcal{P}\mathbf{u})_j^i(\mathcal{P}\mathbf{u})_i^j\mathrm{d}\mathbf{x}\right|
+\left|\int_{\partial\Omega}\sigma (F-\bar{P})(\mathbf{u}\cdot\nabla\mathbf{n}\cdot \mathbf{u})\mathrm{d}S\right|
&\leq C\sigma\|\nabla F\|_{L^2}\|\nabla\mathbf{u}\|_{L^2}
\big(\|\nabla(\mathcal{P}\mathbf{u})\|_{L^3}+\|\nabla\mathbf{u}\|_{L^2}\big)
\notag\\
 &\leq
 \frac{\sigma}{4}\|\sqrt{\rho}\dot{\mathbf{u}}\|_{L^{2}}^2+
 C\|\nabla\mathbf{u}\|_{L^{2}}^2\big(1+\|\nabla\mathbf{u}\|_{L^{2}}^2\big)^2.
\end{align*}

Recalling the definition of $\mathcal{E}_1(t)$ in \eqref{3.14}, we introduce the following auxiliary functional
\begin{equation*}
  \mathcal{E}_2(t)\triangleq\int\frac{\sigma}{2}\rho|\dot{\mathbf{u}}|^2\mathrm{d}\mathbf{x}
  +\int \sigma F(\mathcal{P}\mathbf{u})_j^i(\mathcal{P}\mathbf{u})_i^j\mathrm{d}\mathbf{x}
  +\int_{\partial\Omega}\sigma (F-\bar{P})(\mathbf{u}\cdot\nabla\mathbf{n}\cdot \mathbf{u})\mathrm{d}S+
  M^{\exp\big\{\frac{5}{2}K_2(1+C_0)^2\big\}}\mathcal{E}_1(t),
\end{equation*}
which satisfies
\begin{equation*}
\mathcal{E}_2(t)
\sim
\sigma\|\sqrt{\rho}\dot{\mathbf{u}}\|_{L^{2}}^2
+(2\mu+\lambda)\|\divv \mathbf{u}\|_{L^2}^2
+\mu\|\curl \mathbf{u}\|_{L^2}^2
\end{equation*}
provided that $\lambda$ satisfies \eqref{1.18} with $K\geq 3K_2$.
Setting
\begin{equation*}
  y(t)\triangleq 2+\mathcal{E}_2(t),\quad
  h(t)\triangleq
  M^{\exp\big\{\frac{11}{4}K_2(1+C_0)^2\big\}}
  \bigg[\|\sqrt{\rho}\dot{\mathbf{u}}\|_{L^{2}}^2+\|\nabla\mathbf{u}\|_{L^{2}}^2
  +\frac{1}{(2\mu+\lambda)^2}\|P-\bar{P}\|_{L^4}^4\bigg],
\end{equation*}
we arrive at
\begin{align*}
y'(t)\leq h(t)y(t).
\end{align*}

Therefore, applying Gronwall's inequality and Lemmas $\ref{l3.1}$--$\ref{l3.3}$, we derive that
\begin{equation}\label{3.40}
  \sup_{0\leq t\leq T}
  \big(\sigma\|\sqrt{\rho}\dot{\mathbf{u}}\|_{L^{2}}^2\big)\leq
  \exp\left\{M^{\exp\{3K_2(1+C_0)^2\}} \right\}.
\end{equation}
Integrating \eqref{3.39} over $(0,T)$, it follows from \eqref{3.40} that
\begin{align*}
\int_0^T
\big[(2\mu+\lambda)\sigma\|\divf\mathbf{u}\|_{L^2}^2
+\mu\sigma\|\curl\dot{\mathbf{u}}\|_{L^2}^2\big]
\mathrm{d}t&\leq
C(1+C_0)M^{\exp\{3K_2(1+C_0)^2\}}
\exp\left\{M^{\exp\{3K_2(1+C_0)^2\}} \right\}
\notag\\
&\leq
\exp\left\{M^{\exp\big\{\frac{7}{2}K_2(1+C_0)^2\big\}} \right\}.
\end{align*}
This along with \eqref{3.40} implies the desired \eqref{3.25}.
\end{proof}

Finally, inspired by \cite{DE97,Hoff02}, we establish a uniform upper bound for the density, independent of time and of any higher-order smoothness of the initial data.

\begin{lemma}\label{l3.5}
Under the assumption \eqref{3.1}, we have
\begin{align*}
0\leq\rho(\mathbf{x},t)\leq\frac{7}{4}\hat{\rho}
\quad \text{a.e. in } \Omega \times [0,T]
\end{align*}
provided that $\lambda$ satisfies \eqref{1.18} with $K\geq 5K_2$.
\end{lemma}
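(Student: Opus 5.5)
Along a particle path $\mathbf{X}(t;\mathbf{x})$, $\dot{\mathbf{X}}=\mathbf{u}(\mathbf{X},t)$, the mass equation gives
\begin{equation*}
(2\mu+\lambda)\frac{\mathrm{d}}{\mathrm{d}t}\ln\rho(\mathbf{X},t)=-(2\mu+\lambda)\divv\mathbf{u}=-F-(P-\bar P).
\end{equation*}
Since the slip condition $\mathbf{u}\cdot\mathbf{n}=0$ keeps trajectories inside $\overline{\Omega}$, the path is well defined. The pressure term $-(P(\rho)-\bar P)$ is a damping term: it is negative whenever $\rho$ is large. We may assume $\rho>0$ for the approximate solutions, and the bound will be uniform in the lower bound of $\rho$.

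Suppose $\rho$ exceeds $\tfrac32\hat\rho$ on some time interval $[t_0,t_1]$. On that interval $P(\rho)-\bar P\ge a(\tfrac32\hat\rho)^\gamma-\bar P$. We have $\bar P\le C$, and we would like this gap to be positive and at least $c\hat\rho^\gamma$. This should hold because $\bar\rho\le\hat\rho$ and $\bar P$ is controlled through the energy. If that control fails, we would use instead $\|P-\bar P\|_{L^2}\le CC_0^{1/2}$. Alternatively, following \cite{DE97}, we would simply bound $\int_{t_0}^{t_1}|P-\bar P|$ and work with the increment of $\ln\rho$. So the task reduces to showing
\begin{equation*}
\frac{1}{2\mu+\lambda}\int_{t_0}^{t_1}\|F\|_{L^\infty}\,\mathrm{d}t\le N_0+\frac{1}{2\mu+\lambda}\,C\,(t_1-t_0)
\end{equation*}
with $N_0$ small, for instance $N_0\le\ln(7/6)$. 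Then the crossing argument of the Zlotnik type (Lemma-style, as in \cite{Hoff02}) gives $\rho\le\tfrac74\hat\rho$. Because the factor $(2\mu+\lambda)^{-1}$ multiplies the pressure too, we must be careful. The linear growth term is $C(t_1-t_0)/(2\mu+\lambda)$, while the damping is $c\hat\rho^\gamma(t_1-t_0)/(2\mu+\lambda)$. So we need $\|F\|_{L^\infty}$ to split as a small-in-time-average part plus a part with long-time integrable decay.

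We would estimate $\|F\|_{L^\infty}$ using \eqref{GN-3D-2} with $q=2$ and $s=4$, together with \eqref{E1} and \eqref{E3}. This gives
\begin{equation*}
\|F\|_{L^\infty}\le C\|F\|_{L^2}^{1/7}\|\nabla F\|_{L^4}^{6/7}+C\|F\|_{L^2},\qquad \|\nabla F\|_{L^4}\le C\|\rho\dot{\mathbf{u}}\|_{L^4}\le C\|\sqrt\rho\dot{\mathbf{u}}\|_{L^2}^{1/4}\|\dot{\mathbf{u}}\|_{L^6}^{3/4}.
\end{equation*}
We then control $\|\dot{\mathbf{u}}\|_{L^6}$ by \eqref{2.20}--\eqref{2.21}, using $\|\divv\dot{\mathbf{u}}\|_{L^2}\le\|\divf\mathbf{u}\|_{L^2}+\|\nabla\mathbf{u}\|_{L^4}^2$. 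Here $\|F\|_{L^2}\le C\|\sqrt\rho\dot{\mathbf{u}}\|_{L^2}$, up to a constant, via Poincaré. We use $\bar F=0$ in the form $\|F\|_{L^2}\le C\|\nabla F\|_{L^2}$.

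We treat the two time regimes separately. For $t\in[0,1]$ we use the weights: Hölder in time with Lemma \ref{l3.2} and Lemma \ref{l3.4} gives $\int_0^1\|F\|_{L^\infty}\mathrm{d}t\le\int_0^1\sigma^{-\alpha}(\cdots)\,\mathrm{d}t$, with an integrable singular weight, bounded by $\exp\{M^{\exp\{4K_2(1+C_0)^2\}}\}$. Dividing by $2\mu+\lambda$ and using \eqref{1.18} with $K\ge5K_2$ makes this smaller than $\ln(7/6)$. The extra margin $K_2$ in the exponent absorbs the double exponential. For $t\ge1$, $\sigma=1$, so $\int_1^T\|F\|_{L^\infty}^{2}\,\mathrm{d}t$ (or an $L^{r}$ power) is bounded by the same huge constant. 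Young's inequality then gives $\frac{1}{2\mu+\lambda}\int_{t_0}^{t_1}\|F\|_{L^\infty}\le\frac{1}{2\mu+\lambda}\big(\varepsilon(t_1-t_0)+C_\varepsilon\int\|F\|_{L^\infty}^2\big)$. The first part is absorbed by the damping. The second is small once $\lambda$ is large.

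The main obstacle is the factor $(2\mu+\lambda)$ in front of $\|\divf\mathbf{u}\|_{L^2}^2$ in \eqref{3.25}. It is the only source of control on $\divv\dot{\mathbf{u}}$, so we get $\int\sigma\|\divf\mathbf{u}\|^2_{L^2}\lesssim(2\mu+\lambda)^{-1}\cdot(\text{huge})$, which is favorable. The term $\|\nabla\mathbf{u}\|_{L^4}^4$ is handled by \eqref{E5}, and its $(2\mu+\lambda)^{-4}\|P-\bar P\|_{L^4}^4$ part by \eqref{3.1}. A second difficulty is the lower bound $\rho\ge0$: with vacuum, $\ln\rho$ is infinite. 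This is resolved by working with positive approximate densities, or directly with $\rho$ rather than $\ln\rho$ via $\frac{\mathrm{d}}{\mathrm{d}t}\rho=-\rho\divv\mathbf{u}$, and only on the set where $\rho\ge\tfrac32\hat\rho$, where $\ln\rho$ is harmless. The lower bound $\rho\ge0$ itself follows from the Lagrangian representation $\rho(\mathbf{X},t)=\rho_0\exp\{-\int_0^t\divv\mathbf{u}\}$.
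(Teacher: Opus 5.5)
Your route is the paper's: a Lagrangian crossing argument between $\frac32\hat\rho$ and $\frac74\hat\rho$. The short-time part controls $\int_0^1\|F\|_{L^\infty}\,\mathrm{d}t$ through the $\sigma$-weights of Lemma \ref{l3.4} and \eqref{2.20}--\eqref{2.21}. For $t\ge1$ you bound $\int_1^T\|F\|_{L^\infty}^2\,\mathrm{d}t$ and absorb the Young remainder into the pressure damping. Two differences are inessential: you use $\ln\rho$ throughout, where the paper switches to $\rho^2$ in Case 2, and you apply \eqref{GN-3D-2} with $(q,s)=(2,4)$, where the paper interpolates $F$ between $L^6$ and $W^{1,6}$. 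Your bookkeeping, $\|F\|_{L^\infty}\lesssim\|\sqrt\rho\dot{\mathbf u}\|_{L^2}^{5/14}\|\dot{\mathbf u}\|_{L^6}^{9/14}$ plus lower-order terms, closes in both time regimes.

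The gap is the step you leave open. For $t\ge1$ the interval $[t_0,t_1]$ may be arbitrarily long. Absorbing $\varepsilon(t_1-t_0)/(2\mu+\lambda)$ therefore needs $P(\rho)-\bar P\ge c>0$ on $\{\rho\ge\frac32\hat\rho\}$, uniformly in time, and none of your justifications provides it:
\begin{itemize}
\item Jensen runs the wrong way ($\bar P\ge a\bar\rho^\gamma$). So $\bar\rho\le\hat\rho$ and \eqref{3.1} only give $\bar P\le a(2\hat\rho)^{\gamma-1}\bar\rho\le 2^{\gamma-1}a\hat\rho^\gamma$, which lies below $a(\frac32\hat\rho)^\gamma$ only when $\gamma<\ln 2/\ln(4/3)\approx 2.41$.
\item The energy bound and $\|P-\bar P\|_{L^2}\le CC_0^{1/2}$ carry a large $C_0$ and say nothing about $P$ along a single trajectory.
\item Bounding $\frac{1}{2\mu+\lambda}\int_{t_0}^{t_1}|P-\bar P|\,\mathrm{d}t$ by $C(t_1-t_0)/(2\mu+\lambda)$ only helps on short intervals.
\end{itemize}
The paper's Case 2 makes the same Young absorption into $\frac{1}{2\mu+\lambda}\int\rho^2(P-\bar P)$ without justifying the sign. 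So you have located the delicate point, but you have not closed it.

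One way to close it is to control $\bar P$ directly. Integrate \eqref{3.20} over $\Omega$ using $\mathbf u\cdot\mathbf n=0$, then insert $(2\mu+\lambda)\divv\mathbf u=F+P-\bar P$ and $\bar F=0$:
\begin{equation*}
\frac{\mathrm{d}}{\mathrm{d}t}\bar P=-\frac{\gamma-1}{(2\mu+\lambda)|\Omega|}\int\big[(P-\bar P)^2+(P-\bar P)F\big]\mathrm{d}\mathbf{x}\le\frac{\gamma-1}{4(2\mu+\lambda)|\Omega|}\|F\|_{L^2}^2 .
\end{equation*}
Since $\|F\|_{L^2}\le C\|\nabla F\|_{L^2}\le C\|\sqrt\rho\dot{\mathbf u}\|_{L^2}$ and $\bar P(0)\le a\hat\rho^\gamma$, \eqref{3.7} gives
\begin{equation*}
\sup_t\bar P(t)\le a\hat\rho^\gamma+C(2\mu+\lambda)^{-1}M^{\exp\{2K_2(1+C_0)^2\}} .
\end{equation*}
Hence, under \eqref{1.18}, $P(\rho)-\bar P\ge\frac{a}{2}\big((\frac32)^\gamma-1\big)\hat\rho^\gamma$ whenever $\rho\ge\frac32\hat\rho$. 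Choosing your Young parameter $\varepsilon$ below this constant finishes the long-time case.
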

\begin{proof}
Let $\mathbf{y}\in\Omega$ and define the corresponding particle path
$\mathbf{x}(t,\mathbf{y})$ by
\begin{align*}
\begin{cases}
\dot{\mathbf{x}}(t,\mathbf{y})=\mathbf{u}(\mathbf{x}(t,\mathbf{y}),t),\\
\mathbf{x}(t_0,\mathbf{y})=\mathbf{y}.
\end{cases}
\end{align*}
Suppose that there exists $t_1\le T$ such that
$\rho(\mathbf{x}(t_1),t_1)=\frac{7}{4}\hat{\rho}$.
Let $t_1$ be the first such time and let $t_0<t_1$ be the last time
for which $\rho(\mathbf{x}(t_0),t_0)=\frac{3}{2}\hat{\rho}$.
Then
\begin{equation*}
\rho(\mathbf{x}(t),t)\in\left[\frac{3}{2}\hat{\rho},\frac{7}{4}\hat{\rho}\right]
\quad \text{for } t\in[t_0,t_1].
\end{equation*}
We consider two cases.

\textbf{Case 1:} $t_0<t_1\le1$.
From \eqref{a1}$_1$ and \eqref{1.8}, we derive
\begin{equation*}
(2\mu+\lambda)\frac{\mathrm{d}}{\mathrm{d}t}\ln\rho(\mathbf{x}(t),t)
+P(\rho(\mathbf{x}(t),t))-\bar{P}
=-F(\mathbf{x}(t),t),
\end{equation*}
where $\frac{\mathrm{d}\rho}{\mathrm{d}t}=\rho_t+\mathbf{u}\cdot\nabla\rho$.
Integrating this equality over $[t_0,t_1]$ and writing $\rho(\mathbf{x}(t),t)$ simply as $\rho(t)$ for convenience, we obtain
\begin{equation}\label{3.42}
\ln\rho(\tau)\Big|_{t_0}^{t_1}
+\frac{1}{2\mu+\lambda}\int_{t_0}^{t_1}
\big(P(\rho(\tau))-\bar{P}\big)\mathrm{d}\tau
=-\frac{1}{2\mu+\lambda}\int_{t_0}^{t_1}F(\mathbf{x}(\tau),\tau)\mathrm{d}\tau.
\end{equation}
Note that, by \eqref{E1} and \eqref{2.20},
\begin{align}\label{3.43}
  \|F\|_{L^\infty}&\leq
C\|F\|_{L^6}^{\frac12}\|\nabla F\|_{L^6}^{\frac12}
\leq
C\|\nabla F\|_{L^2}^{\frac12}\|\nabla F\|_{L^6}^{\frac12}
\notag\\
&\leq
C\|\sqrt{\rho}\dot{\mathbf{u}}\|_{L^2}^{\frac12}\|\dot{\mathbf{u}}\|_{L^6}^{\frac12}
\leq
C\|\sqrt{\rho}\dot{\mathbf{u}}\|_{L^2}^{\frac12}
\Big(\|\nabla\dot{\mathbf{u}}\|_{L^2}^{\frac12}+\|\nabla\mathbf{u}\|_{L^2}\Big).
\end{align}
It follows from \eqref{2.21} and Lemmas \ref{l3.1}--\ref{l3.4} that
\begin{align}\label{3.44}
\int_0^{\sigma(T)}\|F(\cdot,t)\|_{L^\infty}\mathrm{d}t
&\leq
C\int_0^{\sigma(T)}
\|\sqrt{\rho}\dot{\mathbf{u}}\|_{L^2}^{\frac12}
\Big(\|\nabla\dot{\mathbf{u}}\|_{L^2}^{\frac12}+\|\nabla\mathbf{u}\|_{L^2}\Big)
\mathrm{d}t
\notag\\
&\leq
C\int_0^{\sigma(T)}
\|\sqrt{\rho}\dot{\mathbf{u}}\|_{L^2}^{\frac12}
\Big(\|\divf\mathbf{u}\|_{L^2}^{\frac12}
+\|\curl\dot{\mathbf{u}}\|_{L^2}^{\frac12}+\|\nabla\mathbf{u}\|_{L^4}\Big)\mathrm{d}t
\notag\\
&\leq
C\sup_{0\leq t\leq \sigma(T)}\big(t\|\sqrt{\rho}{\dot{\mathbf{u}}}\|_{L^2}^2\big)^\frac14
\bigg(\int_0^{\sigma(T)}\big(t\|\divf\mathbf{u}\|_{L^2}^2
+t\|\curl\dot{\mathbf{u}}\|_{L^2}^2\big)
\mathrm{d}t\bigg)^{\frac14}\bigg(\int_0^{\sigma(T)}
t^{-\frac23}\mathrm{d}t\bigg)^{\frac34}
\notag\\
&\quad
+C\int_0^{\sigma(T)}\big(1+\|\sqrt{\rho}{\dot{\mathbf{u}}}\|_{L^2}^2+\|\nabla\mathbf{u}\|_{L^2}^{2}\big)\mathrm{d}t
\notag\\
&\leq
\exp\left\{M^{\exp\big\{\frac{17}{4}K_2(1+C_0)^2\big\}} \right\}.
\end{align}
Since $\rho(t)\in [\frac{3}{2}\hat{\rho},\frac{7}{4}\hat{\rho}]\subset [\hat{\rho},2\hat{\rho}]$ and $P(\rho)$ is increasing on $[0,\infty)$, substituting \eqref{3.44} into \eqref{3.42} yields
\begin{equation*}
    \ln\left(\frac{7}{4}\hat{\rho}\right)-\ln\left(\frac{3}{2}\hat{\rho}\right)
    +\frac{1}{2\mu+\lambda}\int_{t_0}^{t_1}
    \left(P(\rho(\tau))-\bar{P}\right)\mathrm{d}\tau
    \leq\frac{1}{2\mu+\lambda}
    \exp\left\{M^{\exp\big\{\frac{9}{2}K_2(1+C_0)^2\big\}} \right\}.
\end{equation*}
This is impossible provided that $\lambda$ satisfies \eqref{1.18} with $K\geq 5K_2$. Therefore, there exists no time $t_1$ such that
$\rho(\mathbf{x}(t_1),t_1)=\frac{7}{4}\hat{\rho}$.
Since $\mathbf{y}\in\Omega$ is arbitrary, we conclude that
$\rho<\frac{7}{4}\hat{\rho}\ \textit{a.e. } \text{in } \Omega\times[0,T]$.

\textbf{Case 2:} $t_1>1$. By \eqref{a1}$_1$ and \eqref{1.8},
\begin{equation*}
    \frac{\mathrm{d}}{\mathrm{d}t}\rho(t)+\frac{1}{2\mu+\lambda}\rho(t)(P(\rho(t))-\bar{P})=-\frac{1}{2\mu+\lambda}\rho(t)F(\mathbf{x}(t),t).
\end{equation*}
Multiplying this identity by $\rho(t)$, we deduce
\begin{equation}\label{3.45}
    \frac{1}{2}\frac{\mathrm{d}}{\mathrm{d}t}\left|\rho(t)\right|^2
    +\frac{1}{2\mu+\lambda}|\rho(t)|^2(P(\rho(t))-\bar{P})
    =-\frac{1}{2\mu+\lambda}|\rho(t)|^2F(\mathbf{x}(t),t).
\end{equation}
If $\rho(t)\in [\frac{3}{2}\hat{\rho},\frac{7}{4}\hat{\rho}]$, integrating \eqref{3.45} from $t_0$ to $t_1$ and using Young's inequality, we get from \eqref{3.43} and \eqref{3.44} that
\begin{align}\label{3.46}
\hat{\rho}^{2}
&\leq\frac{C}{2\mu+\lambda}\int_{1}^{T}\|F(\cdot,t)\|_{L^\infty}^2\mathrm{d}t
+\frac{C}{2\mu+\lambda}\int_{0}^{1}\|F(\cdot,t)\|_{L^\infty}\mathrm{d}t
\notag\\
&\leq
\frac{C}{2\mu+\lambda}\bigg[
\sup_{1\leq t\leq T}\big(\|\nabla\mathbf{u}\|_{L^2}^{2}\big)
\int_{1}^{T}\|\nabla\mathbf{u}\|_{L^2}^{2}\mathrm{d}t
+\int_1^{T}\big(\|\sqrt{\rho}{\dot{\mathbf{u}}}\|_{L^2}^2
+\sigma\|\divf\mathbf{u}\|_{L^2}^2
+\sigma\|\curl\dot{\mathbf{u}}\|_{L^2}^2\big)\mathrm{d}t
\bigg]
\notag\\
&\quad
+\frac{1}{2\mu+\lambda}
    \exp\left\{M^{\exp\big\{\frac{17}{4}K_2(1+C_0)^2\big\}} \right\}
\notag\\
&\leq
\frac{1}{2\mu+\lambda}
    \exp\left\{M^{\exp\big\{\frac{19}{4}K_2(1+C_0)^2\big\}} \right\}.
\end{align}
This again leads to a contradiction if $\lambda$ satisfies \eqref{1.18} with $K\geq 5K_2$. Consequently, no time $t_1$ exists such that
$\rho(\mathbf{x}(t_1),t_1)=\frac{7}{4}\hat{\rho}$.
Since $\mathbf{y}\in\Omega$ is arbitrary, we conclude that
$\rho<\frac{7}{4}\hat{\rho}\ \textit{a.e. } \text{in } \Omega\times[0,T]$.
\end{proof}

Now we are ready to prove Proposition \ref{p3.1}.

\begin{proof}[Proof of Proposition \ref{p3.1}.]
The proposition follows directly from Lemma \ref{l3.3} and Lemma \ref{l3.5} provided that $\lambda$ satisfies \eqref{1.18} with $K\geq 5K_2$.
\end{proof}

To complete the proof of global existence, it remains to verify the blow-up quantity in \eqref{2.1}.
We specify the admissible pair $(s,p)=(4,6)$ in \eqref{2.1}.

\begin{lemma}\label{l3.6}
Assume that the hypotheses of Theorem \ref{t1.1} and \eqref{3.1} are in force. Then
\begin{equation*}
\|\divv\mathbf{u}\|_{L^1(0,T;L^\infty(\Omega))}
+\|\sqrt{\rho}\mathbf{u}\|_{L^4(0,T;L^6(\Omega))}
\le C(T)
\end{equation*}
provided that $\lambda$ satisfies \eqref{1.18} with $K\ge 5K_2$.
\end{lemma}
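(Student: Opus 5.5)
The two terms are handled separately. Throughout, $T$ is fixed, $C(T)$ may depend on $T$ (and on $\lambda$, which is fixed), and the uniform bounds of Lemmas \ref{l3.1}--\ref{l3.5} are available under \eqref{3.1}.

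\emph{The term $\sqrt{\rho}\mathbf{u}$.} Since $\rho\le 2\hat\rho$, we have $\|\sqrt{\rho}\mathbf{u}\|_{L^6}\le C\|\mathbf{u}\|_{L^6}$. By Sobolev's embedding, Poincar\'e's inequality (Lemma \ref{l2.3}, using $\mathbf{u}\cdot\mathbf{n}=0$) and \eqref{HO3},
\begin{equation*}
\|\mathbf{u}\|_{L^6}\le C\|\nabla\mathbf{u}\|_{L^2}\le C\big(\|\divv\mathbf{u}\|_{L^2}+\|\curl\mathbf{u}\|_{L^2}\big).
\end{equation*}
The right-hand side is bounded uniformly in time by \eqref{3.7}. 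Hence $\|\sqrt{\rho}\mathbf{u}\|_{L^4(0,T;L^6)}\le CT^{1/4}M^{\exp\{2K_2(1+C_0)^2\}}$.

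\emph{The term $\divv\mathbf{u}$.} Write $\divv\mathbf{u}=(2\mu+\lambda)^{-1}(F+P-\bar P)$. The pressure part is bounded by $\|P-\bar P\|_{L^\infty}\le C(\hat\rho)$, so it contributes at most $CT$. For $F$, we split the time interval into $[0,\sigma(T)]$ and $[\sigma(T),T]$ (the second is empty if $T\le1$).

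On $[0,\sigma(T)]$, estimate \eqref{3.44} already gives $\int_0^{\sigma(T)}\|F\|_{L^\infty}\,\mathrm{d}t\le C$.

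On $[1,T]$, where $\sigma=1$, use \eqref{3.43} together with \eqref{2.21}:
\begin{equation*}
\|F\|_{L^\infty}\le C\|\sqrt{\rho}\dot{\mathbf{u}}\|_{L^2}^{1/2}\Big(\|\divf\mathbf{u}\|_{L^2}^{1/2}+\|\curl\dot{\mathbf{u}}\|_{L^2}^{1/2}+\|\nabla\mathbf{u}\|_{L^4}+\|\nabla\mathbf{u}\|_{L^2}\Big).
\end{equation*}
Here $\|\divv\dot{\mathbf{u}}\|_{L^2}$ is controlled by $\|\divf\mathbf{u}\|_{L^2}+\|\nabla\mathbf{u}\|_{L^4}^2$, since $\divv\dot{\mathbf{u}}=\divf\mathbf{u}+\partial_j\mathbf{u}\cdot\nabla u^j$. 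We then apply H\"older's inequality in time:

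\begin{itemize}
\item $\sup_t\|\sqrt{\rho}\dot{\mathbf{u}}\|_{L^2}$ is bounded on $[1,T]$ by Lemma \ref{l3.4};
\item $\int_1^T\big(\|\divf\mathbf{u}\|_{L^2}^2+\|\curl\dot{\mathbf{u}}\|_{L^2}^2\big)\mathrm{d}t$ is bounded by \eqref{3.25};
\item $\|\nabla\mathbf{u}\|_{L^4}$ is bounded by \eqref{E5} in terms of $\|\sqrt{\rho}\dot{\mathbf{u}}\|_{L^2}$, $\|\nabla\mathbf{u}\|_{L^2}$ and $\|P-\bar P\|_{L^4}$, all of which are bounded uniformly on $[1,T]$.
\end{itemize}

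Consequently $\int_1^T\|F\|_{L^\infty}\,\mathrm{d}t\le C(T^{3/4}+T)$. Combining the two pieces gives $\|\divv\mathbf{u}\|_{L^1(0,T;L^\infty)}\le C(T)$.

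\emph{Main obstacle.} The delicate step is the short-time singular weight: near $t=0$ only the $\sigma$-weighted bounds on $\dot{\mathbf{u}}$ are available. This has already been resolved in \eqref{3.44} through the integrable factor $t^{-2/3}$, so the argument reduces to citing it. The only other care needed is to track the nonlinear term $\|\nabla\mathbf{u}\|_{L^4}^2$ coming from \eqref{2.21} in the expression for $\divv\dot{\mathbf{u}}$.
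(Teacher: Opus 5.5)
Your proposal is correct and follows essentially the same route as the paper. You bound $\sqrt{\rho}\mathbf{u}$ through $\|\nabla\mathbf{u}\|_{L^2}$ using the density bound, and you write $\divv\mathbf{u}=(2\mu+\lambda)^{-1}(F+P-\bar P)$ with $\|F\|_{L^\infty}$ controlled via \eqref{3.43}: the piece on $[0,\sigma(T)]$ by \eqref{3.44}, and the piece on $[1,T]$ as in \eqref{3.46}. The differences are cosmetic: the paper bounds $\int_0^T\|\nabla\mathbf{u}\|_{L^2}^4\,\mathrm{d}t$ by $\sup_t\|\nabla\mathbf{u}\|_{L^2}^2\int_0^T\|\nabla\mathbf{u}\|_{L^2}^2\,\mathrm{d}t$ rather than $T$ times the supremum, and you apply H\"older in time directly to $\int_1^T\|F\|_{L^\infty}\,\mathrm{d}t$ rather than going through $\|F\|_{L^\infty}^2$; both are valid here.
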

\begin{proof}
By the uniform upper bound of $\rho$ from Lemma \ref{l3.5}, it is enough to control
\begin{equation*}
\|\divv \mathbf{u}\|_{L^1(0,T;L^\infty)}
\quad\text{and}\quad
\|\nabla \mathbf{u}\|_{L^4(0,T;L^2)}.
\end{equation*}
Proceeding as in \eqref{3.44} and \eqref{3.46}, and using Lemmas \ref{l3.1}--\ref{l3.5}, we obtain that
\begin{align*}
&\int_{0}^{T}\|\divv\mathbf{u}(\cdot,t)\|_{L^\infty}\mathrm{d}t
+\int_{0}^{T}\|\nabla \mathbf{u}(\cdot,t)\|_{L^2}^4\mathrm{d}t
\notag\\
&\leq
 \frac{1}{2\mu+\lambda}\int_{0}^{T}\big(\|F(\cdot,t)\|_{L^\infty}
 +\|(P-\bar{P})(\cdot,t)\|_{L^\infty}\big)\mathrm{d}t
+\sup_{0\leq t\leq T}\big(\|\nabla \mathbf{u}(\cdot,t)\|_{L^2}^2\big)
\int_{0}^{T}\|\nabla \mathbf{u}(\cdot,t)\|_{L^2}^2\mathrm{d}t
\notag\\
&\leq C(T),
\end{align*}
as desired.
\end{proof}

Therefore, blow-up cannot occur in finite time, and hence the local strong solution extends globally.

\section{Proof of Theorem 1.1}\label{sec4}

In this section we use the \textit{a priori} estimates established in Section $\ref{sec3}$ to finish the proof of Theorem \ref{t1.1}.

\textbf{Step 1. Construction of smooth approximate solutions}.
Let $(\rho_0, \mathbf{u}_0)$ be initial data as described in the theorem.
For $\varepsilon>0$, let $j_\varepsilon=j_\varepsilon(\mathbf{x})$ be the mollification operator introduced above.
Moreover, let $j_\varepsilon^{\,r}=j_\varepsilon^{\,r}(r)$ be the standard one-dimensional mollifier in the $r$-variable, and let
$j_\varepsilon^{\,z}=j_\varepsilon^{\,z}(z)$ be the standard one-dimensional mollifier on $\mathbb R$, viewed as a periodic kernel on $\mathbb T_L$.
For a function $f=f(r,z)$ on $\Gamma=(1,2)\times\mathbb T_L$, we define the mixed mollification operator
\begin{equation*}
\mathcal J_\varepsilon f
\triangleq j_\varepsilon^{\,r} *_r\Big( j_\varepsilon^{\,z}*_{\mathbb T_L} f\Big),
\end{equation*}
where $*_{\mathbb T_L}$ denotes periodic convolution in $z\in\mathbb T_L$, and $*_r$ denotes convolution in $r$ after extending the integrand by $0$
outside $(1,2)$.

We first regularize the density on the meridional domain
$\Gamma$ by setting
\begin{equation*}
\tilde{\rho}_0^\varepsilon\triangleq
(\mathcal J_\varepsilon \rho_0)\big|_{\Gamma}+\varepsilon.
\end{equation*}
We then extend this function axisymmetrically to $\Omega$.
For $\mathbf{x}=(r,\theta,z)\in\Omega$, we define the approximate density by
\begin{equation*}
\rho_0^\varepsilon(\mathbf{x})\triangleq\tilde{\rho}_0^\varepsilon(r,z).
\end{equation*}
Next, to match the boundary condition, we define $\mathbf u_0^\varepsilon$
as the unique smooth solution of the elliptic problem
\begin{equation*}
\begin{cases}
\Delta \mathbf{u}_0^\varepsilon
= \Delta (J_\varepsilon * \mathbf{u}_0), & \mathbf{x}\in\Omega,\\
\mathbf{u}_0^\varepsilon\cdot\mathbf{n}=0,
\quad
\curl \mathbf{u}_0^\varepsilon\times\mathbf{n}=\mathbf{0},
& \mathbf{x}\in\partial\Omega .
\end{cases}
\end{equation*}

Then the approximate initial data
$(\rho_0^\varepsilon(\mathbf{x}),\mathbf u_0^\varepsilon(\mathbf{x}))$ satisfy
\begin{equation*}
\rho_0^\varepsilon\in W^{1,\tilde{q}}(\Omega), \quad
\inf_{\mathbf{x}\in\Omega}\rho_0^\varepsilon(\mathbf{x})\ge\varepsilon,
\quad
\mathbf{u}_0^\varepsilon\in H^2(\Omega)\cap H_\omega^1(\Omega).
\end{equation*}
For $\varepsilon>0$ sufficiently small, Lemma \ref{l2.1} yields a unique local strong solution
$(\rho^\varepsilon,\mathbf u^\varepsilon)$ to \eqref{a1} and \eqref{a3}--\eqref{a4}
with initial data $(\rho_0^\varepsilon,\mathbf u_0^\varepsilon)$.
By Lemma \ref{l3.6}, the possibility of finite-time blow-up is ruled out,
and hence the solution extends globally in time.
Moreover, $(\rho^\varepsilon,\mathbf u^\varepsilon)$ satisfies Lemmas \ref{l3.1}--\ref{l3.5}.

\textbf{Step 2. Compactness arguments and passage to the limit}.
To this end, we derive a time regularity estimate for the approximate solutions away from the initial time.
Fix $\mathbf{x}\in\overline{\Omega}$ and let $B_R$ denote the ball
of radius $R$ centered at $\mathbf{x}$. Then for any $t\ge \tau>0$,
it follows from Lemmas \ref{l2.9}, \ref{l3.1}--\ref{l3.5}, and Sobolev's inequality that
\begin{align*}
\big\langle\mathbf{u}^\varepsilon(\cdot,t)\big\rangle^{\frac14}_{\overline{\Omega}}
\leq C\big(1+\|\nabla\mathbf{u}^\varepsilon\|_{L^4}\big)
&\leq
C\big\|\sqrt{\rho^\varepsilon}\dot{\mathbf{u}}^\varepsilon\big\|_{L^2}
^{\frac34}\|\nabla\mathbf{u}^\varepsilon\|_{L^2}^{\frac14}
+C\|\nabla \mathbf{u}^\varepsilon\|_{L^2}+C
\notag\\
&\quad
+\frac{C}{2\mu+\lambda}\Big(
\big\|\sqrt{\rho^\varepsilon}\dot{\mathbf{u}}^\varepsilon\big\|_{L^2}^{\frac34}
\big\|P(\rho^\varepsilon)-\overline{P(\rho^\varepsilon)}\big\|_{L^2}^\frac{1}{4}
+\big\|P(\rho^\varepsilon)-\overline{P(\rho^\varepsilon)}\big\|_{L^4}\Big)
\notag\\
&\leq C(\tau),
\end{align*}
and hence
\begin{align*}
\left|\mathbf{u}^\varepsilon(\mathbf{x},t)-\frac{1}{|B_R\cap\Omega|}
\int_{B_R\cap\Omega}\mathbf{u}^\varepsilon(\mathbf{y},t)\mathrm{d}\mathbf{y}\right|
&=\left|\frac{1}{|B_R\cap\Omega|}
\int_{B_R\cap\Omega}\big(\mathbf{u}^\varepsilon(\mathbf{x},t)
-\mathbf{u}^\varepsilon(\mathbf{y},t)\big)\mathrm{d}\mathbf{y}\right|
\notag\\
&\leq
\frac{C(\tau)}{|B_R\cap\Omega|}\int_{B_R\cap\Omega}
|\mathbf{x}-\mathbf{y}|^{\frac14}\mathrm{d}\mathbf{y}
\notag\\
&\leq C(\tau)R^{\frac14}.
\end{align*}
For any $0<\tau\leq t_1<t_2<\infty$, we deduce that
\begin{align*}
\big|\mathbf{u}^\varepsilon(\mathbf{x},t_2)-\mathbf{u}^\varepsilon(\mathbf{x},t_1)\big|
&\leq
\frac{1}{|B_R\cap\Omega|}\int_{t_{1}}^{t_{2}}\int_{B_R\cap\Omega}
\big|\mathbf{u}_{t}^{\varepsilon}(\mathbf{y},t)\big|\mathrm{d}\mathbf{y}\mathrm{d}t
+C(\tau)R^{\frac14}
\notag\\
&\leq
CR^{-\frac32}|t_{2}-t_{1}|^{\frac{1}{2}}\left(\int_{t_{1}}^{t_{2}}
\int\big|\mathbf{u}_{t}^{\varepsilon}(\mathbf{y},t)\big|^{2}
\mathrm{d}\mathbf{y}\mathrm{d}t\right)^{\frac{1}{2}}
+C(\tau)R^{\frac14}
\notag\\
&\leq
CR^{-\frac32}|t_{2}-t_{1}|^{\frac{1}{2}}
\left(\int_{t_{1}}^{t_{2}}
\int\Big(|\dot{\mathbf{u}}^{\varepsilon}|^{2}
+|\mathbf{u}^{\varepsilon}|^{2}|\nabla\mathbf{u}^{\varepsilon}|^{2}\Big)
\mathrm{d}\mathbf{y}\mathrm{d}t\right)^{\frac{1}{2}}
+C(\tau)R^{\frac14}
\notag\\
&\leq
C(\tau)R^{-\frac{3}{2}}|t_{2}-t_{1}|^{\frac{1}{2}}+C(\tau)R^{\frac14},
\end{align*}
where we have used
\begin{align*}
\int_{t_1}^{t_2}\int|\mathbf{u}^\varepsilon|^2
|\nabla\mathbf{u}^\varepsilon|^2\mathrm{d}\mathbf{y}\mathrm{d}t
&\leq
C\sup_{t_1\leq t\leq t_2}\Big(\|\mathbf{u}^\varepsilon\|_{L^\infty}^2\Big)
\int_{t_1}^{t_2}
\int|\nabla\mathbf{u}^\varepsilon|^2\mathrm{d}\mathbf{y}\mathrm{d}t
\notag\\
&\leq
 C\sup_{t_1\leq t\leq t_2}\Big(\|\mathbf{u}^\varepsilon\|_{L^6}^{\frac{2}{3}}
\|\nabla\mathbf{u}^\varepsilon\|_{L^4}^{\frac{4}{3}}\Big)
\int_{t_1}^{t_2}\int|\nabla\mathbf{u}^\varepsilon|^2\mathrm{d}\mathbf{y}\mathrm{d}t\leq C(\tau).
\end{align*}
Choosing $R=|t_2-t_1|^{\frac{2}{7}}$, we obtain
\begin{equation*}
\big|\mathbf{u}^\varepsilon(\mathbf{x},t_2)-\mathbf{u}^\varepsilon(\mathbf{x},t_1)\big|
\leq C(\tau)|t_{2}-t_{1}|^{\frac{1}{14}},
\quad 0<\tau\le t_1<t_2<\infty.
\end{equation*}
Consequently, $\{\mathbf{u}^\varepsilon\}$ is uniformly H\"{o}lder continuous in time away from $t=0$.

By the Ascoli--Arzel\`{a} theorem, there exists a subsequence
$\varepsilon_k\to0$ such that
\begin{equation}\label{4.1}
\mathbf{u}^{\varepsilon_k}\to \mathbf{u}
\quad\text{uniformly on compact subsets of } \Omega\times(0,\infty).
\end{equation}
Moreover, by the standard compactness arguments as in \cite{Hoff05,EF01,PL98},
we can extract a further subsequence $\varepsilon_{k'}\to0$ such that
\begin{equation}\label{4.2}
\rho^{\varepsilon_{k'}}\to\rho
\quad\text{strongly in }L^p(\Omega),
\quad\text{for any }p\in[1,\infty),\ t\ge0.
\end{equation}

Passing to the limit along this subsequence $\varepsilon_{k'}\to0$, and using
\eqref{4.1} and \eqref{4.2}, we conclude that the limit function
$(\rho,\mathbf{u})$ is a weak solution of the initial-boundary value problem
\eqref{a1}--\eqref{a4} in the sense of Definition \ref{d1.1}, and satisfies
\eqref{1.19}. Moreover, since the mollification and the limiting procedure
commute with rotations about the $x_3$-axis, the obtained solution
$(\rho,\mathbf{u})$ remains axisymmetric.\hfill$\large\Box$

\section{Proof of Theorem 1.2}\label{sec5}

This section is devoted to the incompressible limit of \eqref{a1}--\eqref{a4} as the bulk viscosity tends to infinity.

\medskip
Let $\{(\rho^\lambda,\mathbf u^\lambda)\}$ be the family of axisymmetric solutions
to \eqref{a1}--\eqref{a4} obtained in Theorem \ref{t1.1}.
In view of \eqref{1.19} and arguing as in \eqref{4.1}--\eqref{4.2},
there exists a subsequence $\{(\rho^{\lambda_k},\mathbf u^{\lambda_k})\}$ such that
\begin{align}
\mathbf u^{\lambda_k} &\to \mathbf v
\quad \text{uniformly on compact subsets of }
\Omega\times(0,\infty),
\notag\\
\rho^{\lambda_k} &\rightharpoonup \varrho
\quad \text{weakly in } L^p(\Omega),
\quad \text{for any } p\in[1,\infty),\ t\ge0, \label{5.1}
\\
\rho^{\lambda_k} &\rightharpoonup \varrho
\quad \text{weakly-* in } L^\infty(\Omega),
\ t\ge0, \notag\\
\divv \mathbf u^{\lambda_k} &\to 0
\quad \text{strongly in } L^2(\Omega\times(0,\infty)). \notag
\end{align}
Therefore, passing to the limit in the weak formulation of
\eqref{1.13}--\eqref{1.14}, we infer that the limit pair $(\varrho,\mathbf v)$
satisfies \eqref{1.21}--\eqref{1.23} in the sense of Definition \ref{d1.2}.
Since the approximation and limiting procedures preserve axisymmetry, the limit
$(\varrho,\mathbf v)$ is axisymmetric.  Moreover, $(\varrho,\mathbf v)$ satisfies
\begin{equation}\label{5.2}
0\leq\varrho({\bf x},t)\leq 2 \hat\rho
\quad \textit{a.e. } \text{in } \Omega\times[0,\infty),
\end{equation}
\vspace{-1.2em}
\begin{equation}\label{5.3}
\sup\limits_{t\ge 0}\big(\|\sqrt{\varrho}{\bf v}\|_{L^2}^2+\|{\bf v}\|_{H^1}^2
+\sigma\|\nabla^2{\bf v}\|_{L^2}^2\big)
+\int_0^\infty\|{\bf v}\|_{H^2}^2\mathrm{d}\tau
\le C(\mu,\hat\rho,M).
\end{equation}

We now prove \eqref{1.20} by a mollification argument.
From the mass equation $\eqref{a1}_1$, we obtain
\begin{equation}\label{z1}
\partial_t(\rho^{\epsilon,\lambda}-\rho_0^\epsilon)^2
+{\bf u}^{\epsilon,\lambda}\cdot\nabla(\rho^{\epsilon,\lambda}-\rho_0^\epsilon)^2
+2\rho^{\epsilon,\lambda}(\rho^{\epsilon,\lambda}-\rho_0^\epsilon)
\divv{\bf u}^{\epsilon,\lambda}=0.
\end{equation}
Integrating \eqref{z1} over $\Omega\times(0,t)$ gives
\begin{align*}
\big\|(\rho^{\epsilon,\lambda}-\rho_0^\epsilon)(\cdot,t)\big\|_{L^2}^2
=\int_0^t\int_\Omega(\rho^{\epsilon,\lambda}-\rho_0^\epsilon)^2
\divv{\bf u}^{\epsilon,\lambda}{\rm d}{\bf x}{\rm d}\tau
-2\int_0^t\int_\Omega
\rho^{\epsilon,\lambda}(\rho^{\epsilon,\lambda}-\rho_0^\epsilon)
\divv{\bf u}^{\epsilon,\lambda}{\rm d}{\bf x}{\rm d}\tau.
\end{align*}
Applying H\"older's inequality and the uniform bounds for
$\rho^{\epsilon,\lambda}$, we deduce that
\begin{align*}
\big\|(\rho^{\epsilon,\lambda}-\rho_0^\epsilon)(\cdot,t)\big\|_{L^2}^2
&\le
C\bigg(\int_0^t\|\rho^{\epsilon,\lambda}-\rho_0^\epsilon\|_{L^4}^4\,{\rm d}\tau\bigg)^{\frac12}
\bigg(\int_0^t\|\divv{\bf u}^{\epsilon,\lambda}\|_{L^2}^2\,{\rm d}\tau\bigg)^{\frac12}  \\
&\quad
+C\sup_{t\ge0}\|\rho^{\epsilon,\lambda}(\cdot,t)\|_{L^\infty}
\bigg(\int_0^t\|\rho^{\epsilon,\lambda}-\rho_0^\epsilon\|_{L^2}^2\,{\rm d}\tau\bigg)^{\frac12}
\bigg(\int_0^t\|\divv{\bf u}^{\epsilon,\lambda}\|_{L^2}^2\,{\rm d}\tau\bigg)^{\frac12} \\
&\le
C(t)(2\mu+\lambda)^{-\frac12}.
\end{align*}
This along with\eqref{4.2} yields that
\begin{equation*}
\|(\rho^\lambda-\rho_0)(\cdot,t)\|_{L^2}^2
=\lim_{\epsilon\to0}
\|(\rho^{\epsilon,\lambda}-\rho_0^{\epsilon})(\cdot,t)\|_{L^2}^2
\le C(t)(2\mu+\lambda)^{-\frac12}.
\end{equation*}
Consequently,
\begin{equation}\label{5.4}
\lim_{\lambda\to\infty}\|(\rho^\lambda-\rho_0)(\cdot,t)\|_{L^2}=0,
\ \ t\ge0.
\end{equation}

Next, applying the standard mollifier $j_\epsilon$ to \eqref{1.6}$_1$,
we obtain that, for any compact set $K\subset\subset\Omega$,
\begin{equation*}
\partial_t[\varrho]_\epsilon+{\bf v}\cdot\nabla[\varrho]_\epsilon
=\divv([\varrho]_\epsilon{\bf v})-\divv[\varrho{\bf v}]_\epsilon
\quad \textit{a.e. } \text{in } K\times(0,\infty).
\end{equation*}
Thus
\begin{equation}\label{z2}
\partial_t([\varrho]_\epsilon-\rho_0)^2
+{\bf v}\cdot\nabla([\varrho]_\epsilon-\rho_0)^2
=2([\varrho]_\epsilon-\rho_0)
\Big(\divv([\varrho-\rho_0]_\epsilon{\bf v})
-\divv[(\varrho-\rho_0){\bf v}]_\epsilon\Big)
\end{equation}
{\it a.e.} in $K\times(0,\infty)$. Integrating \eqref{z2} over $K\times(0,t)$ shows
\begin{align}\label{5.5}
&\Big|\big\|([\varrho]_\epsilon-\rho_0)(\cdot,t)\big\|_{L^2(K)}^2
-\big\|[\rho_0]_\epsilon-\rho_0\big\|_{L^2(K)}^2\Big|
\notag\\
& \le
C\int_0^t
\big\|\divv([\varrho-\rho_0]_\epsilon{\bf v})
-\divv[(\varrho-\rho_0){\bf v}]_\epsilon\big\|_{L^1(K)}\mathrm{d}\tau .
\end{align}

By the Friedrichs-type commutator estimate (Lemma \ref{l2.8}), one gets that, for any $T>0$,
\begin{equation*}
\big\|\divv([\varrho-\rho_0]_\epsilon{\bf v})
-\divv[(\varrho-\rho_0){\bf v}]_\epsilon\big\|_{L^1(K)}
\le
C(K)\|\varrho-\rho_0\|_{L^2(\Omega)}\|{\bf v}\|_{W^{1,2}(\Omega)}
\in L^1(0,T).
\end{equation*}
Hence, the Lebesgue dominated convergence theorem implies
\begin{equation}\label{5.6}
\lim_{\epsilon\to0}\int_0^t
\big\|\divv([\varrho-\rho_0]_\epsilon{\bf v})
-\divv[(\varrho-\rho_0){\bf v}]_\epsilon\big\|_{L^1(K)}\mathrm{d}\tau
=0.
\end{equation}
Combining \eqref{5.5} and \eqref{5.6} yields
\begin{align}\label{5.7}
\big\|(\varrho-\rho_0)(\cdot,t)\big\|_{L^2(K)}^2= \lim\limits_{\epsilon\rightarrow0}
\Big|\big\|([\varrho]_\epsilon-\rho_0)(\cdot,t)\big\|_{L^2(K)}^2
-\big\|[\rho_0]_\epsilon-\rho_0\big\|_{L^2(K)}^2\Big|=0.
\end{align}
Furthermore, it follows from \eqref{5.4} and \eqref{5.7} that
\begin{equation*}
\lim_{\lambda\to\infty}
\big\|(\rho^\lambda-\varrho)(\cdot,t)\big\|_{L^2(K)}=0,\quad
\text{for any compact set } K\subset\subset \Omega,\ t\ge0.
\end{equation*}
This together with \eqref{5.1} leads to \eqref{1.20}.

Therefore, $(\varrho,\mathbf v)$ is an axisymmetric weak solution to the
inhomogeneous incompressible Navier--Stokes system \eqref{1.6}--\eqref{1.7} in the sense of Definition \ref{d1.2}.\hfill$\large\Box$

\section*{Conflict of interest}
The authors declare that they have no conflict of interest.

\section*{Data availability}
No data was used for the research described in the article.

\end{document}